\documentclass[a4paper,11pt,reqno]{amsart}
\usepackage[english]{babel}
\usepackage[latin1]{inputenc}
\usepackage{cmbright}
\usepackage[T1]{fontenc}
\usepackage{amsmath}
\usepackage{amsfonts}
\usepackage{amssymb}
\usepackage{amsthm,epic,tikz}
\usepackage{mathrsfs}
\usepackage{enumitem}
\usepackage{graphicx}
\usepackage{footnote,color}
\usepackage{hyperref}
\hypersetup{colorlinks=true,linkcolor=blue,citecolor=red,filecolor=BrickRed,       urlcolor=darkgreen}

\renewcommand{\geq}{\geqslant}
\renewcommand{\leq}{\leqslant}
\newtheorem{thm}{Theorem}

\newtheorem{rem}[thm]{Remark}
\newtheorem{cor}[thm]{Corollary}

\newtheorem{lem}[thm]{Lemma}
\linespread{1.05}

\definecolor{redd}{rgb}{0.9,0,0}

\oddsidemargin=0pt \evensidemargin=0pt \textwidth=150mm
\textheight=22.30cm \voffset=00mm \pretolerance=3000

\let\o=\omega

\usepackage{color}
\definecolor{darkgreen}{rgb}{0,0.4,0}
\definecolor{MyDarkBlue}{rgb}{0,0.08,0.50}
\definecolor{BrickRed}{rgb}{0.65,0.08,0}

\title[A human proof of Gessel's lattice path conjecture]{A human proof of Gessel's lattice path conjecture}

\author{A.\ Bostan}
\thanks{A.\ Bostan: INRIA Saclay \^Ile-de-France, B\^atiment Alan Turing, 1 rue Honor\'e d'Estienne d'Orves, 91120 Palaiseau, France}

\author{I.\ Kurkova}
\thanks{I.\ Kurkova: Laboratoire de Probabilit\'es et Mod\`eles Al\'eatoires, Universit\'e Pierre et Marie Curie, 4 Place Jussieu, 75252 Paris Cedex 05, France}

\author{K.\ Raschel}
\thanks{K.\ Raschel: CNRS \& F\'ed\'eration de recherche Denis Poisson \& Laboratoire de Math\'ematiques et Physique Th\'eorique, Universit\'e de Tours, Parc de Grandmont, 37200 Tours, France}

\thanks{Emails: \url{Alin.Bostan@inria.fr}, \url{Irina.Kourkova@upmc.fr}, and \url{Kilian.Raschel@lmpt.univ-tours.fr}}

\keywords{Enumerative combinatorics; lattice paths; Gessel walks; generating functions; algebraic functions; elliptic functions}
\subjclass{Primary 05A15; Secondary 30F10, 30D05}

\date{\today}

\begin{document}

\begin{abstract}
Gessel walks are lattice paths confined to the quarter plane that start at the origin and consist of unit steps going either West, East, South-West or North-East.
In 2001, Ira Gessel conjectured a nice closed-form expression for the number of Gessel walks ending at the origin. In 2008, Kauers, Koutschan and Zeilberger gave a computer-aided proof of this conjecture. The same year, Bostan and Kauers showed, again using computer algebra tools, that the complete generating function of Gessel walks is algebraic.
In this article we propose the first ``human proofs'' of these results. They are derived from a new expression for the generating function of Gessel walks in terms of Weierstrass zeta functions.
\end{abstract}

\maketitle

\section{Introduction}
\label{sec:Introduction}

\subsection*{Main results}
Gessel walks are lattice paths confined to the quarter plane ${\bf N}^{2}=\{0,1,2,\ldots\} \times \{0,1,2,\ldots\}$, that start at the origin $(0,0)$ 
and move by unit steps in one of the following directions: West, East, South-West and North-East,
  see Figure~\ref{Steps_Gessel}.
Gessel excursions are those Gessel walks that return to the origin.
\unitlength=0.6cm
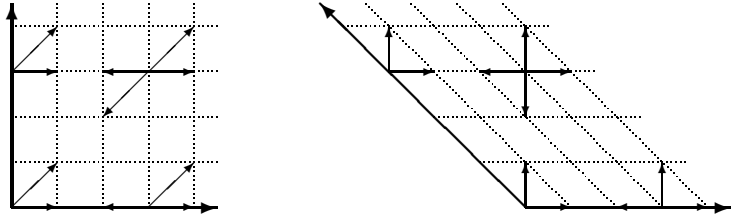
\begin{figure}[t]
  \begin{center}
\begin{tabular}{cccc}
    \hspace{-0.9cm}
        \begin{picture}(5,5.5)
    \thicklines
    \put(1,1){{\vector(1,0){4.5}}}
    \put(1,1){\vector(0,1){4.5}}
    \thinlines
    \put(4,4){\vector(1,1){1}}
    \put(4,4){\vector(-1,-1){1}}
    \put(4,4){\vector(1,0){1}}
    \put(4,4){\vector(-1,0){1}}
    \put(4,1){\vector(1,0){1}}
    \put(4,1){\vector(-1,0){1}}
    \put(4,1){\vector(1,1){1}}
    \put(1,4){\vector(1,0){1}}
    \put(1,4){\vector(1,1){1}}
    \put(1,1){\vector(1,0){1}}
    \put(1,1){\vector(1,1){1}}
    \linethickness{0.1mm}
    \put(1,2){\dottedline{0.1}(0,0)(4.5,0)}
    \put(1,3){\dottedline{0.1}(0,0)(4.5,0)}
    \put(1,4){\dottedline{0.1}(0,0)(4.5,0)}
    \put(1,5){\dottedline{0.1}(0,0)(4.5,0)}
    \put(2,1){\dottedline{0.1}(0,0)(0,4.5)}
    \put(3,1){\dottedline{0.1}(0,0)(0,4.5)}
    \put(4,1){\dottedline{0.1}(0,0)(0,4.5)}
    \put(5,1){\dottedline{0.1}(0,0)(0,4.5)}
    \end{picture}
    \hspace{35mm}
         \begin{picture}(5,5.5)
    \thicklines
    \put(1,1){{\vector(1,0){4.5}}}
    \put(1,1){\vector(-1,1){4.5}}
    \thinlines
    \put(1,4){\vector(0,1){1}}
    \put(1,4){\vector(0,-1){1}}
    \put(1,4){\vector(1,0){1}}
    \put(1,4){\vector(-1,0){1}}
    \put(4,1){\vector(-1,0){1}}
    \put(4,1){\vector(0,1){1}}
    \put(4,1){\vector(1,0){1}}
    \put(-2,4){\vector(1,0){1}}
    \put(-2,4){\vector(0,1){1}}
    \put(1,1){\vector(1,0){1}}
    \put(1,1){\vector(0,1){1}}
    \linethickness{0.1mm}
    \put(0,2){\dottedline{0.1}(0,0)(4.5,0)}
    \put(-1,3){\dottedline{0.1}(0,0)(4.5,0)}
    \put(-2,4){\dottedline{0.1}(0,0)(4.5,0)}
    \put(-3,5){\dottedline{0.1}(0,0)(4.5,0)}
    \put(2,1){\dottedline{0.1}(0,0)(-4.5,4.5)}
    \put(3,1){\dottedline{0.1}(0,0)(-4.5,4.5)}
    \put(4,1){\dottedline{0.1}(0,0)(-4.5,4.5)}
    \put(5,1){\dottedline{0.1}(0,0)(-4.5,4.5)}
    \end{picture}
    \end{tabular}
  \end{center}
  \vspace{-4mm}
\caption{On the left: allowed steps for Gessel walks. Note that on the boundary of ${\bf N}^{2}$, the steps that would take the walks out of ${\bf N}^{2}$ are discarded.  On the right: an equivalent formulation of Gessel walks as the simple walks evolving in the cone with opening $135^\circ$.}
\label{Steps_Gessel}
\end{figure}
For $(i,j) \in {\bf N}^{2}$ and $n\geq 0$, let $q(i,j;n)$ be the number of 
Gessel walks of length~$n$ ending at the point $(i,j)$. 
Gessel walks have been puzzling the combinatorics community since 2001, when Ira Gessel conjectured: 
\begin{enumerate}[label={\rm (\Alph{*})},ref={\rm (\Alph{*})}]
     \item\label{enumi:problem_A} For all $n\geq 0$, the following closed-form expression holds for the number of Gessel excursions of even length $2n$
\begin{equation}
\label{eq:Gessel's_conjecture}
     q(0,0;2n)=16^{n} \frac{(5/6)_{n} (1/2)_{n}}{(2)_{n} (5/3)_{n}},
\end{equation}
where $(a)_{n}=a(a+1)\cdots (a+n-1)$ denotes the Pochhammer symbol.
\end{enumerate}
Note that obviously there are no Gessel excursions of odd length, that is $q(0,0;2n+1)=0$ for all $n\geq 0$. 
In 2008, Kauers, Koutschan and Zeilberger~\cite{KKZ} provided a computer-aided proof of this conjecture. 

A second intriguing question was to decide whether or not:
\begin{enumerate}[label={\rm (\Alph{*})},ref={\rm (\Alph{*})}]
\setcounter{enumi}{1}
     \item\label{enumi:problem_B}
      Is the complete generating function (GF) of Gessel walks
     \begin{equation}
     \label{eq:def_generating_function}
          Q(x,y;z) = \sum_{i,j,n\geq 0}q(i,j;n) x^i y^j z^n
     \end{equation} D-finite\footnote{The function $Q(x,y;z)$ is called D-finite if the vector space over ${\bf Q}(x,y,z)$---the field of rational functions in the three variables $x,y,z$---spanned by the set of all partial derivatives of $Q(x,y;z)$ is finite-dimensional, see for instance~\cite{Lipshitz}.}, or even algebraic (i.e., root of a non-zero polynomial in ${\bf Q}(x,y,z)[T]$)?

\end{enumerate}
The answer to this question --namely, the (initially unexpected) algebraicity of $Q(x,y;z)$-- was finally obtained by Bostan and Kauers~\cite{BK2}, using computer algebra techniques.

\begin{quote}
    \em In summary, the only existing proofs for Problems~\ref{enumi:problem_A} and~\ref{enumi:problem_B} used heavy computer calculations in a crucial way. In this article we obtain a new explicit expression for $Q(x,y;z)$, from which we derive the first ``human proofs'' of \ref{enumi:problem_A} and \ref{enumi:problem_B}.
\end{quote}

\subsection*{Context of Gessel's conjecture} In 2001, the motivation for
considering Gessel's model of walks was twofold. First, by an obvious linear
transformation, Gessel's walk can be viewed as the simple walk (i.e., with
allowed steps to the West, East, South and North) constrained to lie in a cone
with angle $135^\circ$, see Figure \ref{Steps_Gessel}. It turns out that
before 2001, the simple walk was well studied in different cones.
P\'olya~\cite{Pol21} first considered the simple walk in the whole plane
(``drunkard's walk''), and remarked that the probability that a simple random
walk ever returns to the origin is equal to 1. This is a consequence of the
fact that there are exactly $\binom{2n}{n}^2$ simple excursions of length $2n$
in the plane ${\bf Z}^{2}$. There also exist formul\ae\ for simple excursions
of length $2n$ evolving in other regions of ${\bf Z}^{2}$:
$\binom{2n+1}{n}C_n$ for the half plane~${\bf Z} \times {\bf N}$, and $C_n
C_{n+1}$ for the quarter plane~${\bf N}^{2}$, where $C_n =
\frac{1}{n+1}\binom{2n}{n}$ is the Catalan number~\cite{Arques}.
Gouyou-Beauchamps \cite{Gou86} found a similar formula $C_n C_{n+2} -
C_{n+1}^2$ for the number of simple excursions of length $2n$ in the cone with
angle $45^\circ$ (the first octant). It was thus natural to consider the cone
with angle $135^\circ$, and this is what Gessel did.

The second part of the motivation is that Gessel's model is a particular
instance of walks in the quarter plane. In 2001 there were already several
famous examples of such models: Kreweras' walk \cite{Kreweras,FL1,FL2,BM05}
(with allowed steps to the West, North-East and South) for which the GF
\eqref{eq:def_generating_function} is algebraic; Gouyou-Beauchamps's
walk~\cite{Gou86}; the simple walk~\cite{GKS}. Further, around 2000, walks in
the quarter plane were brought up to date, notably by Bousquet-M\'elou and
Petkov\v sek \cite{BMT1,BMT2}. Indeed, they were used to illustrate the
following phenomenon: although the numbers of walks satisfy a (multivariate)
linear recurrence with constant coefficients, their GF
\eqref{eq:def_generating_function} might be non-D-finite; see \cite{BMT2} for
the example of the knight walk.

\subsection*{Existing results in the literature} After 2001, many approaches
appeared for the treatment of walks in the quarter plane. Bousquet-M\'elou and
Mishna initiated a systematic study of such walks with small steps (this means
that the step set, i.e., the set of allowed steps for the walk, is a subset of
the set of the eight nearest neighbors). Mishna \cite{MM07,MM09} first
considered the case of step sets of cardinality three. She presented a
complete classification of the GF~\eqref{eq:def_generating_function} of these
walks with respect to the classes of algebraic, transcendental D-finite and
non-D-finite power series. Bousquet-M\'elou and Mishna \cite{BMM} then
explored all the $79$ small step sets\footnote{A priori, there are $2^8=256$
step sets, but the authors of \cite{BMM} showed that, after eliminating
trivial cases, and also those which can be reduced to walks in a half plane,
there remain $79$ inherently different models.}. They considered a functional
equation for the GF that counts walks in such a model leading to a
group\footnote{Historically, this group was introduced by Malyshev
\cite{MA,MAL,MALY} in the seventies. For details on this group we refer to
Section \ref{sec:mero}, in particular to equation \eqref{xieta}.} of
birational transformations of ${\bf C}^2$. In $23$ cases out of $79$ this
group turns out to be finite, and the corresponding functional equations were
solved in $22$ out of $23$ cases (the finiteness of the group being a crucial
feature in \cite{BMM}). The remaining case was precisely Gessel's. In 2008, a
method using computer algebra techniques was proposed by Kauers, Koutschan and
Zeilberger \cite{KZ,KKZ}. Kauers and Zeilberger \cite{KZ} first obtained a
computer-aided proof of the algebraicity of the GF counting Kreweras' walks. A
few months later, this approach was enhanced to cover Gessel's case, and the
conjecture (Problem \ref{enumi:problem_A}) was proved~\cite{KKZ}. At the same
time, Bostan and Kauers \cite{BK2} showed, again using heavy computer
calculations, that the complete GF counting Gessel walks is algebraic (Problem
\ref{enumi:problem_B}). Using the minimal polynomials obtained by Bostan and
Kauers, van Hoeij \cite[Appendix]{BK2} managed to obtain an explicit and
compact expression for the complete GF of Gessel walks.

Since the computerized proofs~\cite{KKZ,BK2}, several computer-free analyses of the GF of Gessel walks have been proposed~\cite{KRG,Ra,FR,Ayyer,PW,SP,KRnew}, but none of them solved
Gessel's conjecture (Problem~\ref{enumi:problem_A}), nor proved the
algebraicity of the complete GF (Problem \ref{enumi:problem_B}). We briefly
review the contributions of these works. Kurkova and Raschel~\cite{KRG}
obtained an explicit integral representation (a Cauchy integral) for
$Q(x,y;z)$. This was done by solving a boundary value problem, a method
inspired by the book~\cite{FIM}. It can be deduced from \cite{KRG} that the
generating function \eqref{eq:def_generating_function} is D-finite, since the
Cauchy integral of an algebraic function is D-finite~\cite{Picard,RoYo05}.
This partially solves Problem \ref{enumi:problem_B}. Nevertheless, the
representation of \cite{KRG} seems to be hardly accessible for further
analyses, such as for expressing the coefficients $q(i,j;n)$ in any
satisfactory manner, and in particular for providing a proof of Gessel's
conjecture. This approach has been generalized subsequently for all models of
walks with small steps in the quarter plane, see \cite{Ra}. In \cite{FR},
Fayolle and Raschel gave a proof of the algebraicity of the bivariate GF
(partially solving Problem~\ref{enumi:problem_B}), using probabilistic and
algebraic methods initiated in~\cite[Ch.~4]{FIM}: more specifically, they
proved that for any fixed value $z_0\in(0,1/4)$, the bivariate generating
function $Q(x,y;z_0)$ for Gessel walks is algebraic over $\mathbf R(x,y)$,
hence over $\mathbf Q(x,y)$. The same approach gives the nature of the
bivariate GF in all the other 22 models with finite group. It is not possible
to deduce Gessel's conjecture using the approach in~\cite{FR}, since it only
uses the structure of the solutions of the functional equation
\eqref{functional_equation} satisfied by the generating function
\eqref{eq:def_generating_function}, but does not give access to any explicit
expression. Ayyer~\cite{Ayyer} proposed a combinatorial approach inspired by
representation theory. He interpreted Gessel walks as words on certain
alphabets. He then reformulated $q(i,j;n)$ as numbers of words, and expressed
very particular numbers of Gessel walks. Petkov\v{s}ek and Wilf \cite{PW}
stated new conjectures, closely related to Gessel's. They found an expression
for Gessel's numbers in terms of determinants of matrices, by showing that the
numbers of walks are solution to an infinite system of equations. Ping
\cite{SP} introduced a probabilistic model for Gessel walks, and reduced the
computation of $q(i,j;n)$ to the computation of a certain probability. Using
then probabilistic methods (such as the reflection principle) he proved two
conjectures made by Petkov\v{s}ek and Wilf in~\cite{PW}. Very recently, using
the Mittag-Leffler theorem in a constructive way, Kurkova and
Raschel~\cite{KRnew} obtained new series expressions for the GFs of all models
of walks with small steps in the quarter plane, and worked out in detail the
case of Kreweras' walks. The present article is strongly influenced
by~\cite{KRnew} and can be seen as a natural prolongation of it.

\subsection*{Presentation of our method and organization of the article}

We fix $z\in (0, 1/4)$. To solve Problems
\ref{enumi:problem_A} and \ref{enumi:problem_B},
 we start from the GFs $Q(x,0;z)$ and $Q(0,y;z)$
 and from the functional equation (see e.g.~\cite[\S4.1]{BMM})
\begin{multline}
\label{functional_equation}
        K(x,y;z)Q(x,y;z)=\\K(x,0;z)Q(x,0;z)+K(0,y;z)Q(0,y;z)-K(0,0;z)
         Q(0,0;z)-x y, \qquad \forall \vert x\vert, \vert y\vert<1.
\end{multline}
Above, $K(x,y;z)$ is the kernel of the walk, given by
\begin{equation}
\label{def_kernel}
       K(x,y;z)=xyz\left(\sum_{(i,j)\in \mathfrak{G} } x^i y^j-1/z\right)= xyz(xy+x+1/x+1/(xy)-1/z),
\end{equation}
where $\mathfrak{G}=\{(1,1), (1,0), (-1,0), (-1,-1)\}$
denotes Gessel's step set (Figure \ref{Steps_Gessel}).

Rather than deriving an expression directly for the GFs $Q(x,0;z)$ and
$Q(0,y;z)$, we shall (equivalently) obtain expressions for $Q(x(\o),0;z)$ and
$Q(0,y(\o);z)$ for all $\o \in {\bf C}_\o$, where ${\bf C}_\o$ denotes the
complex plane, and where the functions $x(\o)$ and $y(\o)$ arise for reasons
that we now present. This idea of introducing the $\omega$-variable might
appear unnecessarily complicated; in fact it is very natural, in the sense
that many technical aspects of the reasonings will appear simple on the
complex plane ${\bf C}_\o$ (in particular, the group of the walk, the
continuation of the GFs, their regularity, their explicit expressions, etc.).

We shall see in Section \ref{sec:mero} that the elliptic curve defined by the
zeros of the kernel
\begin{equation}
\label{eq:def_T_z}
     {\bf T}_z=\{(x,y)\in({\bf C}\cup \{\infty\})^2 : K(x,y;z)=0\}
\end{equation}
is of genus $1$. This is a torus (constructed from two complex spheres
properly glued together), or, equivalently, a parallelogram $\o_1[0,1] +
\o_2[0,1]$ whose opposite edges are identified. It can be parametrized by
\begin{equation}
\label{eq:def_T_z_bis}
     {\bf T}_z = \{(x(\o),y(\o)) : \o\in {\bf C}/(\o_1{\bf Z}+\o_2{\bf Z})\},
\end{equation}
where the complex number $\o_1\in i{\bf R}$ and the real number $\o_2\in {\bf
R}$ are given in \eqref{expression_omega_1_2}, and the functions $x(\o),y(\o)$
are made explicit in \eqref{eq:uniformization_Gessel} in terms of the
Weierstrass elliptic function $\wp$ with periods $\o_1,\o_2$. By construction,
$K(x(\o),y(\o);z)$ is identically zero.
   
Equation \eqref{eq:uniformization_Gessel} and the periodicity of $\wp$ imply
that the functions $x(\o)$ and $y(\o)$ are {\it elliptic} functions on ${\bf
C}_\o$ with periods $\o_1,\o_2$. The complex plane ${\bf C}_\o$ can thus be
considered as the universal covering of ${\bf T}_z$ and can be viewed as the
union $\cup_{n,m \in {\bf Z}}\{\o_1[n,n+1)+ \o_2[m,m+1)\}$ of infinitely many
parallelograms, with the natural projection ${\bf C}_\o\to {\bf C}/(\o_1{\bf
Z}+\o_2{\bf Z})$ (see Figure \ref{Fig_intro}). Our first aim is to lift on it
the unknown functions $Q(x,0;z)$ and $Q(0,y;z)$, initially defined on their
respective unit disk.

The domain of $\o_1[0, 1)+ \o_2[0, 1)$ where $|x(\o)|<1$ is delimited by two
vertical lines (see Figure \ref{Fig_intro}). Due to the ellipticity of
$x(\o)$, the corresponding domain $\{\o \in {\bf C}_\o : |x(\o)|<1\}$ on the
universal covering ${\bf C}_\o$ consists of infinitely many vertical strips.
One of them, denoted by $\Delta_x$, belongs to the strip $i{\bf R}+ \o_2[0,
1)$; the other ones are its shifts by multiples of $\o_2>0$.

Next, we lift $Q(x,0;z)$ to the strip $\Delta_x \subset {\bf C}_\o$ putting
$Q(x(\o),0;z)=Q(x,0;z)$ for any $\o \in \Delta_x$ such that $x(\o)=x$. This
defines an analytic and $\o_1$-periodic---but yet unknown---function. In the
same way, $Q(0,y;z)$ is lifted to the corresponding strip $\Delta_y$ on ${\bf
C}_\o$. The intersection $\Delta_x\cap \Delta_y$ is a non-empty strip (Figure
\ref{Fig_intro}), where both functions are analytic. Since $K(x(\o), y(\o);
z)=0$, it follows from the main equation \eqref{functional_equation} that
\begin{equation*}
    K(x(\o),0;z)Q(x(\o),0;z)+K(0,y(\o);z)Q(0,y(\o);z)-K(0,0;z)Q(0,0;z)-
    x(\o)y(\o)\hspace{-1mm}=\hspace{-1mm}0
\end{equation*}
for any $\o\in \Delta_x \cap \Delta_y$.

This equation allows to continue $Q(x(\o), 0; z)$ to $\Delta_y$ and $Q(0,
y(\o);z)$ to $\Delta_x$, so that both functions become meromorphic and
$\o_1$-periodic on the strip $\Delta=\Delta_x\cup \Delta_y$.
     
           \unitlength=0.5cm
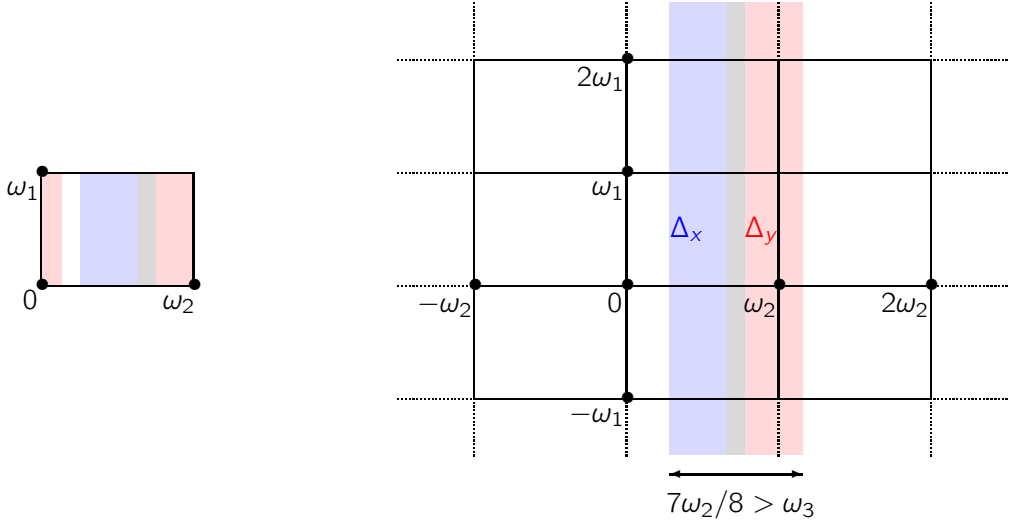
\begin{figure}[t]
\begin{center}\hspace{-18mm}
\begin{tikzpicture}(0,0)
           \linethickness{2mm}
 \draw[draw=none,fill=blue!15] (0.5,-1) -- (0.5,0.5) -- (1.25,0.5) -- (1.25,-1) -- cycle;
   \draw[draw=none,fill=black!15] (1.25,-1) -- (1.25,0.5) -- (1.5,0.5) -- (1.5,-1) -- cycle;
  \draw[draw=none,fill=red!15] (1.5,-1) -- (1.5,0.5) -- (2,0.5) -- (2,-1) -- cycle;
  \draw[draw=none,fill=red!15] (0,-1) -- (0,0.5) -- (0.25,0.5) -- (0.25,-1) -- cycle;
  \draw[draw=none,fill=blue!15] (8.25,-3.25) -- (8.25,2.75) -- (9,2.75) -- (9,-3.25) -- cycle;
  \draw[draw=none,fill=black!15] (9,-3.25) -- (9,2.75) -- (9.25,2.75) -- (9.25,-3.25) -- cycle;
    \draw[draw=none,fill=red!15] (9.25,-3.25) -- (9.25,2.75) -- (10,2.75) -- (10,-3.25) -- cycle;
     \end{tikzpicture}
     
\vspace{-27.5mm}
\hspace{-1.4mm}
\begin{picture}(0,0)
\put(-2,0){\line(1,0){4}}
\put(2,0){\line(1,0){4}}
\put(-6,0){\line(1,0){4}}
\put(-6,-3){\line(1,0){12}}
\put(-6,-3){\line(0,1){6}}
\put(-2,-3){\line(0,1){6}}
\put(2,-3){\line(0,1){6}}
\put(6,-3){\line(0,1){6}}
\put(-2,3){\line(1,0){4}}
\put(2,3){\line(1,0){4}}
\put(-6,3){\line(1,0){4}}
\put(-2,6){\line(1,0){4}}
\put(2,6){\line(1,0){4}}
\put(-6,6){\line(1,0){4}}
\put(-6,3){\line(0,1){3}}
\put(-2,3){\line(0,1){3}}
\put(2,3){\line(0,1){3}}
\put(6,3){\line(0,1){3}}
\put(-6,-3){\dottedline{0.1}(0,0)(0,-1.5)}
\put(-2,-3){\dottedline{0.1}(0,0)(0,-1.5)}
\put(2,-3){\dottedline{0.1}(0,0)(0,-1.5)}
\put(6,-3){\dottedline{0.1}(0,0)(0,-1.5)}
\put(-6,6){\dottedline{0.1}(0,0)(0,1.5)}
\put(-2,6){\dottedline{0.1}(0,0)(0,1.5)}
\put(2,6){\dottedline{0.1}(0,0)(0,1.5)}
\put(6,6){\dottedline{0.1}(0,0)(0,1.5)}
\put(6,6){\dottedline{0.1}(0,0)(2,0)}
\put(-6,-3){\dottedline{0.1}(0,0)(-2,0)}
\put(6,-3){\dottedline{0.1}(0,0)(2,0)}
\put(6,3){\dottedline{0.1}(0,0)(2,0)}
\put(6,0){\dottedline{0.1}(0,0)(2,0)}
\put(-6,6){\dottedline{0.1}(0,0)(-2,0)}
\put(-6,3){\dottedline{0.1}(0,0)(-2,0)}
\put(-6,0){\dottedline{0.1}(0,0)(-2,0)}
\put(-2.16,-0.16){{$\bullet$}}
\put(-6.16,-0.16){{$\bullet$}}
\put(-2.16,2.84){{$\bullet$}}
\put(-2.16,-3.16){{$\bullet$}}
\put(-2.16,5.84){{$\bullet$}}
\put(1.84,-0.16){{$\bullet$}}
\put(5.84,-0.16){{$\bullet$}}
\put(-7.5,-0.7){{$-\o_{2}$}}
\put(-2.5,-0.7){{$0$}}
\put(-2.9,2.4){{$\o_1$}}
\put(-3.5,-3.6){{$-\o_1$}}
\put(-3.3,5.3){{$2\o_1$}}
\put(1.1,-0.7){{$\o_{2}$}}
\put(4.7,-0.7){{$2\o_{2}$}}
\end{picture}
\hspace{-79.5mm}
\begin{picture}(4,-4)
\put(-2,0){\line(0,1){3}}
\put(2,0){\line(0,1){3}}
\put(-2,3){\line(1,0){4}}
\put(-2,0){\line(1,0){4}}
\put(-2.16,-0.16){{$\bullet$}}
\put(-2.16,2.84){{$\bullet$}}
\put(1.84,-0.16){{$\bullet$}}
\put(-2.5,-0.6){{$0$}}
\put(-2.9,2.4){{$\o_1$}}
\put(1.2,-0.6){{$\o_{2}$}}
\put(14.53,-5){\vector(1,0){3.5}}
\put(18.03,-5){\vector(-1,0){3.5}}
\put(14.44,-6){$7\o_2/8>\o_3$}
\put(14.53,1.3){\textcolor{blue}{$\Delta_x$}}
\put(16.49,1.3){\textcolor{red}{$\Delta_y$}}
\end{picture}
\end{center}

 \vspace{28mm}
\caption{Left: the subdomains of the fundamental parallelogram
$[0,\o_1)+[0,\o_2)$ where $\vert x(\o)\vert <1$ (blue), $\vert y(\o)\vert <1$
(red) and $\vert x(\o)\vert,\vert y(\o)\vert <1$ (grey). Right: corresponding
domains $\Delta_x$ (blue) and $\Delta_y$ (red) on the universal covering ${\bf
C}_\o$. The intersection $\Delta_x\cap\Delta_y$ is represented in grey. The
domain $\Delta=\Delta_x\cup \Delta_y$ has length $7\o_2/8$, which is larger
than $\o_3=3\o_2/4$.}
\label{Fig_intro}
\end{figure}

The crucial point of our approach is the following: letting
$r_x(\o)=K(x(\o),0;z)Q(x(\o),0;z)$, we have the key-identity
\begin{equation}
\label{eq:prolongement_introduction}
     r_x(\o-\o_3)=r_x(\o)+f_x(\o),\qquad \forall \o\in {\bf C}_\o,
\end{equation}
where the shift vector $\o_3=3\o_2/4$ (real positive) and the function $f_x$
are explicit (and relatively simple, see \eqref{def_omega_3} and
\eqref{fxfy}). Equation \eqref{eq:prolongement_introduction} has
many useful consequences:
\begin{enumerate}[label={\rm (\Roman{*})},ref={\rm (\Roman{*})}]
\item Due to (a repeated use of)
 \eqref{eq:prolongement_introduction}, the function
$r_x(\o)$ can be {\it meromorphically continued} from its initial
domain of definition $\Delta$ to {\it the
whole plane} 
\begin{equation}
\label{delta}
{\bf C}_\o=\bigcup_{n \in {\bf Z}} \{\Delta+n\o_3\},
\end{equation} 
see
Section \ref{sec:mero}. By projecting back on ${\bf C}_x$, we shall recover all
branches of $Q(x,0;z)$.

\item\label{II}
We shall apply four times \eqref{eq:prolongement_introduction} and
prove the identity
$f_x(\o)+f_x(\o+\o_3)+f_x(\o+2\o_3)+f_x(\o+3\o_3)=0$ (as we remark
in Section \ref{sec:gf_zeta}, it exactly corresponds to the fact
that the orbit sum of Gessel's walks is zero, which was noticed in
\cite[Section 4.2]{BMM}), from where we shall derive that {\it $r_x$
is elliptic with periods $(\o_1,4\o_3)$}, see
Section~\ref{sec:gf_zeta}.

\item Since by \eqref{kn} $4\o_3=3\o_2$ (which is a non-trivial fact,
and means that the group---to be defined in Section
\ref{sec:mero}---of Gessel's model has order $8$), the theory of
transformations of elliptic functions will imply that $r_x$ is algebraic in the
Weierstrass function $\wp$ with periods $\o_1,\o_2$.
This will eventually
yield the {\it algebraicity} of the GF $Q(x,0;z)$. Using a similar
result for $Q(0,y;z)$ and the functional equation
\eqref{functional_equation}, we shall derive in this way the
solution to Problem \ref{enumi:problem_B}, see
Section~\ref{sec:proof_B}.

\item From \eqref{eq:prolongement_introduction} we shall also find the poles
of $r_x$ and the principal parts at them. In general, it is clearly impossible
to deduce the expression of a function from the knowledge of its poles. A
notable exception is constituted by elliptic functions, which is the case of
the function $r_x$ for Gessel walks, see \ref{II} above. From this fact we
shall deduce an explicit expression of $r_x$ in terms of elliptic
$\zeta$-functions. By projection on ${\bf C}_x$, this will give {\it a new
explicit expression of $Q(x,0;z)$ for Gessel walks as an infinite series.} An
analogous result will hold for $Q(0,y;z)$, and \eqref{functional_equation}
will then lead to a new explicit expression for $Q(x,y;z)$, see
Section~\ref{sec:gf_zeta}. This part of the article is inspired
by~\cite{KRnew}. However, it does not rely on results
from~\cite{KRnew}, and it is more elementary.

\item Evaluating the so-obtained expression of $Q(x,0;z)$ at $x=0$ and
performing further simplifications (based on several identities involving
special functions \cite{AS}, and on the theory of the Darboux coverings for
tetrahedral hypergeometric equations \cite{Vidunas08}), we shall obtain the
solution of Problem \ref{enumi:problem_A}, and, in this way, the first human
proof of Gessel's conjecture, see Section \ref{sec:proof_A}. \end{enumerate}

\section{Meromorphic continuation of the generating functions}
\label{sec:mero}

\subsection*{Roadmap}

The aim of Section \ref{sec:mero} is to prove equation
\eqref{eq:prolongement_introduction}, which, as we have seen just above, is
the fundamental starting tool for our analysis. In passing, we shall also
introduce some useful tools for the next sections. Though crucial, this
section does not contain any new result. We thus choose to state the results
and to give some intuition, without proof, and we refer to \cite[Sections
2--5]{KRIHES} and to \cite[Section 2]{KRnew} for full details.

We first properly define the Riemann surface ${\bf T}_z$ in
\eqref{eq:def_T_z}, then we connect it to elliptic functions (in particular,
we obtain the expressions of the functions $x(\o)$ and $y(\o)$ in terms of the
Weierstrass elliptic function~$\wp$). We next introduce the universal covering
of ${\bf T}_z$ (the plane ${\bf C}_\o$) and the group of the walk. We lift the
GFs to the universal covering (this allows us to define the function $r_x(\o)$
in \eqref{eq:prolongement_introduction}). Finally we show how to
meromorphically continue $r_x(\o)$, and we prove the
key-equation~\eqref{eq:prolongement_introduction}.

For brevity, we drop the variable $z$ (which is kept fixed in $(0, 1/4)$) from
the notation when no ambiguity arises, writing for instance $Q(x,y)$ instead
of $Q(x,y;z)$ and ${\bf T}$ instead of~${\bf T}_z$. In
Appendix~\ref{appendix-elliptic}, we gather together a few useful results on
the Weierstrass functions $\wp(z)$ and $\zeta(z)$.

\subsection*{Branch points and Riemann surface ${\bf T}$}
The kernel $K(x,y)$ defined in \eqref{def_kernel} is a quadratic
polynomial with respect to both variables~$x$ and~$y$. The algebraic function $X(y)$
defined by $K(X(y),y)=0$ has thus two branches, and four branch
points that we call $y_i$, $i\in\{1,\ldots ,4\}$\footnote{By definition, a branch point $y_i$ is a point $y\in{\bf C}$ such that the two roots $X(y)$ are equal.}. They are the roots of the discriminant with respect to $x$ of the polynomial $K(x,y)$:
\begin{equation*}
     \widetilde d(y)=(-y)^2-4z^2(y^2+y)(y+1).
\end{equation*}
We have $y_1=0$, $y_4=\infty=1/y_1$, and
\begin{equation*}
     y_2=\frac{1-8z^2-\sqrt{1-16z^2}}{8z^2},\qquad y_3=\frac{1-8z^2+\sqrt{1-16z^2}}{8z^2}=1/y_2,
\end{equation*}
so that $y_1<y_2<y_3<y_4$. Since there are four distinct
branch points, the Riemann surface of $X(y)$, which has the same construction
as the Riemann surface of the algebraic function
\begin{equation}
\label{eq:relation_close_elliptic}
     \sqrt{\widetilde d(y)}=\sqrt{-4z^2(y-y_1)(y-y_2)(y-y_3)},
\end{equation}
is a torus ${\bf T}^y$
(i.e., a Riemann surface of genus $1$). We refer to \cite[Section 4.9]{JS} for the construction of the Riemann surface of the square root of a polynomial, and to~\cite[Section~2]{KRIHES} for this same construction in the context of models of walks in the quarter plane.

The analogous statement holds for the algebraic function $Y(x)$ defined by $K(x,Y(x))=0$. Its
four branch points $x_i$, $i\in\{1,\ldots ,4\}$, are the roots of
\begin{equation}
     \label{dx}
     d(x)=(zx^2-x+z)^2-4z^2x^2.
\end{equation}
They are all real, and numbered so that $x_1<x_2<x_3<x_4$:
\begin{equation*}
     x_1 = \frac{1+2z-\sqrt{1+4z}}{2z},\qquad x_2 = \frac{1-2z-\sqrt{1-4z}}{2z},\qquad x_3=1/x_2,\qquad x_4=1/x_1.
\end{equation*}
The Riemann surface of $Y(x)$ is also a torus ${\bf T}^x$. Since
${\bf T}^x$ and ${\bf T}^y$ are conformally equivalent
(there are two different views of the same surface), in the remainder of our work we shall
consider a single Riemann surface ${\bf T}$ with two different
coverings $x : {\bf T} \to {\bf C}_x$ and $y : {\bf T} \to {\bf C}_y$; see
Figure~\ref{fig:different_levels}.

\unitlength=0.6cm
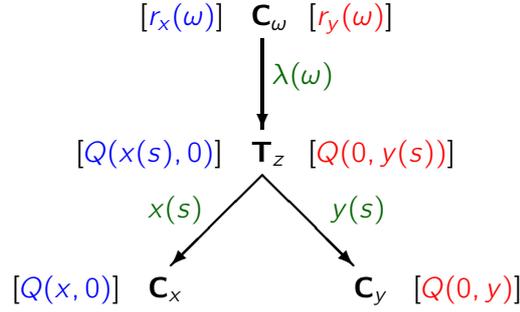
\begin{figure}[t]
  \begin{center}
        \begin{picture}(0,0.5)
    \thicklines
    \put(-0.25,0){${\bf C}_\o$}
    \put(-2.7,0){[\textcolor{blue}{$r_x(\o)$}]}
    \put(1,0){[\textcolor{red}{$r_y(\o)$}]}
    \put(0,-0.3){\vector(0,-1){2}}
    \put(-0.25,-3){${\bf T}_z$}
    \put(-4.1,-3){[\textcolor{blue}{$Q(x(s),0)$}]}
    \put(1,-3){[\textcolor{red}{$Q(0,y(s))$}]}
    \put(0,-3.3){\vector(1,-1){2}}
    \put(0,-3.3){\vector(-1,-1){2}}
    \put(-2.5,-4.25){\textcolor{darkgreen}{$x(s)$}}
    \put(1.5,-4.25){\textcolor{darkgreen}{$y(s)$}}
    \put(0.2,-1.3){\textcolor{darkgreen}{$\lambda(\omega)$}}
    \put(-2.5,-6){${\bf C}_x$}
    \put(2,-6){${\bf C}_y$}
    \put(-5.5,-6){[\textcolor{blue}{$Q(x,0)$}]}
    \put(3.3,-6){[\textcolor{red}{$Q(0,y)$}]}
        \end{picture}
  \end{center}
  \vspace{35mm}
\caption{The GF $Q(x,0)$ is defined on (a subdomain of) the complex plane~${\bf C}_x$. It will be lifted on the Riemann surface ${\bf T}$ as $s\mapsto Q(x(s),0)$, and on the universal covering ${\bf C}_\o$ as $r_x(\o)=K(x(\o),0)Q(x(\o),0)$. The same holds for $Q(0,y)$. We have also represented the projections between the different levels.}
\label{fig:different_levels}
\end{figure}

\subsection*{Connection with elliptic functions}

The torus ${\bf T}$, like any compact Riemann surface of genus $1$,
is isomorphic to a quotient space
${\bf C}/(\omega_1{\bf Z}+\omega_2{\bf Z})$,
where $\omega_1,\omega_2$ are complex numbers linearly
independent on ${\bf R}$, see \cite{JS}.
 This set can obviously be thought as the (fundamental)
  parallelogram $[0, \o_1]+[0, \o_2]$, whose opposed edges
  are identified (here, all parallelograms will be rectangles).
  The periods $\omega_1,\omega_2$ are unique (up to a unimodular transform)
  and are found in \cite[Lemma 3.3.2]{FIM}\footnote{Note a small misprint in Lemma 3.3.2 in \cite{FIM}, namely a (multiplicative) factor of 2 that should be 1; the same holds for \eqref{def_omega_3}.}:
     \begin{equation}
     \label{expression_omega_1_2}
          \omega_1= i\int_{x_1}^{x_2}\frac{\text{d}x}{\sqrt{-d(x)}},\qquad
           \omega_2 = \int_{x_2}^{x_3} \frac{\text{d}x}{\sqrt{d(x)}}.
     \end{equation}
The expression of the periods above cannot be considerably simplified,
but could be written in terms of elliptic integrals, see e.g.~\cite[Eqs. (7.20)--(7.25)]{KRIHES}.

The algebraic curve defined by the kernel $K(x,y)$ can be parametrized using the following uniformization formul\ae,
in terms of the Weierstrass elliptic function $\wp$ with periods
 $\omega_{1},\omega_{2}$ (whose expansion is given in equation
 \eqref{eq:first_time_expansion_wp}):
     \begin{equation}
     \label{eq:uniformization_Gessel}
          \left\{\begin{array}{rcl}
          x(\omega)&=&\displaystyle x_{4}+\frac{d'(x_{4})}
          {\wp(\omega)-d''(x_{4})/6},
          \\ y(\omega)&=&\displaystyle \frac{1}{2a(x(\omega))}
          \left(-b(x(\omega))+\frac{d'(x_{4})
          \wp'(\omega)}{2(\wp(\omega)-
          d''(x_{4})/6)^{2}}\right).\end{array}\right.
     \end{equation}
Here $a(x)=zx^2$ and $b(x)=zx^2-x+z$ are the coefficients of $K(x,y)=a(x)y^2 + b(x)y + c(x)$, and $d(x)$ is defined in \eqref{dx} as $d(x) = b(x)^2-4a(x)c(x)$.
  Due to \eqref{eq:uniformization_Gessel},
  the functions  $x(\o),y(\o)$
      are elliptic functions on the whole~${\bf C}$ with periods $\o_1,\o_2$. By construction
\begin{equation}
\label{kxy0}
      K(x(\o),y(\o))=0, \qquad \forall \o \in {\bf C}.
\end{equation}
     We shall not prove equation
     \eqref{eq:uniformization_Gessel}
     here (we refer to \cite[Lemma 3.3.1]{FIM} for details).
     Let us simply point out that it corresponds
     to the rewriting of $K(x,y)=0$ as $(2a(x)y+b(x))^2=d(x)$, then as $w^2=z^2(x-x_1)(x-x_2)(x-x_3)(x-x_4)$, 
      and finally as the classical identity involving elliptic functions $(\wp')^2=4\wp^3-g_2\wp-g_3=4(\wp-e_1)(\wp-e_2)(\wp-e_3)$.

\subsection*{Universal covering}
The universal covering of ${\bf T}$ has the form $({\bf C},
\lambda)$, where ${\bf C}$ is the complex plane that can be viewed
as the union of infinitely many parallelograms
     \begin{equation*}
          \Pi_{m,n}=\omega_1[m,m+1)+\omega_2[n,n+1),\qquad m,n \in {\bf Z},
     \end{equation*}
which are glued together and $\lambda : {\bf C}\to {\bf T}$ is a
non-branching covering map (Figure \ref{fig:different_levels}).
 This is a standard fact on Riemann surfaces, see,
  e.g., \cite[Section 4.19]{JS}.
     For any $\o \in {\bf C}$ such that
  $\lambda \o=s \in {\bf T}$,
  we have $x(\o)=x(s)$ and $y(\o)=y(s)$.
 The expression of $\lambda \o$ is very simple:
  it equals the unique $s$ in the rectangle $[0,\o_1)+[0,\o_2)$
   such that $\o=s+m\o_1+n\o_2$ with some $m,n\in {\bf Z}$.


Furthermore, since each parallelogram $\Pi_{m,n}$ represents
a torus ${\bf T}$ composed of two complex spheres,
the function $x(\o)$ (resp.\ $y(\o)$)
takes each value of ${\bf C}\cup \{\infty\}$ twice within this parallelogram,
 except for the branch points $x_i$, $i\in\{1,\ldots ,4\}$
 (resp.\ $y_i$, $i\in\{1,\ldots ,4\}$).
The points $\o_{x_i} \in \Pi_{0,0}$ such that $x(\o_{x_i})=x_i$,
$i\in\{1,\ldots,4\}$, are represented in
Figure~\ref{The_fundamental_parallelogram}. They are equal to
\begin{equation*}
      \o_{x_1}=\o_2/2,\qquad \o_{x_2}=(\o_1+\o_2)/2,\qquad \o_{x_3}=\o_1/2,\qquad \o_{x_4}=0.
\end{equation*}
   The points $\o_{y_i}$ such that $y(\o_{y_i})=y_i$ are just the
shifts of $\o_{x_i}$ by a real vector $\o_3/2$ (to be defined
below, in equation~\eqref{def_omega_3}): $\o_{y_i}=\o_{x_i}+\o_3/2$ for $i\in\{1,\ldots,4\}$, see
also Figure \ref{The_fundamental_parallelogram}. We refer to
\cite[Chapter 3]{FIM} and to \cite{KRG,Ra} for proofs of these
facts.
   The vector $\o_3$ is defined as in~\cite[Lemma 3.3.3]{FIM}:
   \begin{equation}
     \label{def_omega_3}
          \omega_{3}= \int_{-\infty}^{x_{1}}
          \frac{\text{d}x}{\sqrt{d(x)}}.
     \end{equation}
For Gessel's model we have the following relation
\cite[Proposition 14]{KRG}, which holds for all $z\in(0,1/4)$:
     \begin{equation}
     \label{kn}
         {\omega_3}/{\omega_2} = {3}/{4}.
     \end{equation}
The identity above contains a lot of informations:
 it turns out that for any model of walks in the quarter plane,
 the quantity ${\omega_3}/{\omega_2}$
 is a rational number (independent of $z$) if and only if a certain group
 (to be defined in the next section) is finite,
 see \cite[Eq.~(4.1.11)]{FIM}.
 Equation \eqref{kn} thus readily implies that
 Gessel's group is finite (of order $8$).
  Although we shall not use this result here,
  let us also mention that if ${\omega_3}/{\omega_2}$ is rational,
  then the solution of the functional equation \eqref{functional_equation}
  (i.e., the generating function \eqref{eq:def_generating_function} of interest)
   is D-finite (with respect to~$x$ and~$y$), see \cite[Theorem 2.1]{FR}.
    On the other hand, the relation \eqref{kn}
    does not imply, a priori,
    that the generating function \eqref{eq:def_generating_function}
    is algebraic.

      \unitlength=0.52cm
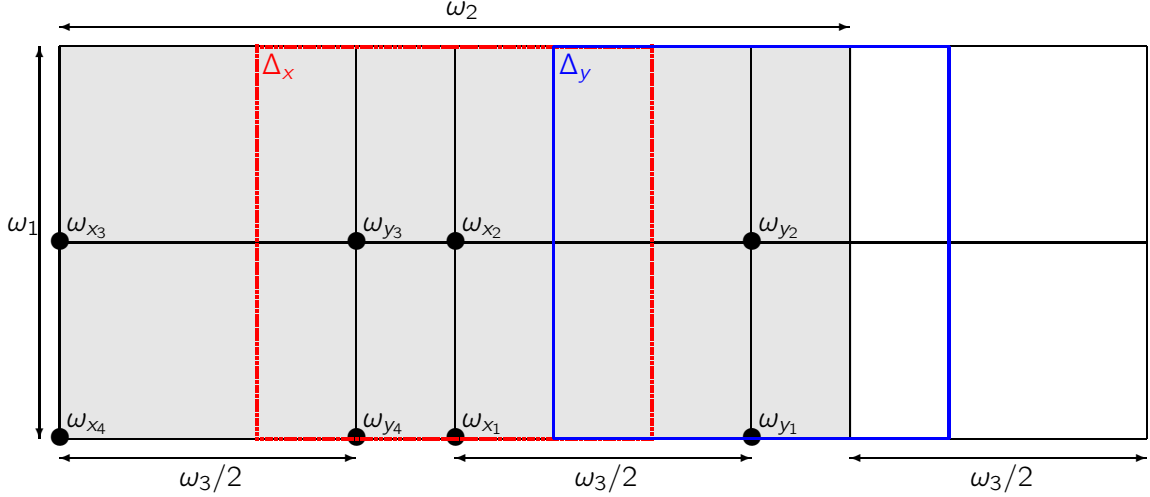
\begin{figure}[t]
    \vspace{5mm}
  \begin{center}
\begin{tabular}{cccc}
    \hspace{-7.3cm}

 \begin{tikzpicture}(19.55,6.5)
           \linethickness{2mm}
 \draw[draw=none,fill=black!10] (-5,5.2) -- (5.42,5.2) -- (5.42,0) -- (-5,0) -- cycle;
     \end{tikzpicture}

\hspace{-53.5mm}\begin{picture}(0,0)(0,0)
\put(-10,0){\line(1,0){20}}
\put(-10,10){\line(1,0){20}}
\put(-10,5){\line(1,0){20}}
\put(-10,0){\line(0,1){10}}
\put(10,0){\line(0,1){10}}
\put(0,0){\line(0,1){10}}
\put(7.5,0){\line(0,1){10}}
\put(-2.5,0){\line(0,1){10}}
\put(-10.27,-0.27){{\LARGE$\bullet$}}
\put(-2.77,-0.27){{\LARGE$\bullet$}}
\put(-0.27,-0.27){{\LARGE$\bullet$}}
\put(7.23,-0.27){{\LARGE$\bullet$}}
\put(-10.27,4.73){{\LARGE$\bullet$}}
\put(-2.77,4.73){{\LARGE$\bullet$}}
\put(-0.27,4.73){{\LARGE$\bullet$}}
\put(7.23,4.73){{\LARGE$\bullet$}}
\put(-9.8,0.3){{$\o_{x_4}$}}
\put(-2.3,0.3){{$\o_{y_4}$}}
\put(0.2,0.3){{$\o_{x_1}$}}
\put(7.7,0.3){{$\o_{y_1}$}}
\put(-9.8,5.3){{$\o_{x_3}$}}
\put(-2.3,5.3){{$\o_{y_3}$}}
\put(0.2,5.3){{$\o_{x_2}$}}
\put(7.7,5.3){{$\o_{y_2}$}}
\put(-10,-0.5){\vector(1,0){7.5}}
\put(-2.5,-0.5){\vector(-1,0){7.5}}
\put(-6.95,-1.2){{$\o_{3}/2$}}
\put(0,-0.5){\vector(1,0){7.5}}
\put(7.5,-0.5){\vector(-1,0){7.5}}
\put(3.05,-1.2){{$\o_{3}/2$}}
\put(-10,10.5){\vector(1,0){20}}
\put(10,10.5){\vector(-1,0){20}}
\put(-0.2,10.8){{$\o_{2}$}}
\put(-10.5,0){\vector(0,1){10}}
\put(-10.5,10){\vector(0,-1){10}}
\put(-11.3,5.3){{$\o_{1}$}}
\put(10,0){\line(1,0){7.5}}
\put(10,5){\line(1,0){7.5}}
\put(10,10){\line(1,0){7.5}}
\put(17.5,10){\line(0,-1){10}}
\put(13.05,-1.2){{$\o_{3}/2$}}
\put(10,-0.5){\vector(1,0){7.5}}
\put(17.5,-0.5){\vector(-1,0){7.5}}

 \linethickness{0.4mm}
\put(-5,0){\textcolor{red}{\dottedline{0.12}(0,0)(0,10)}}
\put(5,0){\textcolor{red}{\dottedline{0.12}(0,0)(0,10)}}
\put(-5,10){\textcolor{red}{\dottedline{0.12}(0,0)(10,0)}}
\put(-5,0){\textcolor{red}{\dottedline{0.12}(0,0)(10,0)}}

 \linethickness{0.3mm}

\put(2.5,0){\textcolor{blue}{\line(0,1){10}}}
\put(12.5,0){\textcolor{blue}{\line(0,1){10}}}
\put(2.5,10){\textcolor{blue}{\line(1,0){10}}}
\put(2.5,0){\textcolor{blue}{\line(1,0){10}}}

\put(-4.88,9.3){\textcolor{red}{{$\Delta_x$}}}
\put(2.62,9.3){\textcolor{blue}{{$\Delta_y$}}}
\end{picture}
    \end{tabular}
  \end{center}

  \vspace{10mm}
\caption{The fundamental parallelograms for
the functions $r_x(\o)$ and $r_y(\o)$, namely, $\Pi_{0,0}=\o_1[0,1)+\o_2[0,1)$ (in grey) and $\Pi_{0,0}+\o_3/2=\o_1[0,1)+\o_2[0,1)+\o_3/2$, and some important points and domains on them.}
\label{The_fundamental_parallelogram}
\end{figure}

\subsection*{Galois automorphisms and group of the walk}

It is easy to see that the 
birational transformations 
$\xi$ and $\eta$ of ${\bf C}^2$ defined by
     \begin{equation}
     \label{xieta}
          \xi(x,y)= \left(x,\frac{1}{x^2 y}\right),\qquad \eta(x,y)=\left(\frac{1}{xy},y\right)
 \end{equation}
leave invariant the quantity $\sum_{(i,j)\in \mathfrak{G} } x^i y^j$
(and therefore also the set ${\bf T}$ in \eqref{eq:def_T_z}
for any fixed $z\in (0,1/4)$).
 They span a group $\langle \xi,\eta\rangle$
 of birational transformations of ${\bf C}^2$, which is a dihedral group,
  since
\begin{equation}
\label{id}
     \xi^2=\eta^2=\text{id}.
\end{equation}
It is of order $8$, see \cite{BMM}. 

This group was first defined in a probabilistic context by Malyshev
\cite{MA,MAL,MALY}; it was introduced for the combinatorics of walks
with small steps in the quarter plane by Bousquet-M\'elou
\cite{BM02,BM05},  and more systematically by Bousquet-M\'elou and
Mishna \cite{BMM}. In \eqref{xieta}, it is defined as a group on
${\bf C}^2={\bf C}_x\times {\bf C}_y$, i.e., at the bottom level of
Figure \ref{fig:different_levels}. We now lift it to the upper
levels of Figure \ref{fig:different_levels}, that is, to ${\bf T}_z$
and ${\bf C}_\o$. Our final objective is to demonstrate the
following result:
\begin{equation}
\label{hatxieta}
           \xi \o=-\o+\o_1+\o_2,\qquad  \eta \o=-\o+\o_1+\o_2+\o_3,\qquad \forall \o\in{\bf C}.
\end{equation}
This equation illustrates the fact that the universal covering
is a natural object: while the expressions of the elements of the
group were rather complicated in \eqref{xieta}, they are now just
affine functions.

\begin{proof}[Proof of Equation \eqref{hatxieta}]
First, we lift the elements of the group to the intermediate level ${\bf T}$ as the
restriction of $\langle \xi, \eta \rangle $ on ${\bf T}$. Namely,
any point $s \in {\bf T}$ admits the two ``coordinates''
$(x(s),y(s))$, which satisfy
$K(x(s), y(s))=0$  by construction. For
any $s \in {\bf T}$, there exists a unique $s'$ (resp.\ $s''$) such
that $x(s')=x(s)$ (resp.\ $y(s'')=y(s)$). The values $x(s),x(s')$
(resp.\ $y(s),y(s'')$) are the two roots of the second degree equation
 $K(x, y(s))=0$ (resp.\ $K(x(s), y)=0$) in $x$ (resp.\
$y$). The automorphism $ \xi : {\bf T} \to {\bf T}$ (resp.\ $ \eta :
{\bf T} \to{\bf T}$) is defined by the identity $\xi s =s'$
(resp.\ $\eta s=s''$) and is called a Galois automorphism,
following the terminology of \cite{MA,MAL,MALY,FIM}. Clearly by
\eqref{xieta} and \eqref{id}, we have, for any $s\in {\bf T}$,
     \begin{equation*}
          x(\xi s)=x(s),\ \  y(\xi s)=\frac{1}{x^2(s)y(s)},\ \
          x( \eta s)=\frac{1}{y(s)x(s)}, \ \  y(\eta
          s)=y(s),\ \  \xi^2(s)=\eta^2(s)=s.
     \end{equation*}
Finally $ \xi s=s$ (resp.\ $\eta s=s$) if and only if $x(s)=x_i$,
$i\in\{1,\ldots ,4\}$ (resp.\ $y(s)=y_i$, for some $i\in\{1,\ldots
,4\}$).

  There are many ways to lift $\xi$ and $\eta$ from ${\bf T}$ to the universal
  covering ${\bf C}$. For any of them
  $\xi(\o_{x_i})=\o_{x_i}+n\o_1+m\o_2$,
$\eta(\o_{y_i})=\o_{y_i}+k\o_1+l\o_2$ for some $n,m,k,l\in {\bf Z}$.
   There should also exist  constants $p,q,r,s \in {\bf Z}$ such that
 $\xi^2={\rm id}+p\o_1+q\o_2$ and $\eta^2={\rm id}+r\o_1+s\o_2$.

       We follow the way of \cite{FIM} and
  \cite{KRIHES}, lifting them on  ${\bf C}$ in such a way that
$\o_{x_2}$ and $\o_{y_2}$ are their fixed points, respectively (see
Figure \ref{The_fundamental_parallelogram}).
  It follows immediately that $p,q,r,s=0$.
    Since any automorphism of ${\bf C}_\o$ has the form
       $\alpha \o +\beta$ with $\alpha, \beta \in {\bf C}$, 
  the relations $\xi^2={\rm id}$ and $\xi(\o_{x_2})=\o_{x_2}$
     (resp.\ $\eta^2={\rm id}$ and $\eta(\o_{y_2})=\o_{y_2}$),
   lead to $\alpha=-1$ and $\beta=2\o_{x_2}$ (resp.\ $\alpha=-1$ and $\beta=2\o_{y_2}$).
 We obtain equation \eqref{hatxieta}, recalling that
$\o_1+\o_2=2\o_{x_2}$ and that $\o_1+\o_2+\o_3=2\o_{y_2}$. The proof of 
equation~\eqref{hatxieta} is thus completed.
\end{proof}

By construction the elements of the group satisfy
     \begin{equation*}
          x(\xi \o)=x(\o),\qquad y(\eta\o)=y(\o), \qquad \forall \o\in{\bf C}.
     \end{equation*}
Finally, by \eqref{hatxieta}
\begin{equation}
\label{hatxi} \eta\xi \o=\o+\o_3,\qquad \xi\eta\o=\o-\o_3, \qquad \forall \o\in{\bf C}.
\end{equation}

\subsection*{Lifting of the GFs on the universal covering}

   The functions $Q(x,0)$ and $Q(0,y)$ can be
   lifted on their respective natural domains of definition on ${\bf
  T}$ and next on the corresponding domains of the universal
  covering ${\bf C}$, namely $\{\o\in{\bf C} : |x(\o)|<1\}$
  and  $\{\o \in{\bf C}: |y(\o)|<1\}$.
   This lifting procedure is illustrated in Figure \ref{fig:different_levels}.
 The first level (at the bottom) represents the complex planes
 ${\bf C}_x$ and ${\bf C}_y$, where $Q(x,0)$ and $Q(0,y)$ are defined in
  $\{|x|<1\}$ and $\{|y|<1\}$.
 The second level, where the variables $x$ and $y$ are not independent anymore,
   is given by ${\bf T}$. The third level is ${\bf C}$, the universal
   covering of  ${\bf T}$.
     All this construction has been first elaborated by  Malyshev \cite{MA}
   for stationary probability GFs of random walks
   in ${\bf N}^2$, and has been further developed in \cite{FIM}
   and in \cite{KRIHES} in a combinatorial context.

The domains
\begin{equation*}
     \{\o\in{\bf C} : |x(\o)|<1\},\qquad\{\o\in{\bf C}: |y(\o)|<1\}
\end{equation*}
consist of infinitely many curvilinear strips, which differ from
translations by multiples of $\o_2$. We denote by $\Delta_x$
(resp.\ $\Delta_y$) the strip that is within $\cup_{m \in {\bf
Z}}\Pi_{m,0}$ (resp.\ $\cup_{m \in {\bf Z}} \Pi_{m,0}+\o_3/2$). The
domain $\Delta_x$ (resp.\ $\Delta_y$) is delimited by vertical
lines, see \cite[Proposition 26]{KRG}, and is represented in Figure~\ref{The_fundamental_parallelogram}. We notice that the function
$Q(x(\o),0)$ (resp.\ $Q(0,y(\o))$) is well defined in $\Delta_x$
(resp.\ $\Delta_y$), by its expression
\eqref{eq:def_generating_function} as a GF. Let us define
\begin{equation}
\label{rxryrxry}\left\{
\begin{array}{rcl}
     r_x(\o)\ =&\! K(x(\o),0)Q(x(\o), 0),\qquad \forall \o \in \Delta_x,\\
     r_y(\o)\ =&\! K(0, y(\o))Q(0, y(\o)),\qquad \forall \o \in
     \Delta_y.
     \end{array}\right.
\end{equation}
The domain $\Delta_x \cap \Delta_y$ is a non-empty open strip, see
Figure \ref{The_fundamental_parallelogram}.
 It follows from \eqref{functional_equation} and \eqref{kxy0} that
\begin{equation}
\label{kze}
     r_x(\o)+r_y(\o)-K(0,0)Q(0,0)-x(\o)y(\o)=0,\qquad \forall \o \in \Delta_x \cap \Delta_y.
\end{equation}

\subsection*{Meromorphic continuation of the GFs on the universal covering}
Let $\Delta =\Delta_x \cup \Delta_y$. Due to \eqref{kze}, the
functions $r_x(\o)$ and $r_y(\o)$ can be continued as meromorphic
functions on the whole domain~$\Delta$, by setting
\begin{align}
&r_x(\o)= -r_y(\o)+K(0,0)Q(0,0)+x(\o)y(\o), \qquad
\forall \o \in
\Delta_y,\nonumber\\
 &r_y(\o)= -r_x(\o)+K(0,0)Q(0,0)+x(\o)y(\o),\qquad \forall \o
\in \Delta_x.\label{rDrD}
\end{align}
To continue the functions from $\Delta$ to the whole complex plane ${\bf C}$,
 we first notice that
         $ \cup_{n\in {\bf Z}}(\Delta+n\omega_3) ={\bf C}$ (see \eqref{delta}),
as proved in \cite{FIM, KRIHES} and illustrated in Figure
\ref{The_fundamental_parallelogram}.  

Let us define
\begin{equation}
\label{fxfy}\left\{
\begin{array}{rcl}
     f_x(\o)&=&y(\omega)[x(-\omega+\o_1+\o_2+\o_3)-x(\omega)],\\
     f_y(\o)&= &x(\omega)[y(-\omega+\o_1+\o_2)-y(\omega)].
     \end{array}\right.
\end{equation}

\begin{lem}[\cite{KRIHES}]
\label{thm2}
The functions $r_x(\omega)$ and $r_y(\omega)$ can be continued meromorphically to the whole of ${\bf C}$. Further, for any $\omega \in {\bf C}$, we have
  \begin{align}
  &r_x(\omega)+r_y(\omega)-K(0,0)Q(0,0)-x(\omega)y(\omega) =0,
       \label{sqs2}
       \\
       &r_x(\omega-\omega_3)=r_x(\omega)+f_x(\o),
       \nonumber
       \\
       &r_y(\omega+\omega_3)=r_y(\omega)+f_y(\o),
       \label{cont1}
       \\
       &\left\{\begin{array}{cc}
       r_x( \xi \omega)=r_x(\omega), \\r_y( \eta\omega)=r_y(\omega),
       \end{array}\right.
       \label{xietabad}
       \\
       &\left\{\begin{array}{cc}
       r_x(\omega+\omega_1)=r_x(\omega), \\
       r_y(\omega+\omega_1)=r_y(\omega).
       \end{array}\right.
       \label{buzz}
  \end{align}
   \end{lem}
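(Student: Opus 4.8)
The plan is to deduce every identity of Lemma~\ref{thm2} from the functional equation \eqref{kze} together with the explicit affine form \eqref{hatxieta} of the Galois automorphisms, establishing them first on the open overlap $\Delta_x\cap\Delta_y$ and then propagating them to all of ${\bf C}_\o$ by meromorphic continuation. The whole argument is formal once the invariances of $r_x$ and $r_y$ under the group are in hand.

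First I would record the two invariances that hold on the generating-function domains. On $\Delta_x$ the function $r_x(\o)=K(x(\o),0)Q(x(\o),0)$ of \eqref{rxryrxry} depends on $\o$ only through $x(\o)$; since $x(\xi\o)=x(\o)$ and $x(\o+\o_1)=x(\o)$ (the latter by the $\o_1$-periodicity of the elliptic function $x$), this immediately gives \eqref{xietabad} and \eqref{buzz} for $r_x$ on $\Delta_x$. The symmetric argument on $\Delta_y$, using $y(\eta\o)=y(\o)$ and the $\o_1$-periodicity of $y$, yields the corresponding statements for $r_y$.

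Next I would derive the shift relations \eqref{cont1}. Writing \eqref{kze} at $\o$ and at $\eta\o$ for $\o\in\Delta_x\cap\Delta_y$, and using $y(\eta\o)=y(\o)$ (hence $r_y(\eta\o)=r_y(\o)$), the subtraction cancels $r_y$ and the constant $K(0,0)Q(0,0)$, leaving
\[ r_x(\eta\o)-r_x(\o)=y(\o)\bigl[x(\eta\o)-x(\o)\bigr]. \]
As $\eta\o=-\o+\o_1+\o_2+\o_3$, the right-hand side is exactly $f_x(\o)$ from \eqref{fxfy}; combining this with $r_x(\xi\o)=r_x(\o)$ and $\xi\eta\o=\o-\o_3$ from \eqref{hatxi} rewrites it as $r_x(\o-\o_3)=r_x(\o)+f_x(\o)$. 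The second relation $r_y(\o+\o_3)=r_y(\o)+f_y(\o)$ would follow identically, evaluating \eqref{kze} at $\o$ and $\xi\o$, using $x(\xi\o)=x(\o)$, and then $\eta\xi\o=\o+\o_3$.

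Finally I would use the shift relations to continue $r_x,r_y$ to the whole plane. Reading the first relation as $r_x(\o)=r_x(\o-\o_3)-f_x(\o)$ expresses $r_x$ on $\Delta+\o_3$ through its known values on $\Delta$ corrected by the explicit meromorphic function $f_x$; iterating in both directions along $\o_3{\bf Z}$ and invoking ${\bf C}_\o=\cup_{n\in{\bf Z}}(\Delta+n\o_3)$ from \eqref{delta} extends $r_x$ (and likewise $r_y$) meromorphically to ${\bf C}_\o$. Each identity \eqref{sqs2}, \eqref{cont1}, \eqref{xietabad}, \eqref{buzz} then holds on a non-empty open set, so the identity theorem forces it to hold everywhere. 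The main obstacle I anticipate is the coherence of this continuation on the overlaps $(\Delta+n\o_3)\cap(\Delta+(n+1)\o_3)$: one must check that the shift relation never assigns conflicting values. This is where the geometry matters --- the inequality $\o_3=3\o_2/4<\o_2$ from \eqref{kn} guarantees that $\Delta_x\cap\Delta_y$ and the successive translates of $\Delta$ overlap in genuine open strips (cf.\ Figure~\ref{The_fundamental_parallelogram}), on which the two candidate definitions agree by construction; verifying these non-emptiness and overlap statements is the only non-formal ingredient, everything else being a direct manipulation of \eqref{kze} and \eqref{hatxieta}.
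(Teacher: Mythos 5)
Your proposal follows essentially the same route as the paper's own (sketched) proof: it derives \eqref{xietabad} and \eqref{buzz} from the invariance of $x(\o)$ and $y(\o)$ under $\xi$, $\eta$ and $\o_1$-translation, obtains \eqref{cont1} by subtracting two instances of the functional equation \eqref{kze} at a point and at its image under a Galois automorphism (you use the pair $(\o,\eta\o)$ for $r_x$ where the paper uses $(\o,\xi\o)$ for $r_y$ --- a mirror image of the same computation), and then continues along the $\o_3{\bf Z}$-translates of $\Delta$. The paper itself only sketches this argument and defers the details --- in particular the coherence of the continuation, which you correctly single out as the one delicate point --- to \cite{KRIHES}.
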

\begin{proof}[Sketch of the proof]
We shall not prove Lemma \ref{thm2} above in full details (for that
we refer to the proof of \cite[Theorem 4]{KRIHES}). However, we give
some intuitions on the above identities. First, \eqref{sqs2} is
simply the translation of the functional equation
\eqref{functional_equation}. Second, \eqref{xietabad} comes from the
fact that $x(\o)$ (resp.\ $y(\o)$) is invariant by $\xi$ (resp.\
$\eta$). Equation \eqref{buzz} follows from the $\o_1$-periodicity
of the functions $x(\o)$ and $y(\o)$. Let us give some more details
for \eqref{cont1}. Evaluate \eqref{sqs2} at $\o$ and $\xi\o$, and
make the subtraction of the two identities so obtained. We deduce
that $r_y(\xi\o)-r_y(\o)=x(\o)[y(\xi\o)-y(\o)]$. We conclude by
\eqref{xietabad}, since $r_y(\xi\o)=r_y(\eta\xi\o)=r_y(\o+\o_3)$,
where the last equality follows from \eqref{hatxi}.
\end{proof}

\section{Generating functions in terms of Weierstrass zeta functions}
\label{sec:gf_zeta}

\subsection*{Statements of results} The aim of this section is to prove that
the generating function for Gessel walks can be expressed in terms of the
Weierstrass zeta function (Theorems~\ref{thm:first_Gessel_simplified}
and~\ref{thm:Gessel-bi} below). To formulate them, we need to recall some
notation: let $\o_1,\o_2$ be the periods defined in
\eqref{expression_omega_1_2}, and let $\zeta_{1,3}$ be the Weierstrass zeta
function with periods $\o_1,3\o_2$, see Appendix~\ref{appendix-elliptic} for
its definition and some of its properties. We shall prove the following
results:

\begin{thm}
\label{thm:first_Gessel_simplified}
{For any $z\in(0,1/4)$} we have
\begin{align}
\label{eq:first_Gessel_simplified}
     &Q(0,0;z)=\\&\frac{\zeta_{1,3}(\o_2/4)-3\zeta_{1,3}(2\o_2/4)+2\zeta_{1,3}(3\o_2/4)+3\zeta_{1,3}(4\o_2/4)-5\zeta_{1,3}(5\o_2/4)+2\zeta_{1,3}(6\o_2/4)}{2z^2}.\nonumber
\end{align}
\end{thm}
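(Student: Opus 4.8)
The plan is to obtain $Q(0,0;z)$ by evaluating an explicit $\zeta$-expression for $r_x(\omega)$ at the point where $x(\omega)=0$. The strategy rests entirely on Lemma~\ref{thm2}: the shift identity $r_x(\omega-\omega_3)=r_x(\omega)+f_x(\omega)$, combined with the consequence (announced in item~\ref{II} of the introduction) that the fourfold sum $f_x(\omega)+f_x(\omega+\omega_3)+f_x(\omega+2\omega_3)+f_x(\omega+3\omega_3)$ vanishes, forces $r_x$ to be elliptic with periods $(\omega_1,4\omega_3)=(\omega_1,3\omega_2)$ (using \eqref{kn}). So the first step is to establish that $r_x$ is an elliptic function for the lattice $\omega_1{\bf Z}+3\omega_2{\bf Z}$, and to locate all its poles in a fundamental domain together with their principal parts. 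The shift identity lets me track poles explicitly: the poles of $r_x$ come from the poles of the data $f_x$ (hence of $x(\omega)$ and $y(\omega)$) at the translates of the branch-point preimages $\omega_{x_i},\omega_{y_i}$, shifted by multiples of $\omega_3=3\omega_2/4$. The points $k\omega_2/4$ for $k=1,\dots,6$ appearing in the statement are precisely these relevant nodes inside a period cell of length $3\omega_2$.

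Once the poles and principal parts of $r_x$ are in hand, I would write down $r_x(\omega)$ as a finite combination of Weierstrass $\zeta_{1,3}$-functions (and possibly $\wp_{1,3}$ for double poles) translated to the pole locations, plus a constant. This is the standard representation: an elliptic function with prescribed simple poles and residues is, up to an additive constant, $\sum_k c_k\,\zeta_{1,3}(\omega-a_k)$ where $\sum_k c_k=0$ (the residue-sum condition being exactly what guarantees periodicity of the $\zeta$-combination). The combinatorial GF origin of $r_x$ pins down the residues $c_k$ and hence the coefficients $1,-3,2,3,-5,2$ seen in \eqref{eq:first_Gessel_simplified}, and the factor $1/(2z^2)$ should emerge from the residues of $f_x$, which involve $y(\omega)$ and the factor $K(x(\omega),0)=a(x)\cdots$ carrying the $z$-dependence. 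The evaluation point is $x(\omega)=0$, i.e.\ the specific $\omega_0$ with $x(\omega_0)=0$; I expect $\omega_0$ to be an eighth-period-type point (consistent with $k\omega_2/4$ arguments appearing after translation), and then $K(0,0)Q(0,0)$ is recovered from $r_x(\omega_0)$ via \eqref{rxryrxry}, since $r_x(\omega_0)=K(x(\omega_0),0)Q(x(\omega_0),0)$ at $x=0$, after dividing out the explicit prefactor $K(0,0)=z$ (or its square, producing $z^2$).

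The main obstacle I anticipate is twofold. First, correctly computing the residues of $r_x$ at each pole: this requires knowing the local behavior of $x(\omega),y(\omega)$ near the branch preimages (where $x$ or $y$ has a double value, so $\wp$ is stationary and one gets square-root-type local structure), and then propagating these through the shift identity four times while keeping careful track of which residues survive, cancel, or combine. This bookkeeping is where the integer coefficients $1,-3,2,3,-5,2$ are genuinely determined, and getting their signs and multiplicities right is delicate. Second, after assembling the $\zeta_{1,3}$ representation, one must perform the evaluation at $x(\omega)=0$ and simplify; the quasi-periodicity of $\zeta_{1,3}$ (it is not periodic, only quasi-periodic, picking up additive constants $\eta_i$ under lattice shifts) means I must arrange all arguments within one period cell and check that the spurious quasi-period contributions cancel, which they will precisely because the residue coefficients sum to zero: $1-3+2+3-5+2=0$. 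I would verify this cancellation as a consistency check. Everything else—the algebraic manipulation of $a(x),b(x),d(x)$, the explicit values $\wp(\omega_{x_i})$, and the final collection of terms—is routine given the setup in Section~\ref{sec:mero}.
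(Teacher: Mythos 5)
Your overall strategy (ellipticity of $r_x$ with periods $\o_1,3\o_2$, pole bookkeeping via the shift identity, then a $\zeta_{1,3}$-representation) is exactly the route the paper takes to establish Theorem~\ref{thm:Gessel-bi}, on which Theorem~\ref{thm:first_Gessel_simplified} rests. But your final step contains a genuine gap: you propose to recover $zQ(0,0)$ by a \emph{single} evaluation of the $\zeta$-representation at the point $\o_0$ where $x(\o_0)=0$. The representation \eqref{eq:expression_elliptic_zeta} determines an elliptic function only up to an additive constant $c$, and for $r_x$ (or $r_y$) there is no independent closed-form value available at any point: the only points where the lifted GF is known exactly are precisely the zeros of $x(\o)$ (resp.\ $y(\o)$), where it equals the unknown $zQ(0,0)$ itself. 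Indeed the paper remarks after Theorem~\ref{thm:Gessel-bi} that $c$ equals $zQ(0,0)-\widehat\zeta_{1,3}(7\o_2/8)$ — so your single evaluation is circular: it yields $zQ(0,0)=c+(\text{known }\zeta\text{-values})$ with $c$ undetermined.

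The missing idea is to use \emph{two} evaluations and take their difference so that $c$ cancels. The paper evaluates $r_y$ at $\o_0^y=7\o_2/8$ (the zero of $y$, where $r_y=K(0,0)Q(0,0)=zQ(0,0)$) and at $5\o_2/8$ (the zero of $x$, lying in $\Delta_x$), and uses the functional equation \eqref{sqs2} in the form $r_x(\o)=x(\o)y(\o)-r_y(\o)+K(0,0)Q(0,0)$ at $\o=5\o_2/8$: since $r_x(5\o_2/8)=zQ(0,0)$ and $x(5\o_2/8)y(5\o_2/8)=0$, one gets $zQ(0,0)=r_y(7\o_2/8)-r_y(5\o_2/8)$, a difference in which the constant of Theorem~\ref{thm:Gessel-bi} disappears. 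The sixteen resulting $\zeta_{1,3}$-values are then reduced to the six in \eqref{eq:first_Gessel_simplified} by oddness and the quasi-periodicity \ref{half_period_translated_zeta} (the quasi-period contributions cancel because the residues sum to zero, as you correctly anticipated). Without some such pairing of the two liftings $r_x$ and $r_y$ through \eqref{sqs2}, your plan cannot close.
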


\begin{thm}
\label{thm:Gessel-bi}
We have, for all $\o\in{\bf C}$,
\begin{align}
     r_y(\o) = c\, +&\frac{1}{2z}\zeta_{1,3}(\o-(1/8)\o_2)-\frac{1}{2z}\zeta_{1,3}(\o-(3/8)\o_2)\nonumber\\
                      +&\frac{1}{2z}\zeta_{1,3}(\o-(1+3/8)\o_2)-\frac{1}{2z}\zeta_{1,3}(\o-(1+5/8)\o_2)\nonumber\\
                      -&\frac{1}{2z}\zeta_{1,3}(\o-(1+7/8)\o_2)+\frac{1}{z}\zeta_{1,3}(\o-(2+1/8)\o_2)\nonumber\\
                      -&\frac{1}{z}\zeta_{1,3}(\o-(2+5/8)\o_2)+\frac{1}{2z}\zeta_{1,3}(\o-(2+7/8)\o_2),\label{zeta}
\end{align}
where $c$ is a constant (depending only on $z$).
\end{thm}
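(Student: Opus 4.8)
The plan is to identify $r_y$ as an elliptic function with periods $(\omega_1,3\omega_2)$ and then reconstruct it from its principal parts, exploiting that an elliptic function is determined up to an additive constant by the positions of its poles together with the associated principal parts. Concretely, once I know that $r_y$ has only simple poles, at prescribed points $a_i$ with residues $c_i$ (necessarily satisfying $\sum_i c_i=0$), the function $\sum_i c_i\,\zeta_{1,3}(\omega-a_i)$ is genuinely $(\omega_1,3\omega_2)$-elliptic---the quasi-periods of the individual $\zeta_{1,3}$ cancel precisely because $\sum_i c_i=0$---and shares every principal part of $r_y$. Hence $r_y-\sum_i c_i\,\zeta_{1,3}(\omega-a_i)$ is an everywhere-holomorphic elliptic function, i.e.\ a constant $c$, and matching the data $(a_i,c_i)$ with \eqref{zeta} yields the theorem. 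The value of $c$ is not constrained by this argument, in agreement with the statement.

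First I would establish the two-period ellipticity. The $\omega_1$-periodicity is given for free by \eqref{buzz}. For the second period I iterate the shift relation \eqref{cont1}, $r_y(\omega+\omega_3)=r_y(\omega)+f_y(\omega)$, four times:
\begin{equation*}
r_y(\omega+4\omega_3)=r_y(\omega)+\sum_{k=0}^{3}f_y(\omega+k\omega_3).
\end{equation*}
By \eqref{kn} we have $4\omega_3=3\omega_2$, so it remains to prove the orbit-sum identity $\sum_{k=0}^{3}f_y(\omega+k\omega_3)=0$, the exact analogue for $f_y$ of the vanishing noted in the introduction for $f_x$ (equivalent to the vanishing of the orbit sum of Gessel's walks, cf.\ \cite[Section 4.2]{BMM}). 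This is a statement purely about the elliptic functions $x,y$ on ${\bf C}$, checkable from the affine expressions \eqref{hatxieta}--\eqref{hatxi} for the group together with the Galois relations; it is here that the finiteness of the group (order $8$) enters. We conclude $r_y(\omega+3\omega_2)=r_y(\omega)$, which also explains why it is exactly $\zeta_{1,3}$ (the zeta function with periods $\omega_1,3\omega_2$) that appears in \eqref{zeta}.

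Next I would locate the poles and compute the residues. A convenient preliminary simplification rewrites $f_y$ using $\xi\omega=-\omega+\omega_1+\omega_2$ (see \eqref{hatxieta}) and the Galois relation $y(\xi\omega)=1/(x^2(\omega)y(\omega))$, turning \eqref{fxfy} into
\begin{equation*}
f_y(\omega)=\frac{1}{x(\omega)y(\omega)}-x(\omega)y(\omega).
\end{equation*}
Since $r_y$ is holomorphic on $\Delta_y$ (there $r_y=z(y(\omega)+1)Q(0,y(\omega))$ with $|y(\omega)|<1$), every pole is produced by the continuation \eqref{cont1}, and therefore sits among the $\omega_3$-translates of the poles of $f_y$, i.e.\ of the zeros and poles of the $(\omega_1,\omega_2)$-elliptic function $x(\omega)y(\omega)$. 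I would first pin down these finitely many points and the residues of $f_y$ there, checking in passing that they are all simple---which is what ultimately forbids any $\wp$-term in \eqref{zeta}---and then propagate them through the four shifts $\omega\mapsto\omega+k\omega_3$ needed to sweep out one $(\omega_1,3\omega_2)$-period, accumulating residues whenever two translated poles coincide. The residues $\pm 1/z$ in \eqref{zeta} should arise exactly from two such coincident contributions $\pm 1/(2z)$.

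The main obstacle is precisely this bookkeeping: correctly identifying the zeros and poles of $x(\omega)y(\omega)$ (using that on the curve $x=0$ forces $y=-1$, while $x=\infty$ forces $y\in\{0,-1\}$, and symmetrically for $y$), computing the eight residues, and tracking which translates fall inside the chosen fundamental domain $\omega_1[0,1)+\omega_2[0,3)$. Everything else---the two-period ellipticity and the final Liouville-type comparison---is then routine. As a consistency check I note that the coefficients in \eqref{zeta} do sum to zero, $\tfrac{1}{2z}(1-1+1-1-1+2-2+1)=0$, as they must for $\sum_i c_i\,\zeta_{1,3}(\omega-a_i)$ to be elliptic.
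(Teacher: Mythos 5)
Your overall strategy coincides with the paper's: prove that $r_y$ is $(\omega_1,3\omega_2)$-elliptic via the vanishing of the orbit sum $\sum_{k=0}^{3}f_y(\omega+k\omega_3)$, locate its simple poles and residues in a fundamental domain, and invoke the standard $\zeta$-representation of an elliptic function with prescribed simple principal parts. There is, however, a concrete gap in your pole-determination step. You assert that, since $r_y$ is holomorphic on $\Delta_y$, ``every pole is produced by the continuation \eqref{cont1}.'' But $\Delta_y$ has width $\omega_2/2$, strictly smaller than the shift $\omega_3=3\omega_2/4$, so the translates $\Delta_y+n\omega_3$ do \emph{not} cover ${\bf C}$: modulo $3\omega_2$ they leave the four gap strips $\omega_2[3/8,5/8)$, $\omega_2[9/8,11/8)$, $\omega_2[15/8,17/8)$, $\omega_2[21/8,23/8)$, and three of the eight poles in \eqref{zeta} --- at $3\omega_2/8$, $15\omega_2/8$ and $21\omega_2/8$ --- lie precisely there. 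On this orbit the recursion \eqref{cont1} closes up on itself after four steps (because $4\omega_3=3\omega_2$), and since the residues of $f_y$ at $\omega_2/8,3\omega_2/8,5\omega_2/8,7\omega_2/8$ sum to zero, the resulting linear system for the four residues is consistent but underdetermined: one free parameter survives, so your scheme cannot pin down the residue $-1/(2z)$ at $3\omega_2/8$, nor the derived ones at $15\omega_2/8$ and $21\omega_2/8$ (in particular the doubled residue $-1/z$).

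The missing ingredient is the relation \eqref{rDrD} (equivalently \eqref{sqs2}): on $\Delta_x\setminus\Delta_y$, which contains $3\omega_2/8$, one has $r_y=-r_x+K(0,0)Q(0,0)+x(\omega)y(\omega)$ with $r_x$ pole-free there, so the pole of $r_y$ at $3\omega_2/8$ comes from the term $x(\omega)y(\omega)$ and its residue is computed directly; in your rewriting $f_y=1/(xy)-xy$ it equals the residue of $-f_y$ there, namely $-1/(2z)$, since $1/(xy)$ vanishes at a pole of $xy$. This is exactly how the paper anchors the computation: its decomposition \eqref{decompo} includes the extra initial domain $\omega_2[2/8,5/8)\subset\Delta_x$ treated via \eqref{rDrD}, after which the forward and backward shifts \eqref{sdgh} propagate everything as you describe. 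With this one addition your plan becomes the paper's proof; the remaining differences (your form $f_y=1/(xy)-xy$ versus the paper's $f_y=\tfrac{1}{2z}\,x'/x$, and your direct verification of the orbit-sum identity from the group action versus the residue argument in Lemma~\ref{lem:elliptic}) are cosmetic.
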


Note that the constant $c$ in the statement of Theorem~\ref{thm:Gessel-bi} can
be made explicit. In fact, the point $\o_0^y=7\o_2/8 \in \Delta_y$ is such
that $y(\o_0^y)=0$ (see Lemma \ref{lem:lemKRG} below). Hence the value of
$r_y(7\o_2/8)=K(0,y(7\o_2/8))Q(0,y(7\o_2/8 ))$ is equal to
$K(0,0)Q(0,0)=zQ(0,0)$ which is given by
Theorem~\ref{thm:first_Gessel_simplified}. Thus $c$ is equal to
$zQ(0,0)-\widehat \zeta_{1,3}(7\o_2/8)$, where $\widehat \zeta_{1,3}(\o)$ is
the sum of the eight $\zeta$-functions in \eqref{zeta}.

An expression similar to \eqref{zeta} holds for $r_x(\o)$ (for a different
constant $c$). There are two ways to obtain this expression: the first one
consists in doing the same analysis as for $r_y$; the second one is to express
$r_x$ from equation \eqref{sqs2} in terms of $r_y$ and to
apply~Theorems~\ref{thm:first_Gessel_simplified} and~\ref{thm:Gessel-bi}. In
terms of $\zeta$-functions, the results of both approaches are rigorously the
same.

Theorems \ref{thm:first_Gessel_simplified} and \ref{thm:Gessel-bi} are crucial
for the remainder of the article. We shall explain in
Section~\ref{sec:proof_A} how to deduce from Theorem
\ref{thm:first_Gessel_simplified} a proof of Gessel's conjecture (Problem
\ref{enumi:problem_A}). Then, in Section \ref{sec:proof_B}, we deduce from
Theorem~\ref{thm:Gessel-bi} the algebraicity of $Q(0,y;z)$ and $Q(x,0;z)$.
Using the functional equation~\eqref{functional_equation}, we shall then
obtain the algebraicity of the complete generating function $Q(x,y;z)$
(Problem \ref{enumi:problem_B}).

\subsection*{Preliminary results}
The poles of the function $f_y$ defined in equation~\eqref{fxfy} will play a crucial role in our analysis.
They are given in the lemma hereafter.

\begin{lem}
\label{lemma:f_y_gessel}
In the fundamental parallelogram $\o_1[0,1)+\o_2[0,1)$, the function $f_y$ has poles at $\o_2/8$, $3\o_2/8$, $5\o_2/8$ and $7\o_2/8$. These poles are simple, with residues equal to $-1/(2z)$, $1/(2z)$, $1/(2z)$ and $-1/(2z)$, respectively.
\end{lem}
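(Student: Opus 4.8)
The plan is to locate and classify the poles of $f_y(\o)=x(\o)[y(-\o+\o_1+\o_2)-y(\o)]$ by understanding separately the polar behaviour of each factor. The function $y(\o)$ is elliptic of order $2$ with periods $\o_1,\o_2$, so within the fundamental parallelogram its two poles (counted with multiplicity) are the two points where $y(\o)=\infty$; from the uniformization \eqref{eq:uniformization_Gessel} these occur where $\wp(\o)-d''(x_4)/6$ vanishes or where $x(\o)=\infty$, and one reads them off as the shifts $\o_{y_4}=\o_{x_4}+\o_3/2=\o_3/2=3\o_2/8$ and its $\o_{y_1}$-type partner. The reflected argument $-\o+\o_1+\o_2$ is exactly the Galois involution $\xi\o$ of \eqref{hatxieta}, so $y(-\o+\o_1+\o_2)=y(\xi\o)=1/(x^2(\o)y(\o))$; this rewriting is the cleanest route, since it expresses the whole bracket in terms of $x(\o)$ and $y(\o)$ alone. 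First I would substitute $y(\xi\o)=1/(x^2(\o)y(\o))$ and write
\begin{equation*}
     f_y(\o)=x(\o)\left[\frac{1}{x^2(\o)y(\o)}-y(\o)\right]=\frac{1}{x(\o)y(\o)}-x(\o)y(\o),
\end{equation*}
which reduces the problem to finding the poles of the two elliptic functions $x(\o)y(\o)$ and $1/(x(\o)y(\o))=x(\o)y(\eta\o)\cdot\text{(unit)}$, both of which are rational in $\wp,\wp'$ and hence easy to analyze.

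Next I would enumerate the candidate poles. The product $x(\o)y(\o)$ can blow up only where $x(\o)=\infty$ or $y(\o)=\infty$; symmetrically $1/(x(\o)y(\o))$ blows up where $x(\o)=0$ or $y(\o)=0$. Using the known branch-point positions $\o_{x_i},\o_{y_i}$ from Figure~\ref{The_fundamental_parallelogram} together with the locations of the zeros of $x$ and $y$ (the point $\o_0^y=7\o_2/8$ with $y=0$ is already recorded in the excerpt, and by the $\xi,\eta$-symmetry the remaining zeros and poles sit at the analogous quarter-period shifts), I expect the four surviving singularities inside $\o_1[0,1)+\o_2[0,1)$ to land precisely at the odd eighths $\o_2/8,3\o_2/8,5\o_2/8,7\o_2/8$, with the $\o_1$-coordinate fixed by $\o_1$-periodicity \eqref{buzz}. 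Because $x(\o)$ and $y(\o)$ have only simple zeros and simple poles away from the branch points (the branch points being where the two sheets meet, hence double points of $\wp$ but not poles of $f_y$), each resulting pole of $f_y$ is simple.

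For the residues, at each such point exactly one of the two terms $1/(xy)$ or $xy$ is singular, and I would compute the residue by a local expansion: near a simple zero $\o_\ast$ of, say, $y$, write $y(\o)\approx y'(\o_\ast)(\o-\o_\ast)$ and $x(\o_\ast)$ finite and nonzero, so that $\operatorname{Res}_{\o_\ast}\bigl(1/(x(\o)y(\o))\bigr)=1/\bigl(x(\o_\ast)y'(\o_\ast)\bigr)$, and similarly a pole of $y$ contributes to the $xy$ term. The arithmetic should collapse, after using $K(x(\o),y(\o))=0$ and the explicit coefficients $a(x)=zx^2$, $b(x)=zx^2-x+z$, to the clean values $\mp 1/(2z)$ with the alternating sign pattern $-,+,+,-$ claimed in the statement. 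The main obstacle I anticipate is bookkeeping rather than conceptual: one must correctly match each of the four points to the right singular term and get the signs and the factor $1/(2z)$ from the residue computation consistent, which requires knowing the precise derivatives $x'(\o_\ast),y'(\o_\ast)$ at the quarter/eighth-period points. This is most safely done by exploiting the functional identities $x(\xi\o)=x(\o)$, $y(\eta\o)=y(\o)$ and the relation \eqref{kn} $\o_3=3\o_2/4$ to relate the four points by the group action, so that a single residue computation propagates to the other three with controlled sign changes, rather than computing all four $\wp$-evaluations from scratch.
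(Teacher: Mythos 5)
Your setup is sound and your identity $f_y(\o)=x(\o)\bigl[1/(x^2(\o)y(\o))-y(\o)\bigr]=1/(x(\o)y(\o))-x(\o)y(\o)$ is correct (it is exactly the statement that $y(\o)$ and $y(\xi\o)$ are the two roots of $a(x)y^2+b(x)y+c(x)=0$ with $c/a=1/x^2$), and combined with Lemma~\ref{lem:lemKRG} it does locate the four simple poles at the odd eighths. The genuine gap is the residues: you assert that "the arithmetic should collapse" to $\mp 1/(2z)$, but this is precisely the quantitative content of the lemma, and your decomposition gives you no mechanism for it. For instance, at $\o_2/8$ one has $y(\o_2/8)=-1$ and $f_y\sim -x(\o)y(\o)\sim x(\o)$, so the residue of $f_y$ there equals the residue of $x$ at its simple pole -- a number that depends on $d'(x_4)$ and $\wp'(\o_2/8)$ and is not determined by anything you have written down. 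The residue theorem for the elliptic functions $xy$ and $1/(xy)$ explains the sign pattern within each pair of poles, and the group action relates the four points, but neither fixes the common value $1/(2z)$; at least one honest local computation is still required and is missing.

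The paper closes this gap with one further algebraic step that you stop just short of: the difference of the two roots of the quadratic is $\pm\sqrt{d(x)}/a(x)$, which by the uniformization \eqref{eq:uniformization_Gessel} equals $x'(\o)/(2zx(\o))$ up to the factor $x(\o)$, so that
\begin{equation*}
f_y(\o)=\frac{1}{2z}\,\frac{x'(\o)}{x(\o)}.
\end{equation*}
Recognizing $f_y$ as $\tfrac{1}{2z}$ times the logarithmic derivative of $x$ makes the residues automatic: at a simple zero (resp.\ simple pole) of $x$ the residue of $x'/x$ is $+1$ (resp.\ $-1$), with no need to evaluate $x'$, $y$, or any $\wp$-value at the eighth-period points. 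I would recommend replacing your $1/(xy)-xy$ decomposition by this logarithmic-derivative form (or supplementing it with the explicit residue computation at one of the four points, propagated to the others as you suggest); as written, the proof establishes the location and simplicity of the poles but not the values $\pm 1/(2z)$.
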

Before proving Lemma \ref{lemma:f_y_gessel}, we recall from \cite[Lemma 28]{KRG} the following result, dealing with the zeros and poles of $x(\o)$ and $y(\o)$:
\begin{lem}[\cite{KRG}]
\label{lem:lemKRG} In the fundamental parallelogram
$\o_1[0,1)+\o_2[0,1)$, the only poles of $x$ (of order one) are at
$\o_2/8$, $7\o_2/8$, and its only zeros (of order one) are at
$3\o_2/8$, $5\o_2/8$. The only pole of $y$ (of order two) is at
$3\o_2/8$, and its only zero (of order two) is at $7\o_2/8$.
\end{lem}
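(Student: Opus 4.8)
The plan is to prove Lemma~\ref{lemma:f_y_gessel} directly from the definition $f_y(\o)=x(\o)[y(-\o+\o_1+\o_2)-y(\o)]$ given in~\eqref{fxfy}, using the pole/zero data for $x$ and $y$ supplied by Lemma~\ref{lem:lemKRG}, which I may assume. The idea is that the potential poles of $f_y$ in the fundamental parallelogram $\o_1[0,1)+\o_2[0,1)$ can come only from two sources: the poles of the prefactor $x(\o)$, and the poles of the bracketed difference $y(-\o+\o_1+\o_2)-y(\o)$. I would first locate all these candidate singularities, then at each one compute the order and the residue, carefully checking for cancellations (a pole of $x$ might be killed if the bracket vanishes there, and conversely).

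First I would list the relevant singular points. By Lemma~\ref{lem:lemKRG}, $x(\o)$ has simple poles at $\o_2/8$ and $7\o_2/8$, and simple zeros at $3\o_2/8$ and $5\o_2/8$; the function $y(\o)$ has a double pole at $3\o_2/8$ and a double zero at $7\o_2/8$. The term $y(-\o+\o_1+\o_2)$ is obtained from $y$ by the affine change $\o\mapsto -\o+\o_1+\o_2$; since this map is (up to the lattice) the Galois involution $\xi$ acting on $\o$, and since $x(\o)$ is $\xi$-invariant while $y$ is not, I should determine where $-\o+\o_1+\o_2\equiv 3\o_2/8\pmod{\o_1,\o_2}$, i.e.\ the location of the double pole of $y(-\o+\o_1+\o_2)$. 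Solving $-\o+\o_1+\o_2=3\o_2/8$ modulo the lattice gives $\o=\o_1+\o_2-3\o_2/8=\o_1+5\o_2/8$, which (reducing by $\o_1$) sits at horizontal coordinate $5\o_2/8$. So within the fundamental parallelogram the candidate poles of $f_y$ are at $\o_2/8$ and $7\o_2/8$ (from $x$) and at $5\o_2/8$ and (from the double pole of the shifted $y$) again possibly $5\o_2/8$; I also record $3\o_2/8$, where $x$ vanishes but $y(\o)$ has a double pole, so the product $x(\o)\,y(\o)$ could leave a simple pole.

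Next I would compute, point by point, the Laurent expansion of $f_y$ and read off the residue. At $\o_2/8$ and $7\o_2/8$, the bracketed factor is finite (neither $y(\o)$ nor the shifted $y$ is singular there, as one checks from the list above), so the residue is $x$'s residue times the finite bracket value; the claim is that these give $-1/(2z)$ and $-1/(2z)$. At $3\o_2/8$, the simple zero of $x$ meets the double pole of $y(\o)$ in the $-x(\o)y(\o)$ term, producing a simple pole; the shifted term is regular there, so the residue is $-1$ times the residue of $x(\o)y(\o)$, which the claim asserts equals $1/(2z)$. At $5\o_2/8$, the simple zero of $x$ meets the double pole of the shifted $y$ in $x(\o)\,y(-\o+\o_1+\o_2)$, again yielding a simple pole with residue $1/(2z)$. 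The actual residue values require explicit local expansions of $x(\o)$ and $y(\o)$ near these half-period-type points; rather than manipulating $\wp,\wp'$ blindly, I would exploit the explicit uniformization~\eqref{eq:uniformization_Gessel} together with the evaluations from Lemma~\ref{lem:lemKRG} to pin down the leading Laurent coefficients, and use the identity $K(x(\o),y(\o))=0$ from~\eqref{kxy0} to relate the residues of $x$ and of $xy$ through the coefficients $a(x)=zx^2$, $b(x)=zx^2-x+z$.

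The main obstacle I anticipate is the bookkeeping at $3\o_2/8$ and $5\o_2/8$, where a \emph{simple} zero of $x$ cancels against a \emph{double} pole of $y$ (or of its $\xi$-shift): one must show the product is genuinely a simple pole (not a double pole, and not regular) and extract the residue correctly, which forces computing \emph{two} terms in the local expansions rather than one. The cleanest route is probably to use $f_y=x(\o)\,y(\xi\o)-x(\o)\,y(\o)$ and note that $x(\o)y(\o)$ and $x(\o)y(\xi\o)=x(\xi\o)y(\xi\o)$ are, via~\eqref{kxy0}, expressible through the symmetric functions of the two $y$-roots of $K(x,\cdot)=0$, namely their sum $-b/a$ and product $c/a$; this should let me compute the residues rationally in $z$ by evaluating $-b(x)/a(x)$ and $c(x)/a(x)$ and their derivatives at the appropriate branch data, thereby confirming all four residues $-1/(2z),1/(2z),1/(2z),-1/(2z)$ and the simplicity of each pole.
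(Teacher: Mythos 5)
There is a genuine gap, and it is structural rather than computational: you have proved the wrong statement. The lemma you were asked to establish is Lemma~\ref{lem:lemKRG} itself---the location and order of the poles and zeros of $x(\o)$ and $y(\o)$ in the fundamental parallelogram---but your proposal opens by declaring that you will prove Lemma~\ref{lemma:f_y_gessel} ``using the pole/zero data for $x$ and $y$ supplied by Lemma~\ref{lem:lemKRG}, which I may assume.'' Everything that follows (the bookkeeping of candidate singularities of $f_y$ at $\o_2/8$, $3\o_2/8$, $5\o_2/8$, $7\o_2/8$, the cancellation analysis between the simple zeros of $x$ and the double poles of $y$ and its $\xi$-shift, the residue extraction via the symmetric functions $-b/a$ and $c/a$ of the two $y$-roots of $K(x,\cdot)=0$) is a perfectly reasonable plan for the \emph{downstream} lemma, and it is in fact close in spirit to how the paper derives Lemma~\ref{lemma:f_y_gessel}, via the identity $f_y(\o)=\frac{1}{2z}\,x'(\o)/x(\o)$. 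But as a proof of the statement at hand it is circular: no step in your proposal establishes \emph{why} the poles of $x$ sit at $\o_2/8$ and $7\o_2/8$, or why $y$ has a double pole at $3\o_2/8$, since those facts are taken as the input.

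To actually prove Lemma~\ref{lem:lemKRG} you must work from the uniformization~\eqref{eq:uniformization_Gessel}. From the first formula there, $x(\o)$ has a (simple) pole precisely where $\wp(\o)=d''(x_4)/6$, with $d$ as in~\eqref{dx}; so the substantive task is to verify numerically-in-$z$ that $\wp(\o_2/8)=d''(x_4)/6$ (and similarly at $7\o_2/8$), both sides being explicitly computable: the right-hand side directly from~\eqref{dx}, and the left-hand side from known special values such as $\wp(\o_2/4)=(1+4z^2)/3$ combined with the bisection formula~\ref{bisection}. This is exactly the route the paper sketches. Analogous evaluations of $\wp$ and $\wp'$ at the other rational multiples of $\o_2$ locate the zeros of $x$ and the pole and zero of $y$, and the stated orders (one for $x$, two for $y$) are read off from the local structure of~\eqref{eq:uniformization_Gessel}---e.g.\ the double pole of $y$ at $3\o_2/8$ arises because the denominator $(\wp(\o)-d''(x_4)/6)^{2}$ in the second formula vanishes to order two while the numerator $\wp'(\o)$ stays finite and nonzero there. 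None of this $\wp$-evaluation content appears in your proposal, so as written it cannot stand in for the paper's proof of this lemma.
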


\begin{proof}[Sketch of the proof of Lemma \ref{lem:lemKRG}]
Expressions for $x(\o)$ and $y(\o)$ are written down
in \eqref{eq:uniformization_Gessel}.
To show that $x$ has a pole of order $1$ at $\o_2/8$
 it is enough to prove that $\wp(\o_2/8)=d''(x_4)/6$,
  see again \eqref{eq:uniformization_Gessel}.
  Such computations follow from the fact that both quantities are
  known in terms of the variable $z$.
  This is clear for the right-hand side.
  For $\wp(\o_2/8)$ we can use $\wp(\o_2/4)=(1+4z^2)/3$
  (see the proof of Lemma \ref{lem:three_formulae})
   and then the bisection formula \ref{bisection}.
   We do not pursue the computations in more details.
   We would proceed similarly for the other poles and zeros.
\end{proof}

\begin{proof}[Proof of Lemma \ref{lemma:f_y_gessel}]
Using the definition \eqref{fxfy} of the function $f_y(\o)$ and
formulas~\eqref{eq:uniformization_Gessel}, we derive that
\begin{equation*}
     f_y(\o) = \frac{1}{2z}\frac{x'(\o)}{x(\o)}.
\end{equation*}
Indeed, we have
\begin{equation*}
     {f_y(\o)=x(\o)[y(-\o)-y(\o)]}=\frac{x(\o)}{2a(x(\o))}\left(-\frac{d'(x_{4})
          \wp'(\omega)}{(\wp(\omega)-
          d''(x_{4})/6)^{2}}\right)=\frac{1}{2z}\frac{x'(\o)}{x(\o)}.
\end{equation*}
Above, we have used the identities $y(-\o+\o_1+\o_2)=y(-\o)$, $x(-\o)=-x(\o)$, $\wp'(-\o)=-\wp'(\o)$ and $a(x)=z x^2$.

 Accordingly, if $x(\o)$ has a
simple zero (resp.\ a simple pole) at $\o_0$, then $f_y(\o)$ has a
simple pole at $\o_0$, with residue $1/(2z)$ (resp.\ $-1/(2z)$).
Lemma \ref{lemma:f_y_gessel} then follows from Lemma
\ref{lem:lemKRG}.
\end{proof}

The following lemma will shorten the proof of Theorem \ref{thm:Gessel-bi}.
\begin{lem}
\label{lem:elliptic}
The function $r_y$ is elliptic with periods $\o_1,3\o_2$.
\end{lem}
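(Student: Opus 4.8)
The plan is to establish the two required periodicities of $r_y$ separately. Periodicity in $\o_1$ is immediate: it is exactly equation \eqref{buzz} of Lemma \ref{thm2}, giving $r_y(\o+\o_1)=r_y(\o)$ for all $\o\in{\bf C}$. The real content lies in the second period. Since $\o_3=3\o_2/4$ by \eqref{kn}, we have $4\o_3=3\o_2$, so it suffices to prove $r_y(\o+4\o_3)=r_y(\o)$. Iterating the shift relation $r_y(\o+\o_3)=r_y(\o)+f_y(\o)$ of \eqref{cont1} four times yields
\begin{equation*}
r_y(\o+4\o_3)=r_y(\o)+\sum_{k=0}^{3}f_y(\o+k\o_3),
\end{equation*}
so the whole lemma reduces to the key identity $\sum_{k=0}^{3}f_y(\o+k\o_3)=0$.

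To prove this identity I would use the closed form $f_y(\o)=\frac{1}{2z}\,x'(\o)/x(\o)$ established in the proof of Lemma \ref{lemma:f_y_gessel}. With it, the sum becomes a logarithmic derivative,
\begin{equation*}
\sum_{k=0}^{3}f_y(\o+k\o_3)=\frac{1}{2z}\,\frac{d}{d\o}\log P(\o),\qquad P(\o)=\prod_{k=0}^{3}x(\o+k\o_3),
\end{equation*}
so it is enough to show that $P$ is constant. Now $P$ is meromorphic and, being a product of shifts of the $\o_1,\o_2$-elliptic function $x$, it is $\o_1$-periodic; moreover $P(\o+\o_3)=P(\o)$, because the new factor $x(\o+4\o_3)=x(\o+3\o_2)=x(\o)$ recycles the old one by the $\o_2$-periodicity of $x$. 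Hence $P$ is elliptic with periods $\o_1,\o_3$, and it will be constant as soon as it has no poles.

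The main point, and the only place where genuine computation is needed, is the cancellation of poles in $P$, which I would read off from Lemma \ref{lem:lemKRG}: in a fundamental parallelogram the simple poles of $x$ sit at $\o_2/8$ and $7\o_2/8$, and its simple zeros at $3\o_2/8$ and $5\o_2/8$. Writing $k\o_3=6k\cdot(\o_2/8)$ and reducing modulo $\o_2$, one checks that the poles of $P$ and the zeros of $P$ land, as multisets, on exactly $\{\o_2/8,3\o_2/8,5\o_2/8,7\o_2/8\}$, each point occurring twice. For instance, at $\o=\o_2/8$ the factors $k=0,1$ produce the two poles of $x$ (at $\o_2/8$ and $7\o_2/8$), while the factors $k=2,3$ produce its two zeros (at $5\o_2/8$ and $3\o_2/8$), so the two simple poles are cancelled by the two simple zeros; the identical pattern occurs at each of the four points. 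Thus $P$ is holomorphic and nowhere vanishing, hence constant, which forces $\sum_{k=0}^{3}f_y(\o+k\o_3)=0$ and completes the proof.

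This vanishing is precisely the statement that the orbit sum of Gessel's walks is zero, as anticipated in point \ref{II} of the introduction and in \cite[Section 4.2]{BMM}; everything beyond the pole-and-zero bookkeeping above is a formal consequence of Lemma \ref{thm2} together with the relation \eqref{kn}.
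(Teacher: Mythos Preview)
Your proof is correct and takes a genuinely different route from the paper's. Both arguments begin identically: $\o_1$-periodicity is \eqref{buzz}, and iterating \eqref{cont1} reduces $3\o_2$-periodicity to the vanishing of $\mathcal{O}(\o)=\sum_{k=0}^{3}f_y(\o+k\o_3)$. The divergence is in how this vanishing is shown.

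The paper works directly with $\mathcal{O}$: it checks, via the residues of $f_y$ listed in Lemma~\ref{lemma:f_y_gessel}, that $\mathcal{O}$ has no poles, hence is a constant $c$; it then needs a \emph{separate} argument to kill $c$, namely evaluating $r_y(\o+4\o_3)=r_y(\o)+c$ at $\o_{y_2}$ and $\o_{y_2}-4\o_3$ and invoking the symmetry $r_y(\eta\o)=r_y(\o)$ from \eqref{xietabad}. Your argument instead exploits the closed form $f_y=\frac{1}{2z}\,x'/x$ (from the proof of Lemma~\ref{lemma:f_y_gessel}) to recognise $\mathcal{O}$ as the logarithmic derivative $\frac{1}{2z}\,P'/P$ with $P(\o)=\prod_{k=0}^{3}x(\o+k\o_3)$; showing $P$ is a (nonzero) constant then forces $\mathcal{O}\equiv 0$ in one stroke, with no residual constant to eliminate. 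The bookkeeping you do on poles and zeros of $x$ at $\o_2/8,3\o_2/8,5\o_2/8,7\o_2/8$ is exactly equivalent to the paper's residue cancellation (it is the same data from Lemma~\ref{lem:lemKRG}, viewed multiplicatively rather than additively). What you gain is the automatic vanishing of the constant; what the paper's approach gains is that it does not rely on the special identity $f_y=\frac{1}{2z}\,x'/x$, which is particular to Gessel's step set, and so generalises more readily to other models (cf.\ Remark~\ref{rem:orbit}).
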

\begin{proof}
The function $r_y$ is meromorphic and $\o_1$-periodic due to \eqref{buzz}.
  Further, by Lemma \ref{thm2},
\begin{equation}
\label{ooo}
r_y(\o+4\o_3)-r_y(\o)=f_y(\o)+f_y(\o+\o_3)+f_y(\o+2\o_3)+f_y(\o+3\o_3), \qquad \forall \o \in {\bf C}.
\end{equation}
 We start by showing that
   the elliptic function
   \begin{equation*}
   \mathcal{O}(\o)=\sum_{k=0}^{3}f_y(\o+k\o_3)
   \end{equation*} has no poles on ${\bf
   C}$. As $\mathcal{O}(\o)$ is $(\o_1,\o_2)$-periodic,
     it suffices to verify this on the parallelogram $[0,\o_1)+[0,\o_2)$.
    Since $\mathcal{O}(\o)$ is also $\o_3$-periodic (this follows immediately
  from $4\o_3=3\o_2$ and the $\o_2$-periodicity of $f_y(\o)$), it is enough
   to check that the poles of $f_y(\o)$ are not those of~$\mathcal{O}(\o)$.
    The function $f_y(\o)$ has four poles in the main parallelogram, at
    $\o_2/8$, $3\o_2/8$, $5\o_2/8$ and $7\o_2/8$
    (Lemma~\ref{lemma:f_y_gessel}).
    Since $\mathcal{O}(\o)$ is also $\o_2-\o_3=\o_2/4$-periodic, 
    it remains to check that $\o_2/8$ is a removable singularity.
     This  is an elementary
       verification using the residues
     of $f_y(\o)$ at its poles, which are given in
     Lemma~\ref{lemma:f_y_gessel}.

      Hence, with property \ref{elliptic_poles_0}
    of Lemma \ref{lemma_properties_wp}, $\mathcal{O}(\o)$ must be
    a constant $c$, so that with \eqref{ooo}
    $r_y(\o+4\o_3)=r_y(\o)+c$ for all $\o \in {\bf C}$.
   In particular, evaluating the previous equality at $\o_{y_2}$ and
    $\o_{y_2}-4\o_3$ and summing the two identities so obtained gives
    $r_y(\o_{y_2}-4\o_3)+2c=r_y(\o_{y_2}+4\o_3)$.
   But in view of \eqref{xietabad},
   $r_y(\o_{y_2}-4\o_3)=r_y(\o_{y_2}+4\o_3)$,
   since $\eta\omega=-\omega+2\omega_{y_2}$, and then $c=0$.

        It follows that $r_y(\o)$ is also $4\o_3=3\o_2$-periodic,
     and thus elliptic with periods $\o_1,3\o_2$.
   \end{proof}

\begin{rem} 
\label{rem:orbit} 
Note that the fact that $\mathcal{O}(\o)$ is identically zero also follows
from  \cite[Proposition 10]{KRnew}, which gives an easy necessary and
sufficient condition for $\mathcal{O}(\o)$ to be zero (equivalently, for $r_x$ and $r_y$ to be elliptic, or for the orbit sum of Bousquet-M\'elou and Mishna to be zero): the poles of
$x(\o)$ and $y(\o)$ in the fundamental parallelogram should not be
poles of $\mathcal{O}(\o)$. In the case of Gessel's walks, by Lemma
\ref{lem:lemKRG},
   it is reduced to checking that the points $\o_2/8$,
   $3\o_2/8$ and $7\o_2/8$ are not poles of $\mathcal{O}(\o)$.
This is immediate  by Lemma~\ref{lemma:f_y_gessel}.

 Finally,
Lemma~\ref{lem:elliptic} is
 proved in \cite[Proposition 11]{KRIHES} as well, using the
representation of $\mathcal{O}(\o)$ as the so-called orbit-sum:
\begin{align*}
          \mathcal{O}(\o)=&\sum_{1\leq k\leq 4} (xy) (\omega +k \omega_3)-(xy) ( \eta (\o+k\o_3))\\
          =&\sum_{1\leq k\leq 4} (xy)(( \eta  \xi)^k \o)-(xy)(\xi ( \eta  \xi)^{k-1}\o)\\
          =&\sum_{\theta \in\langle \xi,\eta\rangle}(-1)^{\theta} xy(\theta(\omega)),
     \end{align*}
  where  $(-1)^{\theta}$ is the signature of $\theta$, i.e., $(-1)^{\theta}=(-1)^{\ell(\theta)}$, where $\ell(\theta)$ is the smallest $\ell$ for which we can
write $\theta = \theta_1\circ \cdots \circ \theta_\ell$, with
$\theta_1,\ldots,\theta_\ell$ equal to $\xi$ or $\eta$.
\end{rem}

\subsection*{Proof of Theorems \ref{thm:first_Gessel_simplified} and \ref{thm:Gessel-bi}}

In order to prove Theorem \ref{thm:Gessel-bi}, we could use \cite[Theorem 6]{KRnew}, which gives the poles and the principal parts at these poles of $r_y$ in terms of the function $f_y$, for any model of walks with small steps in the quarter plane (and rational $\o_2/\o_3$---which is the case here, see \eqref{kn}). However, we prefer adopting a simpler and more direct approach, which is based on our key-equation \eqref{eq:prolongement_introduction}. We shall then deduce Theorem \ref{thm:first_Gessel_simplified} from Theorem \ref{thm:Gessel-bi}.

\begin{proof}[Proof of Theorem \ref{thm:Gessel-bi}]
Since $r_y$ is elliptic with periods $\o_1,3\o_2$ (Lemma \ref{lem:elliptic}),
 and since any elliptic function is characterized by its poles in a fundamental parallelogram,
 it suffices to find the poles of $r_y$ in
 $\o_1[-1/2,1/2)+\o_2[1/8,25/8)$. We shall consider the decomposition
\begin{align}
\nonumber
      \frac{\o_1}{2}[-1,1)+\frac{\o_2}{8}[1,25)=
      &\Bigl\{\frac{\o_1}{2}[-1,1)+\frac{\o_2}{8}[5,9)\Bigl\}
      \cup \Bigl\{\frac{\o_1}{2}[-1,1)+\frac{\o_2}{8}[2,5)\Bigl\}
      \\\nonumber\cup&\Bigl\{\frac{\o_1}{2}[-1,1)+\frac{\o_2}{8}[9,15)\Bigl\}\cup\Bigl\{\frac{\o_1}{2}[-1,1)+\frac{\o_2}{8}[15,21)\Bigl\}\\\cup&\Bigl\{\frac{\o_1}{2}[-1,1)+\frac{\o_2}{8}[21,25)\Bigl\}\cup\Bigl\{\frac{\o_1}{2}[-1,1)+\frac{\o_2}{8}[1,2)\Bigl\},\label{decompo}
\end{align}
and we shall study successively the six domains in the right-hand side of \eqref{decompo}.

The function $r_y$ cannot have poles in the first domain, since the
latter is equal to $\Delta_y$ (Figure
\ref{The_fundamental_parallelogram}), where $r_y$ is defined through
its GF, see \eqref{rxryrxry}. In the second domain, $r_y$ is defined
thanks to $r_y(\o)= -r_x(\o)+K(0,0)Q(0,0)+x(\o)y(\o)$, see
\eqref{rDrD}. The second domain being included in $\Delta_x$ (Figure
\ref{The_fundamental_parallelogram}), the function $r_x$ has no
poles there, and the possible singularities of $r_y$ necessarily
come from the term $x(\o)y(\o)$. Using Lemma \ref{lem:lemKRG}, we
find only one pole in that domain, namely at $3\o_2/8$, of order $1$.
  To compute its residue we notice that the function $x(\o)y(\o)$
 has the same principal part as the function $-f_y(\o)$
at $3\o_2/8$ due to the expression \eqref{fxfy}: in fact, by
Lemma~\ref{lem:lemKRG}, the point $3\o_2/8$ is not a pole of $x(\o)$
and the point $-3\o_2/8+\o_1+\o_2$ is not a pole of $y(\o)$. By
Lemma~\ref{lemma:f_y_gessel} the point $3\o_2/8$ is a simple pole of $-f_y$
with residue $-1/(2z)$, and hence of $x(\o)y(\o)$ as well.
 We record this information
in Table \ref{table:poles} below.

 \begin{table}[ht]
 \begin{tabular}{| l | c | c | c | c | c | c | c | c |}
  \hline
  Point & $\o_2/8$ & $3\o_2/8$ & $11\o_2/8$ & $13\o_2/8$ & $15\o_2/8$ & $17\o_2/8$ & $21\o_2/8$ & $23\o_2/8$ \\
  \hline
  Residue&  $1/(2z)$ & $-1/(2z)$ & $1/(2z)$ &$-1/(2z)$  &$-1/(2z)$ &$1/z$ &$-1/z$ &
$1/(2z)$  \\
  \hline
\end{tabular}
\medskip
\caption{The points of the domain $\o_1[-1/2,1/2)+\o_2[1/8,25/8)$ where the function $r_y$ has poles and the residues at these poles}
\label{table:poles}
\end{table}

We now consider the third domain. We use the equation
\begin{equation}
\label{sdgh}
 r_y(\omega+6\o_2/8)=r_y(\omega)+f_y(\o),
 \end{equation} see
\eqref{cont1} together with (\ref{kn}). Since $r_y$ and $f_y$ both
have a pole at $3\o_2/8$ (see just above for $r_y$ and Lemma
\ref{lemma:f_y_gessel} for $f_y$), $r_y$ has a priori a pole at
$9\o_2/8$. The residue is the sum of residues of $r_y$ and $f_y$ at
$3\o_2/8$: $-1/(2z)+1/(2z)=0$,
 so that the singularity $9\o_2/8$ is removable.
 The point $3\o_2/8$ is the unique pole of $r_y$ on
 $\o_1[-1/2,1/2)+\o_2[3/8,9/8)$ by the previous analysis.
 It follows that, except for $9\o_2/8$,
 the poles of $r_y$ on the third domain
  ${\o_1}[-1/2,1/2)+{\o_2}[9/8,15/8)$
 necessarily arise by (\ref{sdgh}) from those of $f_y$
 on $\o_1[-1/2,1/2)+\o_2[3/8,9/8)$, that is $5\o_2/8$ and
 $7\o_2/8$.
 Lemma \ref{lemma:f_y_gessel}
 thus implies that $r_y$ has poles at $11\o_2/8$ and $13\o_2/8$,
  with respective residues $1/(2z)$ and $-1/(2z)$.
   These results are summarized in Table \ref{table:poles}.
 
   For the fourth and the fifth domains, we use exactly the same arguments,
   namely  equation \eqref{sdgh} and the knowledge of the poles
   in the previous domains.
   For the fourth domain ${\o_1}[-1/2,1/2)+{\o_2}[15/8,21/8)$,
     by (\ref{sdgh}), the poles of $r_y$ can arise from
   $9\o_2/8$ where $f_y$ has a pole, and from $11\o_2/8$, $13\o_2/8$ were
     both $r_y$ and $f_y$ have poles.
    Then the residue at $15\o_2/8$ is $-1/(2z)$,
the one at $17\o_2/8$ is $1/(2z)+ 1/(2z)=1/z$, the residue at
$19\o_2/8$ is $-1/(2z)+ 1/(2z)=0$, so that $19\o_2/8$ is a removable
singularity. For the fifth domain
${\o_1}[-1/2,1/2)+{\o_2}[21/8,25/8)$, by (\ref{sdgh}), the poles may
come from $15\o_2/8$ and $17\o_2/8$ where both $r_y$ and $f_y$ have
poles. The residue at $21\o_2/8$ is $-1/(2z)-1/(2z)=-1/z$, the one
at  $23\o_2/8$ is $1/z-1/(2z)=1/(2z)$.

As for the last domain, we can use equation  (\ref{sdgh})  under the
form $r_y(\omega)=r_y(\omega+6\o_2/8)-f_y(\o)$.
   As already proven, $r_y$ has no poles at $[7\o_2/8, \o_2)$,
   hence, the only poles of $r_y$ in this domain are those of
   $-f_y$. By Lemma \ref{lemma:f_y_gessel} this is $\o_2/8$ with the
   residue $1/(2z)$.

 The proof of Table
\ref{table:poles} is complete.

To conclude the proof of Theorem \ref{thm:Gessel-bi} we use Property \ref{expression_elliptic_zeta} of Appendix \ref{appendix-elliptic}. This property allows to express $r_y$ as a sum of a constant $c$ and of eight $\zeta$-functions (eight because there are eight poles in Table \ref{table:poles}), exactly as in equation \eqref{zeta}.
\end{proof}

\begin{proof}[Proof of Theorem \ref{thm:first_Gessel_simplified}]
Equation \eqref{sqs2} yields $r_x(\o) =
x(\o)y(\o)-r_y(\o)+K(0,0)Q(0,0)$.
   We compute the constant $K(0,0)Q(0,0)$  as $r_y(\o_0^y)=K(0,y(\o_0^y ))Q(0, y( \o_0^y ))$,
 where $\o_0^y \in \Delta_y$ is such that $y(\o_0^y)=0$.
 Lemma \ref{lem:lemKRG} gives a unique possibility for $\o_0^y$, namely, $\o_0^y=7\o_2/8$.
   Hence $r_x(\o)=
x(\o)y(\o)-r_y(\o)+r_y(7\o_2/8)$.
  Let us substitute $\o=5\o_2/8$ in this equation. The point $ 5\o_2/8$
   is a zero of $x(\o)$
    that lies in $\Delta_x$, so that
    \begin{equation*}
     r_x(5\o_2/8)= K(x(5\o_2/8
    ),0)Q(x(5\o_2/8),0)=K(0,0) Q(0,0)=z Q(0,0).
    \end{equation*}
        This point is not a pole of $y(\o)$, in such a way that
      $x(5\o_2/8 )y(5\o_2/8 )=0$. We obtain
\begin{equation}
\label{eq:first_Gessel}
     zQ(0,0) = r_y(7\o_2/8)-r_y(5\o_2/8).
\end{equation}
Note in particular that in order to obtain the expression
\eqref{eq:first_Gessel} of $Q(0,0)$,
 there is no need to know the constant $c$ in Theorem \ref{thm:Gessel-bi}.

With Theorem \ref{thm:Gessel-bi} and \eqref{eq:first_Gessel},
$Q(0,0)$ can be written as a sum of $16$ Weierstrass $\zeta_{1,3}$-functions
 (each of them being evaluated at a rational multiple of $\o_2$).
 Using the fact that $\zeta_{1,3}$ is an odd function and using property \ref{half_period_translated_zeta},
  we can perform many easy simplifications in \eqref{eq:first_Gessel}, and, this way, we obtain \eqref{eq:first_Gessel_simplified}.
\end{proof}

We shall see in Section \ref{sec:proof_B} how to deduce from Theorem \ref{thm:Gessel-bi} the expression of $Q(0,y;z)$ (and in fact, the expression of all its algebraic branches).


\section{Proof of Gessel's conjecture (Problem \ref{enumi:problem_A})}
\label{sec:proof_A}

In this section, we shall prove Gessel's formula~\eqref{eq:Gessel's_conjecture} for the number of Gessel excursions.
  The starting point is Theorem~\ref{thm:first_Gessel_simplified}, which expresses the generating function of Gessel excursions as a linear combination of (evaluations at multiples of $\o_2/4$ of) the Weierstrass zeta function $\zeta_{1,3}$ with periods $\o_1,3\o_2$. The individual terms of this linear combination are (possibly) transcendental functions; our strategy is to group them in a way that brings up a linear combination of algebraic hypergeometric functions, from which Gessel's conjecture follows by telescopic summation.

\subsection*{Roadmap of the proof}
More precisely, Gessel's formula~\eqref{eq:Gessel's_conjecture} is
equivalent to\footnote{This was already pointed out by Ira Gessel when he initially formulated the conjecture.} 
\begin{equation*}
     Q(0,0;z) = \frac{1}{2z^2} \left( {}_2F_1\left(\left[-\frac{1}{2},-\frac{1}{6}\right],\left[\frac{2}{3}\right],16z^2\right)-1 \right).
\end{equation*}
Here, we use the notation ${}_2F_1([a,b],[c],z)$ for the Gaussian hypergeometric function
\begin{equation}
\label{eq:def_hypergeometric_functions}
     {}_2F_1([a,b],[c],z) = \sum_{n=0}^{\infty}\frac{(a)_n\cdot (b)_n}{(c)_n}\frac{z^n}{n!}.
\end{equation}

In view of Theorem~\ref{thm:first_Gessel_simplified}, Gessel's conjecture is therefore equivalent to
\begin{equation}\label{eq:equivGessel}
   L_1 - 3L_2 + 2L_3 + 3L_4 -5L_5 + 2L_6 = G-1,
\end{equation}
where $G=G(z)$ is the algebraic hypergeometric function ${}_2F_1([-1/2,-1/6],[2/3],16z^2)$, and where $L_k$ denotes the function $\zeta_{1,3}(k  \omega_2/4)$ for $1\leq k \leq 6$.

Let us denote by $V_{i,j,k}$ the function $L_i + L_j - L_k$.
Then, the left-hand side of equality~\eqref{eq:equivGessel} rewrites
$4V_{1,4,5} - V_{2,4,6} - V_{1,5,6} - 2V_{1,2,3}$.

To prove~\eqref{eq:equivGessel}, our key argument is encapsulated in the following identities:
\begin{align}
     V_{1,4,5} &= (2G+H)/3 - K/2,\label{eq:key145}\\
     V_{2,4,6} &= (2G+H)/3 - K, \label{eq:key246}\\
     V_{1,5,6} &= (J+1)/2,  \label{eq:key156}\\
     V_{1,2,3} &= (2G+2H-J-2K+1)/4. \label{eq:key123}
\end{align}
Here $H,J$ and $K$ are auxiliary algebraic functions, defined in the following way: $H$ is the hypergeometric function ${}_2F_1([-1/2,1/6],[1/3],16z^2)$, and $J$ stands for $(G-K)^2$, where $K$ is equal to $zG' = 4z^2{}_2F_1([1/2, 5/6],[5/3],16z^2)$.

Gessel's conjecture is a consequence of the equalities~\eqref{eq:key145}--\eqref{eq:key123}.
Indeed, by summation, these equalities imply that $4V_{1,4,5} - V_{2,4,6} - V_{1,5,6} - 2V_{1,2,3}$ is equal to $G-1$, proving~\eqref{eq:equivGessel}.

It then remains to prove equalities~\eqref{eq:key145}--\eqref{eq:key123}. To do this, we use the following strategy. Instead of proving the equalities of functions of the variable $z$, we rather prove that their evaluations at
$z=(x(x+1)^3/(4x+1)^3)^{1/2}$ are equal. This is sufficient, since the map 
$\varphi : x \mapsto (x(x+1)^3/(4x+1)^3)^{1/2}$ is a diffeomorphism between $(0,1/2)$ onto $(0,1/4)$.
The choice of this algebraic transformation is inspired by the Darboux covering for tetrahedral hypergeometric equations of the Schwarz type $(1/3, 1/3, 2/3)$~\cite[\S6.1]{Vidunas08}. 

First, we make use of a corollary of the Frobenius-Stickelberger
identity~\ref{Frobenius} (\cite[page 446]{WW}), which implies that $V_{i,j,k}$
is equal to the algebraic function $\sqrt{T_i + T_j + T_k}$ as soon as
$k=i+j$. Here, $T_\ell$ denotes the algebraic function $\wp_{1,3}(\ell
\omega_2/4)$. Second, using classical properties of the Weierstrass
functions~$\wp$ and~$\zeta$, we explicitly determine $T_\ell(\varphi(x))$ for
$1\leq \ell \leq 6$, then use them to compute $V_{1,4,5},V_{2,4,6},V_{1,5,6}$
and $V_{1,2,3}$ evaluated at $z=\varphi(x)$. Finally,
equalities~\eqref{eq:key145}--\eqref{eq:key123} are proved by checking that
they hold when evaluated at $z=\varphi(x)$.

\subsection*{Preliminary results}
We shall deal with elliptic functions with different
pairs of periods. We shall denote by $\zeta,\wp$ the elliptic
functions with periods $\o_1,\o_2$, and by $\zeta_{1,3},\wp_{1,3}$
the elliptic functions with periods $\o_1,3\o_2$, see Appendix
\ref{appendix-elliptic} for their definition and properties. Further, we recall
that elliptic functions are alternatively characterized by their
periods (see equation \eqref{eq:first_time_expansion_wp}) or by
their invariants. The invariants of $\wp$ are denoted by $g_2,g_3$.
They are such that
\begin{equation}
\label{eq:diff_eq}
     \wp'(\o)^2=4\wp(\o)^3-g_2\wp(\o)-g_3,\qquad \forall \o\in{\bf C}.
\end{equation}
We recall from \cite[Lemma 12]{KRG} the following result that provides explicit expressions for the invariants $g_2,g_3$.
\begin{lem}[\cite{KRG}]
\label{lemma:invariants_o_1_o_2-Gessel}
We have
\begin{equation}
\label{eq:invariants_o_1_o_2-Gessel}
     g_2 = (4/3)(1-16z^2+16z^4),\qquad
     g_3=-(8/27)(1-8z^2)(1-16z^2-8z^4).
\end{equation}
\end{lem}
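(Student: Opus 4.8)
The plan is to read off the invariants $g_2,g_3$ from the three half-period values of $\wp$, which the uniformization \eqref{eq:uniformization_Gessel} ties directly to the branch points $x_i$. Inverting \eqref{eq:uniformization_Gessel} gives
\[
\wp(\omega)=\frac{d''(x_4)}{6}+\frac{d'(x_4)}{x(\omega)-x_4},
\]
and since $\omega_{x_1}=\omega_2/2$, $\omega_{x_2}=(\omega_1+\omega_2)/2$, $\omega_{x_3}=\omega_1/2$ are exactly the three nonzero half-periods (with $x(\omega_{x_i})=x_i$), the three roots of $4t^3-g_2t-g_3$ are
\[
e_i=\frac{d''(x_4)}{6}+\frac{d'(x_4)}{x_i-x_4},\qquad i\in\{1,2,3\}.
\]
From $\wp'(\omega)^2=4(\wp-e_1)(\wp-e_2)(\wp-e_3)$ together with \eqref{eq:diff_eq}, I then read off $e_1+e_2+e_3=0$, $g_2=-4\sum_{i<j}e_ie_j$ and $g_3=4e_1e_2e_3$, so it suffices to compute the elementary symmetric functions of the $e_i$ (the labeling of the roots is irrelevant).

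Writing $e_i=A+Bu_i$ with $A=d''(x_4)/6$, $B=d'(x_4)$ and $u_i=1/(x_i-x_4)$, I would obtain the symmetric functions of the $u_i$ from the Taylor expansion of $d$ at its root $x_4$: since $d(x)=z^2\prod_{i=1}^4(x-x_i)$ with $d^{(4)}\equiv 24z^2$, the cubic $\prod_{i=1}^3(p-p_i)$ (where $p_i=x_i-x_4$) equals $d(p+x_4)/(z^2 p)$, whence $\sum u_i=-d''(x_4)/(2d'(x_4))$, $\sum_{i<j}u_iu_j=d'''(x_4)/(6d'(x_4))$ and $u_1u_2u_3=-z^2/d'(x_4)$. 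Substituting these into the symmetric functions of the $e_i$ produces the compact intermediate formulas
\[
g_2=\tfrac{1}{3}\!\left[(d''(x_4))^2-2\,d'(x_4)\,d'''(x_4)\right],\qquad
g_3=-\tfrac{(d''(x_4))^3}{27}+\tfrac{d''(x_4)\,d'(x_4)\,d'''(x_4)}{9}-4z^2(d'(x_4))^2.
\]
The fact that $e_1+e_2+e_3=0$ comes out automatically from the choice $A=d''(x_4)/6$ gives a reassuring internal check.

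It then remains to evaluate $d',d'',d'''$ at $x_4$. Here I would exploit the factorization $d=(b-2zx)(b+2zx)$ with $b(x)=zx^2-x+z$, so that $x_4$ is a root of $q_1(x)=zx^2-(1+2z)x+z$; this yields the key simplification $b(x_4)=2zx_4$, and hence, using $b'(x_4)=2zx_4-1$,
\[
d'(x_4)=4z(1+2z)x_4-8z^2,\qquad d''(x_4)=24z^2x_4+2-16z^2,\qquad d'''(x_4)=12z(2zx_4-1).
\]
Finally I substitute the explicit value $x_4=(1+2z+\sqrt{1+4z})/(2z)$ and simplify.

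The main obstacle is precisely this last simplification: each of $d',d'',d'''$ carries an irrational part proportional to $\sqrt{1+4z}$, and one must verify that these parts cancel in the combinations defining $g_2$ and $g_3$, leaving polynomials in $z$, and that the surviving rational parts collapse to exactly $\tfrac{4}{3}(1-16z^2+16z^4)$ and $-\tfrac{8}{27}(1-8z^2)(1-16z^2-8z^4)$. Both cancellations do occur; organizing the computation by writing each quantity in the form (rational part) $+$ (rational part)$\cdot\sqrt{1+4z}$ and tracking the two parts separately keeps the bookkeeping routine, if somewhat lengthy.
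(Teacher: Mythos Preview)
The paper does not prove this lemma; it simply quotes it from \cite[Lemma~12]{KRG}. Your argument is a correct, self-contained proof: inverting~\eqref{eq:uniformization_Gessel} gives the half-period values $e_i=d''(x_4)/6+d'(x_4)/(x_i-x_4)$, your extraction of the symmetric functions of the $u_i=1/(x_i-x_4)$ from the Taylor coefficients of $d$ at its root $x_4$ is accurate, and the resulting closed forms
\[
3g_2=(d''(x_4))^2-2d'(x_4)d'''(x_4),\qquad
27g_3=-(d''(x_4))^3+3d''(x_4)d'(x_4)d'''(x_4)-108z^2(d'(x_4))^2
\]
are correct. The cancellation of the $\sqrt{1+4z}$ contributions after substituting $x_4=(1+2z+\sqrt{1+4z})/(2z)$ genuinely occurs.

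If you want to bypass that last cancellation entirely, observe that your derivation used only $d(x_4)=0$ and $d^{(4)}\equiv 24z^2$, so the two displayed identities hold at \emph{every} root of $d$; equivalently, they hold modulo~$d$. Indeed, differentiating $(d'')^2-2d'd'''$ gives $2d''d'''-2d''d'''-2d'd^{(4)}=-48z^2d'$, so $(d'')^2-2d'd'''+48z^2d$ is constant in $x$. Evaluating at $x=0$ (where $d=z^2$, $d'=-2z$, $d''=2-4z^2$, $d'''=-12z$) yields $3g_2=(2-4z^2)^2-2(-2z)(-12z)+48z^4=4(1-16z^2+16z^4)$ with no irrationalities. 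The same device, applied once more, handles $g_3$.
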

Likewise, we define the invariants $g_2^{1,3},g_3^{1,3}$ of $\wp_{1,3}$.
To compute them, it is convenient to first introduce an algebraic function denoted~$R$, which is the unique positive root of
\begin{equation}
\label{eq:def_R_Gessel}
     X^4-2g_2X^2+8g_3X-g_2^2/3=0.
\end{equation}
To prove that \eqref{eq:def_R_Gessel} has a unique positive root, we need to introduce the
 discriminant of the fourth-degree polynomial $P(X)$ defined by \eqref{eq:def_R_Gessel}.
 Since $\deg P(X)=4$ and since the leading coefficient of $P(X)$ is $1$, its discriminant equals
 the resultant of $P(X)$ and $P'(X)$. Some elementary computations give that it equals $cz^{16}(1-16z^2)^2$, where $c$ is a negative constant.
The discriminant is thus negative (for any $z\in(0,1/4)$). On the other hand, the discriminant can be interpreted as $\prod_{1\leq i<j\leq 4}(R_i-R_j)^2$, where the $R_i$, $i\in\{1,\ldots ,4\}$, are the roots of $P(X)$. The negative sign of the discriminant implies that $P(X)$ has two complex conjugate roots and two real roots. Further, the product of the roots is clearly negative, see \eqref{eq:def_R_Gessel}, so that one of the two real roots is negative while another one is positive.

Using equations \eqref{eq:invariants_o_1_o_2-Gessel} and \eqref{eq:def_R_Gessel}, we obtain the local expansion $R(z) = 2-16z^2-48z^4+O(z^6)$ in the neighborhood of $0$.

The algebraic function $R$ will play an important role in determining the algebraic functions $T_\ell = \wp_{1,3}(\ell  \omega_2/4)$. 
To begin with, the next lemma expresses $T_4$, as well as the invariants $g_2^{1,3},g_3^{1,3}$, in terms of $R$.

\begin{lem}
\label{lem:three_important_values}
One has
\begin{align*}
     T_4 &= \wp_{1,3}(\o_2) = R/6,\\
     g_2^{1,3} &= -g_2/9+10R^2/27,\\
     g_3^{1,3} & = -35R^3/729+7g_2R/243-g_3/27,
\end{align*}
  where expressions for $g_2$ and $g_3$ are given in~\eqref{eq:invariants_o_1_o_2-Gessel}.
\end{lem}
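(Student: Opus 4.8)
The plan is to express all three quantities by relating the Weierstrass function $\wp_{1,3}$ (periods $\o_1,3\o_2$) to the Weierstrass function $\wp$ (periods $\o_1,\o_2$), whose invariants are already known from Lemma~\ref{lemma:invariants_o_1_o_2-Gessel}. The key structural fact is that $3\o_2$ is a multiple of the period $\o_2$, so $\wp_{1,3}$ arises from $\wp$ by a \emph{division of periods} (tripling the lattice in the $\o_2$ direction). Concretely, the half-period value $T_4=\wp_{1,3}(\o_2)$ is, for the $(\o_1,3\o_2)$-lattice, the value of $\wp_{1,3}$ at the point $\o_2$, which is \emph{not} a half-period of that lattice but rather a third of the real period. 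I would therefore first pin down $T_4$ via the defining quartic~\eqref{eq:def_R_Gessel}: since $R$ is defined as the unique positive root of $X^4-2g_2X^2+8g_3X-g_2^2/3=0$, I expect that $6T_4$ (equivalently $6\wp_{1,3}(\o_2)$) satisfies exactly this quartic, so that $T_4=R/6$ follows by matching the positive root. The cleanest way to see this is to use the addition/trisection structure: the three values $\wp_{1,3}(\o_2),\wp_{1,3}(2\o_2),\wp_{1,3}(3\o_2)$ (the last being $\wp_{1,3}(3\o_2)=\infty$ is excluded; rather one uses the finite trisection values) are the roots of a polynomial whose coefficients are symmetric functions expressible through $g_2,g_3$, and the quartic~\eqref{eq:def_R_Gessel} is precisely the resolvent governing $6\wp_{1,3}(\o_2)$.

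Once $T_4=R/6$ is established, the invariants $g_2^{1,3},g_3^{1,3}$ should be computed from the differential equation $(\wp_{1,3}')^2=4\wp_{1,3}^3-g_2^{1,3}\wp_{1,3}-g_3^{1,3}$. The strategy here is to exploit the \emph{three-isogeny} between the curves: the lattice $\o_1{\bf Z}+\o_2{\bf Z}$ contains $\o_1{\bf Z}+3\o_2{\bf Z}$ as an index-$3$ sublattice, so $\wp$ and $\wp_{1,3}$ are related by a degree-$3$ rational map, and the transformation theory of elliptic functions under such isogenies gives explicit polynomial formulas for $(g_2^{1,3},g_3^{1,3})$ in terms of $(g_2,g_3)$ and the relevant division values. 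I would substitute the known value $\wp_{1,3}(\o_2)=R/6$ together with classical tripling (or division-by-three) formulas for the invariants, which yield $g_2^{1,3}$ and $g_3^{1,3}$ as polynomials in $R$, $g_2$, $g_3$. Matching degrees and using the quartic relation~\eqref{eq:def_R_Gessel} to eliminate $R^4$ should collapse these into the stated compact forms $g_2^{1,3}=-g_2/9+10R^2/27$ and $g_3^{1,3}=-35R^3/729+7g_2R/243-g_3/27$.

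The main obstacle I anticipate is keeping the normalization constants straight in the transformation formulas for the invariants under the $3$-isogeny: the classical formulas for period-tripling involve the values of $\wp$ at the three-torsion points, and assembling the symmetric functions of these values while correctly tracking the factors of $3$ (and the relation $\wp_{1,3}(\o_2)=R/6$ versus $\wp$ evaluated at the corresponding torsion point) is where errors naturally creep in. A clean way to sidestep brute symmetric-function manipulation would be to verify the three claimed identities by a \emph{local expansion at $z=0$}: using the expansions of $g_2,g_3$ from~\eqref{eq:invariants_o_1_o_2-Gessel} and $R(z)=2-16z^2-48z^4+O(z^6)$ (already recorded above), one checks that both sides of each of the three formulas agree as power series in $z$ to sufficiently high order, which suffices because both sides are algebraic functions of $z$ of bounded degree. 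In practice I would combine the two: derive $T_4=R/6$ and the shapes of $g_2^{1,3},g_3^{1,3}$ from the isogeny/transformation theory, and confirm the precise rational coefficients by the series check against~\eqref{eq:invariants_o_1_o_2-Gessel}.
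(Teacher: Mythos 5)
Your overall strategy---relating $\wp_{1,3}$ to $\wp$ through the index-$3$ sublattice $\o_1{\bf Z}+3\o_2{\bf Z}\subset\o_1{\bf Z}+\o_2{\bf Z}$ and the transformation theory of elliptic functions---is the same as the paper's, which implements it via property \ref{principle_transformation} and the addition theorem \ref{addition_theorem} to obtain the explicit relation \eqref{eq:after_two_at} and then expands both sides at the origin. But your first step contains a genuine gap, and it is a circularity rather than a missing computation. You propose to \emph{first} pin down $T_4$ by arguing that $6\wp_{1,3}(\o_2)$ satisfies the quartic \eqref{eq:def_R_Gessel}, whose coefficients involve the \emph{old} invariants $g_2,g_3$, on the grounds that this quartic is ``the resolvent governing the trisection values.'' That is not what the division structure gives you. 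The point $\o_2$ is a $3$-torsion point of the lattice $\o_1{\bf Z}+3\o_2{\bf Z}$, so the polynomial that $\wp_{1,3}(\o_2)$ canonically satisfies is the $3$-division polynomial $X^4-\tfrac{1}{2}g_2^{1,3}X^2-g_3^{1,3}X-\tfrac{1}{48}(g_2^{1,3})^2$, whose coefficients are the \emph{new} invariants $g_2^{1,3},g_3^{1,3}$ --- exactly the quantities you have not yet computed. (Also, there are four, not three, finite trisection values modulo $\pm 1$, consistent with the degree of that quartic.) One can check that substituting the lemma's expressions for $g_2^{1,3}$ and $g_3^{1,3}$ into this division polynomial and setting $X=R/6$ yields $\tfrac{1}{1296}\left(R^4-2g_2R^2+8g_3R-g_2^2/3\right)$; in other words, \eqref{eq:def_R_Gessel} is what the division polynomial \emph{becomes after} the two invariant identities are known, not an independent a priori characterization of $T_4$. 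So you cannot ``first'' determine $T_4$ and ``then'' the invariants: the three identities must be obtained simultaneously. The paper does precisely this, extracting two equations in the three unknowns $\wp_{1,3}(\o_2)$, $g_2^{1,3}$, $g_3^{1,3}$ from the $\o^2$ and $\o^4$ coefficients of the Laurent expansion of \eqref{eq:after_two_at}, adjoining the division polynomial as a third equation, and solving the resulting nonlinear system.

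Your fallback---verifying the three identities by power-series expansion in $z$ at $0$---does not close this gap as stated, because you have no independent handle on the $z$-expansions of $\wp_{1,3}(\o_2)$, $g_2^{1,3}$ and $g_3^{1,3}$: the expansion $R(z)=2-16z^2-48z^4+O(z^6)$ recorded in the paper is itself \emph{derived from} \eqref{eq:def_R_Gessel}, and Lemma~\ref{lemma:invariants_o_1_o_2-Gessel} only gives you $g_2$ and $g_3$. To expand the left-hand sides you would again need the isogeny relation \eqref{eq:after_two_at} (or an equivalent), at which point you are back to the paper's argument; you would also need an explicit degree bound to know when to stop the comparison. If you replace your first step by the simultaneous-system argument, the rest of your outline (tracking normalizations in the degree-$3$ transformation, selecting the positive root) is sound and matches the paper.
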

\begin{proof}
Using the properties \ref{addition_theorem} and \ref{principle_transformation} from Lemma~\ref{lemma_properties_wp},
 one can write,
\begin{equation}
\label{eq:after_two_at}
     \wp(\o) = -4\wp_{1,3}(\o_2)-\wp_{1,3}(\o)+\frac{\wp_{1,3}'(\o)^2+\wp_{1,3}'(\o_2)^2}{2(\wp_{1,3}(\o)-\wp_{1,3}(\o_2))^2},\qquad \forall\o\in{\bf C}.
\end{equation}
We then make a local expansion at the origin of the both sides of the equation above, using property~\ref{expansion_0} from Lemma~\ref{lemma_properties_wp}. We obtain
\begin{align*}
     &\frac{1}{\o^2}+\frac{g_2}{20}\o^2+\frac{g_3}{28}\o^4+O(\o^6)=\\
     &\frac{1}{\o^2}+\left(6\wp_{1,3}(\o_2)^2-\frac{9g_2^{1,3}}{20}\right)\o^2+\left(10\wp_{1,3}(\o_2)^3-\frac{3g_2^{1,3}\wp_{1,3}(\o_2)}{2}-\frac{27g_3^{1,3}}{28}\right)\o^4+O(\o^6).
\end{align*}
Identifying the expansions above, we obtain two equations for the three unknowns
$\wp_{1,3}(\o_2)$, $g_2^{1,3}$ and $g_3^{1,3}$
(remember that its invariants $g_2$ and $g_3$ are known from Lemma \ref{lemma:invariants_o_1_o_2-Gessel}).
 We add a third equation by noticing that $\wp_{1,3}(\o_2)$
  is the only real positive solution to (see, e.g., \cite[Proof of Lemma~22]{KRG})
\begin{equation*}
     X^4-\frac{g_2^{1,3}}{2}X^2-g_{3}^{1,3}X-\frac{(g_2^{1,3})^2}{48}=0.
\end{equation*}
We then have a (non-linear) system of three equations with three unknowns.
A few easy computations finally lead to the expressions of $\wp_{1,3}(\o_2)$,
  $g_2^{1,3}$ and $g_3^{1,3}$ of Lemma \ref{lem:three_important_values}.
\end{proof}

The next result expresses the algebraic functions $T_1, T_2, T_3, T_5$ and $T_6$ in terms of the algebraic function $R$, and of the invariants $g_2$ and $g_3$ (the quantity $T_4$ has already been found in Lemma \ref{lem:three_important_values}).
\begin{lem}
\label{lem:three_formulae}
One has the following formul\ae:
\begin{enumerate}
     \item\label{item:formula_0} $T_1 = \wp_{1,3}(\o_2/4)$ is the unique solution of
\begin{multline}
    \label{bbb}
     X^3-\left(\frac{R}{3}+\frac{1+4z^2}{3}\right)X^2+\left(\frac{R
    (1+4z^2)}{9}+\frac{R^2}{108}+\frac{g_2}{18}\right)X\\
     +\left(\frac{23R^3}{2916}-\frac{R^2
    (1+4z^2)}{108}+\frac{g_3}{27}-\frac{19Rg_2}{972}\right)=0
\end{multline}
such that in the neighborhood of $0$, $T_1 =1/3+4z^2/3-4z^6-56z^8+O(z^{10})$.
     \item\label{item:formula_1} $T_2 = \wp_{1,3}(2\o_2/4)$ is equal to
\begin{equation}
\label{eq-Gessel-definition-T_2}
     T_2 = \frac{R+1-8z^2}{6}-{\frac{T_6}{2}}.
\end{equation}
    \item\label{item:formula_0_ter} $T_3 = \wp_{1,3}(3\o_2/4)$ is the unique solution of \eqref{bbb} such that in the neighborhood~of~$0$, $T_3 = 1/3-8z^2/3-8z^4-60z^6+O(z^8)$.
          \item\label{item:formula_0_bis} $T_5 = \wp_{1,3}(5\o_2/4)$ is the unique solution of \eqref{bbb} such that in the neighborhood~of~$0$, $T_5=1/3-8z^2/3-8z^4-64z^6+O(z^8)$.
              \item\label{item:formula_2} $T_6 = \wp_{1,3}(6\o_2/4)$ is equal to
    \begin{equation}
    \label{eq-Gessel-definition-T_6}
         T_6 = \frac{R+1-8z^2-\sqrt{3R^2-4R(1-8z^2)+4(1-8z^2)^2-6g_2}}{9}.
    \end{equation}
\end{enumerate}
\end{lem}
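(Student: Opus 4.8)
The engine of the proof is the transformation formula \eqref{eq:after_two_at}, which expresses $\wp$ (periods $\o_1,\o_2$) rationally in terms of $\wp_{1,3}(\o)$, $\wp_{1,3}'(\o)$, the value $T_4=\wp_{1,3}(\o_2)=R/6$, and the invariants $g_2^{1,3},g_3^{1,3}$ --- all of which are explicit by Lemma~\ref{lem:three_important_values}. The plan is to evaluate \eqref{eq:after_two_at} (and an equivalent ``coset'' form of it) at the points $k\o_2/4$, reducing each identity to a polynomial equation in the single unknown $T_k=\wp_{1,3}(k\o_2/4)$. Before doing so I would record two auxiliary values of $\wp$: first $\wp(\o_2/2)=e_1=(1-8z^2)/3$, obtained from \eqref{eq:uniformization_Gessel} (the point $\o_2/2=\o_{x_1}$ satisfies $x(\o_2/2)=x_1$) or, equivalently, by checking directly that $(1-8z^2)/3$ is a root of $4X^3-g_2X-g_3$ with $g_2,g_3$ as in \eqref{eq:invariants_o_1_o_2-Gessel}; second $\wp(\o_2/4)=(1+4z^2)/3$, obtained from $e_1$ by the bisection formula~\ref{bisection}.

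For $T_1,T_3,T_5$ the key observation is that, by $\o_2$-periodicity and evenness of $\wp$, the left-hand side of \eqref{eq:after_two_at} at $\o=\o_2/4$, $3\o_2/4$ and $5\o_2/4$ takes one and the same value $\wp(\o_2/4)=(1+4z^2)/3$. On the right-hand side I would replace $\wp_{1,3}'(\o)^2$ by $4X^3-g_2^{1,3}X-g_3^{1,3}$ (differential equation \eqref{eq:diff_eq} for $\wp_{1,3}$) with $X=T_1,T_3,T_5$ respectively, insert the known $T_4=R/6$ and $\wp_{1,3}'(\o_2)^2=4(R/6)^3-g_2^{1,3}(R/6)-g_3^{1,3}$, and clear the denominator $2(X-R/6)^2$. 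This turns the three identities into the \emph{same} cubic equation in $X$, namely \eqref{bbb}; hence $T_1,T_3,T_5$ are precisely its three roots. They are then told apart by their behaviour as $z\to 0$: all three tend to $1/3$, so the separation must be read off from the $z$-expansions (computed perturbatively from \eqref{bbb}, or from the Taylor/Laurent expansions of $\wp_{1,3}$ at the three points), which proves \ref{item:formula_0}, \ref{item:formula_0_ter} and \ref{item:formula_0_bis}.

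For $T_6=\wp_{1,3}(3\o_2/2)$ the point $3\o_2/2$ is a \emph{half-period} of $\wp_{1,3}$, so $T_6$ is one of the three roots of the half-period cubic $4X^3-g_2^{1,3}X-g_3^{1,3}=0$, whose coefficients are explicit by Lemma~\ref{lem:three_important_values}. To extract the closed form \eqref{eq-Gessel-definition-T_6} I would identify the third half-period value as the rational root $-\tfrac{2}{9}(R+1-8z^2)$ --- a direct substitution verifies that it satisfies the cubic, using the defining relation \eqref{eq:def_R_Gessel} for $R$ and $4e_1^3-g_2e_1-g_3=0$ for $e_1=(1-8z^2)/3$ --- and then factor it out, leaving a quadratic $9X^2-2(R+1-8z^2)X+(\cdots)=0$ whose two roots are $T_6$ and the remaining half-period value. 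Solving this quadratic, and fixing the sign of the radical by the $z\to0$ expansion, gives \eqref{eq-Gessel-definition-T_6}. Finally, $T_2$ follows from an equivalent form of the transformation, the index-$3$ ``coset-sum''
\[
\wp(\o)=\wp_{1,3}(\o)+\wp_{1,3}(\o+\o_2)+\wp_{1,3}(\o+2\o_2)-R/3,
\]
which is deduced from \eqref{eq:after_two_at} through the addition theorem~\ref{addition_theorem}, the constant $-R/3$ being fixed by matching Laurent expansions at $\o=0$ together with $T_4=R/6$. Evaluating it at $\o=\o_2/2$ and using $\wp_{1,3}(5\o_2/2)=\wp_{1,3}(-\o_2/2)=T_2$ yields $e_1=2T_2+T_6-R/3$, that is $T_2=\tfrac{R+1-8z^2}{6}-\tfrac{T_6}{2}$, which is \ref{item:formula_1}.

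The main obstacle is the treatment of $T_6$: justifying that $-\tfrac{2}{9}(R+1-8z^2)$ is exactly the ``extra'' root of the half-period cubic (equivalently, identifying which half-period value it is) and carrying out the algebra that collapses the resulting discriminant to $3R^2-4R(1-8z^2)+4(1-8z^2)^2-6g_2$; this is where the relation \eqref{eq:def_R_Gessel} for $R$ is essential. A second, more benign, delicate point is separating the roots $T_3$ and $T_5$ of \eqref{bbb}, since they agree through order $z^4$ and split only at order $z^6$, so the identifying expansions must be pushed that far. Everything else --- the substitutions via \eqref{eq:diff_eq}, clearing denominators, and matching the transformation constant --- is routine manipulation with the Weierstrass relations.
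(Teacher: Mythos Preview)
Your treatment of $T_1,T_3,T_5$ is the paper's own argument: both of you substitute $\wp_{1,3}'(\o)^2=4X^3-g_2^{1,3}X-g_3^{1,3}$ into \eqref{eq:after_two_at} with the known value $\wp(\o_2/4)=(1+4z^2)/3$, clear denominators, and separate the three roots by their $z$-expansions. Your derivation of \ref{item:formula_1} via the coset-sum (property \ref{principle_transformation}) evaluated at $\o_2/2$ yields exactly the sum-of-roots relation $2T_2+T_6=(R+1-8z^2)/3$ that the paper also uses.

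The one genuine divergence is $T_6$. The paper does \emph{not} invoke the half-period cubic $4X^3-g_2^{1,3}X-g_3^{1,3}=0$; instead it reuses the same cubic \eqref{xxxy}, now at $\o=\o_2/2$ where $\wp(\o_2/2)=(1-8z^2)/3$. Because $\wp_{1,3}(10\o_2/4)=\wp_{1,3}(2\o_2/4)$, that cubic has the double root $T_2$ and the simple root $T_6$, so its \emph{derivative} (a quadratic) has roots $T_2$ and $(T_2+2T_6)/3$; solving it gives \eqref{eq:before_sign} and hence both \eqref{eq-Gessel-definition-T_2} and \eqref{eq-Gessel-definition-T_6} simultaneously. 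This is shorter and avoids the obstacle you flag. Your half-period route is valid, but your justification is slightly off: the putative rational root $-\tfrac{2}{9}(R+1-8z^2)$ \emph{cannot} be verified from the relation \eqref{eq:def_R_Gessel} and $4e_1^3-g_2e_1-g_3=0$ alone (a generic elliptic curve provides a counterexample); one must feed in the explicit Gessel expressions \eqref{eq:invariants_o_1_o_2-Gessel} for $g_2,g_3$. With those in hand the substitution does go through, so this is a gap in the description rather than in the mathematics.
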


\begin{proof}
We first prove that for a given value of $\o$ (and thus for a given value of $\wp(\o)$),
the three solutions of
\begin{multline}
\label{xxxy}
X^3-\left(\frac{R}{3}+\wp(\o)\right)X^2+\left(\frac{R\wp(\o)}{3}+\frac{R^2}{108}
+\frac{g_2}{18}\right)X\\+\left(\frac{23R^3}{2916}-\frac{\wp(\o)R^2}{36}+\frac{g_3}{27}-\frac{19Rg_2}{972}\right)=0
\end{multline}
 are
\begin{equation*}
     \{\wp_{1,3}(\o), \wp_{1,3}(\o+\o_2), \wp_{1,3}(\o+2\o_2)\}.
\end{equation*}
 By property
\ref{principle_transformation} we find
\begin{equation*}
     \wp(\o) = - 4\wp_{1,3}(\o_2)-\wp_{1,3}(\o)+\frac{\wp'_{1,3}(\o)^2+\wp'_{1,3}(\o_2)^2}{2(\wp_{1,3}(\o)-\wp_{1,3}(\o_2))^2},\qquad \forall \o\in{\bf C},
\end{equation*}
where by Lemma \ref{lem:three_important_values}, one has $\wp_{1,3}(\o_2)=R/6$.
Then $\wp'_{1,3}(\o_2)^2=4(R/6)^3-g_2^{1,3}R/6-g_3^{1,3}$, and following this way, we obtain that $\wp_{1,3}(\o)$ satisfies \eqref{xxxy}.

We start the proof of the lemma by showing \ref{item:formula_0}.
Using \cite[Lemma 19]{KRG} one has that
$\wp(\o_2/4)=\wp(3\o_2/4)=(1+4z^2)/3$. Then the equation \eqref{bbb}
is exactly \eqref{xxxy} with $\o=\o_2/4$.
 The three roots of \eqref{bbb} are $\wp_{1,3}(\o_2/4)$,
 $\wp_{1,3}(5\o_2/4)$ and $\wp_{1,3}(9\o_2/4)=\wp_{1,3}(3\o_2/4)$.
  By using standard properties of the Weierstrass function $\wp$, we see that
  $\wp_{1,3}(\o_2/4)$ is the largest of the three quantities
  (and this for any $z\in(0,1/4)$).
  Further, since \eqref{xxxy} is a polynomial of degree $3$,
  we can easily find its roots in terms of the variable $z$.
  This way, we find that the three solutions admit the expansions
  \begin{align*}
  &1/3+4z^2/3-4z^4-56z^6+O(z^8),\\
  &1/3-8z^2/3-8z^4-60z^6+O(z^8),\\
  &1/3-8z^2/3-8z^4-64z^6+O(z^8).
  \end{align*}
  Accordingly,  $T_1=\wp_{1,3}(\o_2/4)$ corresponds to the first one, $T_3=\wp_{1,3}(3\o_2/4)$ to the second one and $T_5=\wp_{1,3}(5\o_2/4)$ to the last one.

We now prove \ref{item:formula_1} and \ref{item:formula_2}. Using
again \cite[Lemma 19]{KRG}, one derives that $\wp(2\o_2/4)
=(1-8z^2)/3$. The three roots of equation \eqref{xxxy} with
$\o=2\o_2/4$ are $\wp_{1,3}(2\o_2/4)$, $\wp_{1,3}(6\o_2/4)$ and
$\wp_{1,3}(10\o_2/4)$. Since
$\wp_{1,3}(10\o_2/4)=\wp_{1,3}(2\o_2/4)$, \eqref{xxxy} with
$\o=2\o_2/4$ has a double root (that we call $t_1$) and a simple
root ($t_2$). It happens to be simpler to deal now with the
derivative of the polynomial in the left-hand side of \eqref{xxxy}.
It is an easy exercise to show that the roots of the derivative of a polynomial of degree $3$ with a double
root at $t_1$ and a simple root at $t_2$
are $t_1$ and $(t_1+2t_2)/3$. This way, we obtain expressions for
$\wp_{1,3}(2\o_2/4)$ and
$(\wp_{1,3}(2\o_2/4)+2\wp_{1,3}(6\o_2/4))/3$, which are equal to
\begin{equation}
\label{eq:before_sign}
     \frac{R+1-8z^2\pm\sqrt{3R^2-4R(1-8z^2)+4(1-8z^2)^2-6g_2}}{9}.
\end{equation}
Since $\wp_{1,3}(2\o_2/4)>\wp_{1,3}(6\o_2/4)$, the root $\wp_{1,3}(2\o_2/4)$
corresponds to the sign $+$ in \eqref{eq:before_sign}. This way we
immediately find expressions for $\wp_{1,3}(2\o_2/4)$ and
$\wp_{1,3}(6\o_2/4)$, and this finishes the proof of the lemma.
\end{proof}

Let $\varphi: (0,1/2) \rightarrow (0,1/4)$ be the diffeomorphism defined by $\varphi(x) = \sqrt{{x(x+1)^3}/{(4x+1)^3}}$. 
The next result derives  explicit expressions of $T_\ell(z)$ when evaluated at
$z=\varphi(x)$.

\begin{lem}
	Define \[M(x) = \frac{4x^4+28x^3+30x^2+10x+1}{3(4x+1)^3} \quad \text{and} \quad N(x) = \frac{2x(x+1)(2x+1)}{(4x+1)^{5/2}}.\]
For any $x\in (0,1/2)$, the following formul\ae\ hold:
\begin{align*}
T_1 \left( \varphi(x) \right) &= M(x) + N(x), \\
T_2 \left( \varphi(x) \right) &= M(x) - \frac{2x(x+1)(2x+1)}{(4x+1)^3}, \\
T_3 \left( \varphi(x) \right) &= M(x) - \frac{2x(x+1)}{(4x+1)^2}, \\
T_4 \left( \varphi(x) \right) &= M(x) - \frac{2x(2x+1)(3x+1)}{(4x+1)^3}, \\
T_5 \left( \varphi(x) \right) &= M(x) - N(x),\\
T_6 \left( \varphi(x) \right) &= \left(\frac{2x+1}{4x+1}\right)^2 - 2 M(x).
\end{align*}
\end{lem}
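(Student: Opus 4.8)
The plan is to reduce every quantity in sight to an explicit algebraic function of the single variable $x$ through the substitution $z=\varphi(x)$, i.e. $z^2 = x(x+1)^3/(4x+1)^3$, and then to match the resulting expressions against the six claimed formul\ae. The starting point is to rewrite the invariants of Lemma~\ref{lemma:invariants_o_1_o_2-Gessel} as rational functions of $x$: substituting $z^2$ turns $g_2$ and $g_3$, as well as the auxiliary quantities $1+4z^2$ and $1-8z^2$ that occur throughout Lemma~\ref{lem:three_formulae}, into explicit elements of ${\bf Q}(x)$ with denominator a power of $4x+1$. The crucial first computation is then to solve the defining quartic~\eqref{eq:def_R_Gessel} for the algebraic function $R$. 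I expect that, after clearing denominators, the value
\[
R = \frac{2(2x^2-2x-1)^2}{(4x+1)^3}
\]
satisfies $R^4-2g_2R^2+8g_3R-g_2^2/3=0$ identically in $x$; being manifestly positive on $(0,1/2)$ and reducing to $R=2$ at $x=0$, it is the unique positive root singled out in~\eqref{eq:def_R_Gessel}, with the correct expansion $R(z)=2-16z^2+O(z^4)$.

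Once $R$ is known explicitly, the identity $T_4=R/6$ from Lemma~\ref{lem:three_important_values} gives $T_4(\varphi(x))$ at once, and it remains only to check the rational-function identity $R/6 = M(x)-2x(2x+1)(3x+1)/(4x+1)^3$. For $T_6$ and $T_2$ I would use the closed forms~\eqref{eq-Gessel-definition-T_6} and~\eqref{eq-Gessel-definition-T_2}. The only subtlety is the radical $\sqrt{3R^2-4R(1-8z^2)+4(1-8z^2)^2-6g_2}$: after substituting the explicit $R$, $g_2$ and $1-8z^2$, the radicand is a rational function of $x$, and I expect it to be a perfect square in ${\bf Q}(x)$, so that the root extracts rationally (the sign being fixed by the behaviour at $x=0$). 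Feeding this into~\eqref{eq-Gessel-definition-T_6} yields $T_6(\varphi(x))$, and then~\eqref{eq-Gessel-definition-T_2} gives $T_2(\varphi(x))$ by direct substitution of $T_6$ and $R$.

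The remaining values $T_1,T_3,T_5$ are the three roots of the cubic~\eqref{bbb}, whose coefficients are now explicit in $x$. Here the natural strategy exploits that the middle root $T_3=M(x)-2x(x+1)/(4x+1)^2$ is rational in $x$, whereas $T_1=M(x)+N(x)$ and $T_5=M(x)-N(x)$ are conjugate over ${\bf Q}(x)$ ($N$ carrying the factor $(4x+1)^{-1/2}$, while $N^2$ is rational). I would first verify by direct substitution that the rational candidate for $T_3$ annihilates~\eqref{bbb}, then divide out $(X-T_3)$ and check that the residual quadratic factor coincides with $(X-M(x))^2-N(x)^2$; equivalently, that the sum and product of the two remaining roots equal $2M$ and $M^2-N^2$. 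The assignment of the roots to the indices $1,3,5$ is then forced by comparison with the $z$-expansions recorded in Lemma~\ref{lem:three_formulae}: for $x\in(0,1/2)$ one has $N>0$ and $N>2x(x+1)/(4x+1)^2$ (both equivalent to $4x+1<(2x+1)^2$), whence $T_1>T_3>T_5$, matching the ordering of the series.

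The conceptual content is thus light; the main obstacle is purely computational bookkeeping. Concretely, the hardest parts are (i) confirming that the proposed $R$ is genuinely a root of the quartic~\eqref{eq:def_R_Gessel}, and (ii) showing that the radicand in~\eqref{eq-Gessel-definition-T_6} is a perfect square and that the cubic~\eqref{bbb} factors as $(X-T_3)\big((X-M)^2-N^2\big)$. Both reduce to polynomial identities in $x$ after clearing the powers of $4x+1$, and working in the quadratic extension ${\bf Q}(x,\sqrt{4x+1})$ (to accommodate $N$) keeps the manipulations systematic; the only genuine care needed is in resolving the two sign ambiguities---the choice of root of the quartic and the sign of the square root---which are both pinned down by the known behaviour at $x=0$.
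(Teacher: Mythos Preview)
Your proposal is correct and follows essentially the same route as the paper: compute $R(\varphi(x))$ explicitly from the quartic~\eqref{eq:def_R_Gessel}, read off $T_4=R/6$, obtain $T_6$ and $T_2$ from the closed forms~\eqref{eq-Gessel-definition-T_6}--\eqref{eq-Gessel-definition-T_2}, and then factor the cubic~\eqref{bbb} into a linear factor (giving $T_3$) times a quadratic (giving the conjugate pair $T_1,T_5$). The only cosmetic difference is that you distinguish $T_1,T_3,T_5$ via the inequality $T_1>T_3>T_5$ on $(0,1/2)$, whereas the paper uses their local expansions at $x=0$; both are valid and equally easy.
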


\begin{proof}
    All equalities are consequences of Lemma~\ref{lem:three_formulae}.
    We begin with $R$: we replace $z$ by $\varphi(x)$ in equation~\eqref{eq:def_R_Gessel}, factor the result, and identify the corresponding minimal polynomial of $R(\varphi(x))$ in ${\bf Q}(x)[T]$. To do this, we use that the local expansion of $R(\varphi(x))$ at $x=0$
is equal to $2-16x+O(x^2)$.
The minimal polynomial has degree 1, proving the equality
\[R \left( \varphi(x) \right) = \frac{2(2x^2-2x-1)^2}{(4x+1)^3}.\]
    {}From $R$, we directly deduce $T_4=R/6$.
    Now, replacing $z$ by $\varphi(x)$ in Lemma~\ref{lem:three_formulae} \ref{item:formula_2} provides the expression of $T_6(\varphi(x))$. Then $T_2$ is treated in a similar way using Lemma~\ref{lem:three_formulae} \ref{item:formula_1}. Finally, an annihilating polynomial for $T_1(\varphi(x)), T_3(\varphi(x)), T_5(\varphi(x))$ is deduced in a similar manner using Lemma~\ref{lem:three_formulae} \ref{item:formula_0}. This polynomial in ${\bf Q}(x)[T]$ factors as a product of a linear factor and a quadratic factor. Using the local expansions $1/3+4/3x-12x^2+80x^3+O(x^4), 1/3-8/3x+16x^2-84x^3+O(x^4)$ and $1/3-8/3x+16x^2-88x^3+O(x^4)$ allows to conclude.
\end{proof}
The following Corollary is a direct consequence of the previous lemma and of the Frobenius-Stickelberger identity~\ref{Frobenius}. 

\begin{cor}\label{V-in-x}
The algebraic functions $V_{1,4,5}, V_{2,4,6}, V_{1,5,6}$ and $V_{1,2,3}$ defined in~\eqref{eq:key145}--\eqref{eq:key123} satisfy the following equalities for any $x\in(0,1/2)$:
\begin{align*}
    V_{1,4,5} \left( \varphi(x) \right) &= \frac{2x^2+4x+1}{(4x+1)^{3/2}}, \\
    V_{2,4,6} \left( \varphi(x) \right) &= \frac{2x+1}{(4x+1)^{3/2}}, \\
    V_{1,5,6} \left( \varphi(x) \right)& = \frac{2x+1}{4x+1}, \\
    V_{1,2,3} \left( \varphi(x) \right) &= \frac{x}{4x+1}  + \frac{(x+1)(2x+1)}{(4x+1)^{3/2}}.
\end{align*}
\end{cor}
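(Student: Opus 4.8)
The plan is to deduce the four identities from the Frobenius--Stickelberger identity (property~\ref{Frobenius}) together with the explicit values of $T_\ell(\varphi(x))$ supplied by the preceding lemma. The starting observation is purely arithmetic: each of the four triples $(i,j,k)\in\{(1,4,5),(2,4,6),(1,5,6),(1,2,3)\}$ satisfies $k=i+j$. As recorded in the roadmap, the corollary of property~\ref{Frobenius} then applies with $u=i\o_2/4$ and $v=j\o_2/4$, giving
\begin{equation*}
V_{i,j,k}=\zeta_{1,3}(i\o_2/4)+\zeta_{1,3}(j\o_2/4)-\zeta_{1,3}\bigl((i+j)\o_2/4\bigr)=\sqrt{T_i+T_j+T_k}.
\end{equation*}
Thus each of $V_{1,4,5},V_{2,4,6},V_{1,5,6},V_{1,2,3}$ is the positive square root of a three-term sum of the $T_\ell$, and the whole problem reduces to evaluating these sums at $z=\varphi(x)$.

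Next I would substitute the formul\ae\ of the preceding lemma and simplify each sum. For $V_{1,4,5}$, $V_{2,4,6}$ and $V_{1,5,6}$ I expect the odd contributions $\pm N(x)$ to disappear: $T_1$ and $T_5$ carry $+N$ and $-N$, which cancel whenever both occur, while each occurrence of $T_6$ brings a hidden $-2M$ that absorbs the $+M$ of the other two summands. Concretely, $T_1+T_4+T_5=3M-\tfrac{2x(2x+1)(3x+1)}{(4x+1)^3}$, which equals $\bigl((2x^2+4x+1)/(4x+1)^{3/2}\bigr)^2$; then $T_1+T_5+T_6=(2x+1)^2/(4x+1)^2$ and $T_2+T_4+T_6=(2x+1)^2/(4x+1)^3$ after combining the remaining numerators. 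Taking the positive square root (legitimate since the claimed right-hand sides are positive on $(0,1/2)$) yields the first three equalities.

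The case $V_{1,2,3}$ is the one that requires care, because here the term $N(x)$, with its denominator $(4x+1)^{5/2}$, survives the summation and must reappear in the final answer. The key point is that $T_1+T_2+T_3$ is still a perfect square, but of a sum $a+b$ of a rational function $a=x/(4x+1)$ and a half-integer-power term $b=(x+1)(2x+1)/(4x+1)^{3/2}$: the cross term $2ab$ equals exactly $N(x)=2x(x+1)(2x+1)/(4x+1)^{5/2}$, while $a^2+b^2$ should reproduce the purely rational part $3M-\tfrac{2x(x+1)(2x+1)}{(4x+1)^3}-\tfrac{2x(x+1)}{(4x+1)^2}$. I would confirm this by clearing the common denominator $(4x+1)^3$ and checking that both sides give the polynomial $4x^4+16x^3+14x^2+6x+1$; this establishes the fourth equality.

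The main obstacle is not conceptual but lies in organising the square-root extraction. Once property~\ref{Frobenius} has reduced everything to square roots of explicit rational or mixed expressions, the cases $V_{1,4,5},V_{2,4,6},V_{1,5,6}$ are immediate perfect squares, whereas for $V_{1,2,3}$ one must anticipate the decomposition $a+b$ so that the single surviving half-integer power is produced precisely by the cross term $2ab$; all remaining steps are routine rational-function algebra, and the sign of each root is pinned down by positivity near $x=0$.
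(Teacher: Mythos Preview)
Your proposal is correct and follows exactly the approach the paper intends: apply the Frobenius--Stickelberger identity \ref{Frobenius} (using that each triple satisfies $k=i+j$, so that $\alpha=i\o_2/4$, $\beta=j\o_2/4$, $\gamma=-k\o_2/4$ sum to zero and oddness/evenness of $\zeta_{1,3}$, $\wp_{1,3}$ gives $V_{i,j,k}^2=T_i+T_j+T_k$), then substitute the explicit values of $T_\ell(\varphi(x))$ from the preceding lemma and extract the square root. The paper's own proof is a single sentence stating precisely this; your write-up simply supplies the algebraic bookkeeping (cancellation of $\pm N(x)$, the $-2M$ in $T_6$, and the $(a+b)^2$ decomposition for $V_{1,2,3}$) that the paper leaves implicit, and your sign determination via continuity and a check near $x=0$ is adequate.
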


\subsection*{Completing the proof of Gessel's conjecture}

 The last step consists in proving the four equalities~\eqref{eq:key145}--\eqref{eq:key123}.
The starting point is that the hypergeometric power series
$G = {}_2F_1([-1/2,-1/6],[2/3],16z^2)$, $H = {}_2F_1([-1/2,1/6],[1/3],16z^2)$, $K=z \cdot G'$ and $J=(G-K)^2$ are algebraic and satisfy the equations displayed in the following lemma.

\begin{lem}\label{G-H-J-K-in-x}
For any $x\in(0,1/2)$, one has the following formul\ae:
\begin{align*}
G \left( \varphi(x) \right) &= \frac{4x^2+8x+1}{(4x+1)^{3/2}}, \\
H \left( \varphi(x) \right) &= \frac{4x^2+2x+1}{(4x+1)^{3/2}}, \\
K \left( \varphi(x) \right)& = \frac{4x(x+1)}{(4x+1)^{3/2}}, \\
J \left( \varphi(x) \right) &= \frac{1}{4x+1}.
\end{align*}
\end{lem}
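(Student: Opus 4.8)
The strategy is to recognize that each of the four functions $G, H, K, J$ is algebraic, and that the stated closed forms are verified by checking a polynomial identity after the substitution $z = \varphi(x)$. The key conceptual input is the algebraic transformation $\varphi(x) = \sqrt{x(x+1)^3/(4x+1)^3}$, which---as noted in the roadmap---comes from the Darboux covering for tetrahedral hypergeometric equations of Schwarz type $(1/3,1/3,2/3)$ \cite{Vidunas08}. Under a pullback of this kind, the three tetrahedral ${}_2F_1$'s, namely $G={}_2F_1([-1/2,-1/6],[2/3],16z^2)$ and $H={}_2F_1([-1/2,1/6],[1/3],16z^2)$, become \emph{rational or simple radical} functions of $x$. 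So the plan is first to establish algebraicity, then to pin down the minimal polynomials, and finally to match them against the asserted formul\ae\ by comparing local expansions.

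First I would treat $G$. I would write down the second-order hypergeometric ODE satisfied by $G(z)$ in the variable $z$ (equivalently in $u=16z^2$), pull it back along $z=\varphi(x)$, and observe that the pulled-back equation has only apparent singularities apart from those forced by the covering; by the Darboux/Vidunas theory the Schwarz type $(1/3,1/3,2/3)$ is exactly the tetrahedral case for which such a degree-$3$ pullback trivializes the projective monodromy, so $G(\varphi(x))$ is algebraic of the predicted low degree. In practice it is cleaner to \emph{guess} the candidate $G(\varphi(x)) = (4x^2+8x+1)/(4x+1)^{3/2}$ and \emph{verify} it: I would check that the right-hand side satisfies the pulled-back ODE (a routine but finite symbolic computation) and that it agrees with $G(\varphi(x))$ to enough terms of the Taylor expansion at $x=0$. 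Since $G(z)=1+O(z^2)$ and $\varphi(x)=\sqrt{x}+O(x)$, one has $G(\varphi(x))=1+O(x)$; matching the first few coefficients of the two power series in $x$ (or, after clearing the $(4x+1)^{3/2}$ factor, the first few coefficients of an equality between genuine power series) suffices because both sides are solutions of the same linear ODE with the same initial data. The identity for $H$ is proved in exactly the same way, using its own Schwarz-type-$(1/3,1/3,1/3)$ (tetrahedral) pullback under the same $\varphi$.

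The formula for $K=z\,G'$ then follows by differentiation: from $G(\varphi(x))=(4x^2+8x+1)/(4x+1)^{3/2}$ I would differentiate both sides with respect to $x$, use $K(\varphi(x))=\varphi(x)\,G'(\varphi(x))$ together with the chain rule $\tfrac{d}{dx}G(\varphi(x))=G'(\varphi(x))\varphi'(x)$, solve for $G'(\varphi(x))$, and multiply by $\varphi(x)$; after substituting the explicit $\varphi$ and $\varphi'$ and simplifying, this collapses to $4x(x+1)/(4x+1)^{3/2}$. Finally $J=(G-K)^2$ is purely algebraic: from the established expressions,
\[
(G-K)(\varphi(x)) = \frac{4x^2+8x+1 - 4x(x+1)}{(4x+1)^{3/2}} = \frac{4x+1}{(4x+1)^{3/2}} = \frac{1}{(4x+1)^{1/2}},
\]
so squaring gives $J(\varphi(x)) = 1/(4x+1)$ immediately. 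The main obstacle is the first step---rigorously certifying that $G$ and $H$ are algebraic and identifying their minimal polynomials. Invoking the Vidunas classification \cite{Vidunas08} turns this into the bookkeeping of matching the correct Schwarz triples to the correct pullback; the remaining verifications ($K$ and $J$) are then elementary consequences, as shown above.
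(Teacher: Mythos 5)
Your proposal is correct, but for the identities for $G$ and $H$ it takes a genuinely different route from the paper. The paper \emph{derives} the closed forms: it chains the contiguity relation \cite[Eq.~15.2.15]{AS} (for $G$), respectively Euler's formula \cite[Eq.~15.3.3]{AS} and the contiguity relation \cite[Eq.~15.2.25]{AS} (for $H$), so as to rewrite $G(\varphi(x))$ and $H(\varphi(x))$ as explicit combinations of the four hypergeometric functions whose values at the Darboux pullback $\psi(x)=16\varphi(x)^2$ are tabulated in identities (46)--(49) of \cite[\S 6.1]{Vidunas08}, and then simplifies. You instead propose a guess-and-verify argument: pull back the hypergeometric ODE along $z=\varphi(x)$, check symbolically that the candidate radical expression satisfies the pulled-back equation, and conclude by matching initial data at $x=0$. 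Your method is sound --- the exponents of the pulled-back equation at $x=0$ are $0$ and $1/3$ (resp.\ $0$ and $2/3$), a non-integer difference, so the analytic solutions form a one-dimensional space and matching the constant term $1$ pins down the solution --- and it has the advantage of not needing Vidunas's explicit evaluations as black boxes (only as motivation), at the cost of a symbolic ODE verification that you do not carry out but which is finite and routine. For $K=zG'$ and $J=(G-K)^2$ the paper merely states that they are easy consequences of the first two formul\ae; your chain-rule computation $K(\varphi(x))=\bigl(\tfrac{d}{dx}G(\varphi(x))\bigr)\big/\bigl(\varphi'(x)/\varphi(x)\bigr)$ and the subsequent algebra for $J$ are exactly the intended (and correct) elementary argument, and they check out: $\tfrac{d}{dx}G(\varphi(x))=2(2x-1)^2(4x+1)^{-5/2}$ and $\varphi'/\varphi=(2x-1)^2/\bigl(2x(x+1)(4x+1)\bigr)$ indeed give $4x(x+1)/(4x+1)^{3/2}$.
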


\begin{proof}
The first two equalities are consequences of identities (46)--(49) in~\cite[Section 6.1]{Vidunas08}.
Let us prove the first equality. We start with the contiguity relation~\cite[Eq.~15.2.15]{AS} 
\[{}_2F_1([-1/2,-1/6],[2/3],z) = 2\cdot {}_2F_1([1/2,-1/6],[2/3],z) + (z-1)\cdot {}_2F_1([1/2,5/6],[2/3],z),\]
that we evaluate at $z=\psi(x) = 16\varphi(x)^2$. 
It follows that 
\[G(\varphi(x)) = 2\cdot {}_2F_1([1/2,-1/6],[2/3],\psi(x)) + (\psi(x)-1)\cdot {}_2F_1([1/2,5/6],[2/3],\psi(x)).\]
Identities (46)--(47) in~\cite[Section 6.1]{Vidunas08} 
write
\[{}_2F_1([1/2,-1/6],[2/3],\psi(x)) = (1+4x)^{-1/2},\quad \text{and}\]
\[{}_2F_1([1/2,5/6],[2/3],\psi(x)) = (1+4x)^{3/2}/(1-2x)^2.\]
Therefore, $G(\varphi(x))$ is equal to 
\[G(\varphi(x)) = 2\cdot (1+4x)^{-1/2} + (\psi(x)-1)\cdot \frac{(1+4x)^{3/2}}{(1-2x)^2} = \frac{4x^2+8x+1}{(1+4x)^{3/2}}.\]
To prove the second equality, we start from Euler's formula~\cite[Eq.~15.3.3]{AS} 
\[{}_2F_1([-1/2,1/6],[1/3], z) 
= (1-z)^{2/3} \cdot {}_2F_1([5/6,1/6],[1/3], z)\]
and from the contiguity relation~\cite[Eq.~15.2.25]{AS} 
\[ {}_2F_1([5/6,1/6],[1/3], z) =  \frac{z}{2z-2} \cdot {}_2F_1([5/6,1/6],[4/3], z) + \frac{1}{1-z} {}_2F_1([-1/6,1/6],[1/3], z).
\]
Putting them together, evaluating the result at $z=\psi(x)$ and  
using identities (48)--(49) in~\cite[Section 6.1]{Vidunas08} 
shows that $H(\varphi(x))$ is equal to
\[
 (1-\psi(x))^{2/3} \cdot
\left( \frac{\psi(x)}{2\psi(x)-2} \cdot \frac{(1+4x)^{1/2}(1+2x)^{1/3}}{1+x}
+ 
\frac{1}{1-\psi(x)} \cdot \frac{(1+2x)^{1/3}}{(1+4x)^{1/2}}\right),\]
which further simplifies to $(4x^2+2x+1)/(4x+1)^{3/2}$ for $x\in(0,1/2)$.

The last two equalities are easy consequences of the first two.
\end{proof}

Now, equalities~\eqref{eq:key145}--\eqref{eq:key123} evaluated at $z=\varphi(x)=(x(x+1)^3/(4x+1)^3)^{1/2}$ are easily proven using Lemmas~\ref{V-in-x} and~\ref{G-H-J-K-in-x}.
For instance, equality~\eqref{eq:key145} evaluated at $z=\varphi(x)$ reads:
\[
\frac{2x^2+4x+1}{(4x+1)^{3/2}} =
\frac23 \frac{4x^2+8x+1}{(4x+1)^{3/2}} +
\frac13 \frac{4x^2+2x+1}{(4x+1)^{3/2}} -
\frac12 \frac{4x(x+1)}{(4x+1)^{3/2}}.
\]
Similarly, equality~\eqref{eq:key123} evaluated at $z=\varphi(x)$ reads:
\[
\frac{x}{4x+1}  + \frac{(x+1)(2x+1)}{(4x+1)^{3/2}} =
\frac12 \frac{4x^2+8x+1}{(4x+1)^{3/2}} +
\frac12 \frac{4x^2+2x+1}{(4x+1)^{3/2}} -
\frac14 \frac{1}{4x+1} -
\frac12 \frac{4x(x+1)}{(4x+1)^{3/2}}+
\frac14.
\]

\medskip

The proof of Gessel's formula~\eqref{eq:Gessel's_conjecture} for the number of Gessel excursions is thus completed. Note that incidentally we have proved that the generating series $Q(0,0;z)$ for Gessel excursions is \emph{algebraic}. The next section is devoted to the proof that the \emph{complete\/} generating series of Gessel walks is also algebraic.

\section{Proof of the algebraicity of the GF (Problem \ref{enumi:problem_B})}
\label{sec:proof_B}

\subsection*{Branches of the GFs and algebraicity of $Q(x,y)$ in the variables $x,y$.}
  In this section we  prove a weakened version
of Problem \ref{enumi:problem_B}: we show that $Q(x,y)$ is algebraic
in $x,y$ (we shall prove in the next section that the latter function is algebraic
in $x,y,z$, which is much stronger). This is not necessary for our analysis, but this illustrates that our approach easily yields algebraicity results.

We first propose two proofs of the algebraicity of $Q(0,y)$ as a
function of $y$. The first proof is an immediate application of
property \ref{expression_elliptic_zeta_direct}. The sum of the
residues (i.e., the multiplicative factors in front of the $\zeta$-functions) in the formula \eqref{zeta} of Theorem \ref{thm:Gessel-bi}
is clearly $0$, so that
$r_y(\o)$ is an algebraic function of $\wp_{1,3}(\o)$, by \ref{expression_elliptic_zeta_direct}. Further, by
\ref{principle_transformation}, $\wp_{1,3}(\o)$ is an algebraic
function of $\wp(\o)$, and finally by
\eqref{eq:uniformization_Gessel}, $\wp(\o)$ is algebraic in $y(\o)$.
This eventually implies that $r_y(\o)$ is algebraic in $y(\o)$, or
equivalently that $Q(0,y)$ is algebraic in $y$, see \eqref{rxryrxry}.

The second proof is based on the meromorphic continuation of the GFs on the universal covering, which was recalled in Section \ref{sec:mero}. The restrictions of $r_y(\o)/K(0, y(\o))$ to the half parallelogram
     \begin{equation*}
          \mathcal D_{k,\ell}=\omega_3/2+\omega_1[\ell,\ell+1)+\omega_2(k/2,(k+1)/2]
     \end{equation*}
for $k,\ell \in {\bf Z}$ provide all branches of $Q(0,y)$ on ${\bf C}\setminus([y_1,y_2]\cup[y_3,y_4])$ as follows:
\begin{equation*}
          Q(0,y)=\{r_y(\o)/K(0,y(\o)):
           \o \text{ is the (unique) element of }\mathcal D_{k,\ell} \text{ such that } y(\o)=y\},
     \end{equation*}
see \cite[Section 5.2]{KRIHES} for more details. Due to the $\o_1$-periodicity of $r_y(\o)$ and $y(\o)$ (see \eqref{buzz} and
 \eqref{eq:uniformization_Gessel}, respectively),
 the restrictions of these functions on $\mathcal{D}_{k,\ell}$ do not depend on
 $\ell\in {\bf Z}$, and therefore determine the same branch as on $\mathcal{D}_{k,0}$
 for any $\ell$. Furthermore, due to equation \eqref{xietabad},
 the restrictions of $r_y(\o)/K(0,y(\o))$ on $\mathcal{D}_{-k+1,0}$ and on
 $\mathcal{D}_{k,0}$ lead to the same branches for any $k\in{\bf Z}$. Hence,
 the restrictions of $r_y(\o)/K(0,y(\o))$ on $\mathcal{D}_{k,0}$ with $k\geq 1$
 provide all different branches of this function. In addition, Lemma \ref{lem:elliptic} says that
 $r_y$ is $3\o_2$-periodic. This fact yields that $Q(0,y)$ has (at most) six branches,
 and is thus algebraic.

     An analogous statement holds for (the restrictions of) the function
     $r_x(\o)/K(x(\o),0)$ and then for $Q(x,0)$.
  Using the functional equation \eqref{functional_equation}, we conclude that $Q(x,y)$ is algebraic
  in the two variables $x,y$.

In the next section, we refine the previous statement,
and prove that $Q(x,y)$ is algebraic in $x,y,z$ (Problem \ref{enumi:problem_B}).

\subsection*{Proof of the algebraicity of the complete GF}

  We start by proving the algebraicity of $Q(0,y)$ as a function of
  $y,z$. We consider the representation of
  $r_y(\o)$ given in Theorem \ref{thm:Gessel-bi} and apply eight times the addition theorem
\ref{addition_theorem} for $\zeta$-functions, namely
\begin{equation*}
     \zeta_{1,3}(\o-k\o_2/8) = \zeta_{1,3}(\o)-\zeta_{1,3}(k\o_2/8)+\frac{1}{2}\frac{\wp_{1,3}'(\o)+\wp_{1,3}'(k\o_2/8)}{\wp_{1,3}(\o)-\wp_{1,3}(k\o_2/8)},
\end{equation*}
for suitable values of $k\in{\bf Z}$ that can be deduced from \eqref{zeta}:
\begin{equation}
\label{eq:values_k}
     k\in\{1,3,11,13,15,17,21,23\}.
\end{equation}
We then make the weighted sum of the eight identities above (corresponding to the appropriate values of $k$ in \eqref{zeta}); this way, we obtain
\begin{equation*}
     r_y(\o)=U_1(\o)+U_2+U_3(\o),
\end{equation*}
where $U_1(\o)$ is the weighted sum of the eight functions $\zeta_{1,3}(\o)$, $U_2$ is the sum of $c$ and of the weighted sum of the eight quantities $-\zeta_{1,3}(k\o_2/8)$, and $U_3(\o)$ is the weighted sum of the eight quantities
\begin{equation}
\label{eq:wseq}
     \frac{1}{2}\frac{\wp_{1,3}'(\o)+\wp_{1,3}'(k\o_2/8)}{\wp_{1,3}(\o)-\wp_{1,3}(k\o_2/8)}.
\end{equation}

Since the sum of the residues in the formula \eqref{zeta} equals $0$,
the coefficient in front of $\zeta_{1,3}(\o)$ is $0$, so that $U_1(\o)$ is identically zero.
To prove that $U_2$ is algebraic in $z$, it suffices to use similar
arguments as we did to prove that $Q(0,0)$ is algebraic (we group together different $\zeta$-functions and we use standard identities as the Frobenius-Stickelberger equality \ref{Frobenius} or the addition formula for the $\zeta$-function \ref{addition_theorem}, see Section \ref{sec:proof_A}); we do not repeat the arguments here.
Finally, we show that $U_3(\o)$ is algebraic in $y(\o)$ over the
field of algebraic functions in $z$. In other words, we show that
there exists a non-zero bivariate polynomial $P$ such that
\begin{equation*}
P(U_3(\o),y(\o))=0,
\end{equation*}
where the coefficients of $P$ are algebraic functions in
$z$. This is enough to conclude the algebraicity of $Q(0,y)$ as a
function of $y$ and $z$.

To prove the latter fact, we shall prove that each term \eqref{eq:wseq} for
$k$ as in \eqref{eq:values_k} satisfies the property above (with different
polynomials $P$, of course). First, Lemma \ref{lem:algebraic_8} below implies
that $\wp_{1,3}(k\o_2/8)$ and $\wp_{1,3}'(k\o_2/8)$ are both algebraic in $z$.
Further, it follows from \ref{principle_transformation} that the function
$\wp_{1,3}(\o)$ is algebraic in $\wp(\o)$ over the field of algebraic
functions in $z$. It is thus also algebraic (over the field of algebraic
functions in $z$) in $y(\o)$, thanks to \eqref{eq:uniformization_Gessel}. The
same property holds for $\wp_{1,3}'(\o)$: this comes from the differential
equation \eqref{eq:diff_eq} satisfied by the Weierstrass elliptic functions.

The proof of the algebraicity of $Q(x,0)$ as a function of $x$ and $z$ is
analogous. With equation \eqref{functional_equation} the algebraicity of
$Q(x,y)$ as a function of $x,y,z$ is proved.\hfill$\square$

\smallskip To conclude this section it remains to prove the following lemma.
\begin{lem}
\label{lem:algebraic_8}
     For any $k\in{\bf Z}$ and any $\ell\in{\bf N}$, $\wp^{(\ell)}(k\o_2/8)$ and $\wp_{1,3}^{(\ell)}(k\o_2/8)$ are (infinite or) algebraic functions of $z$.
\end{lem}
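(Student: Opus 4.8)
The plan is to show that every quantity in the statement is algebraic over the field of algebraic functions of $z$, by combining the differential equation \eqref{eq:diff_eq}, the explicit algebraicity of the invariants (Lemmas \ref{lemma:invariants_o_1_o_2-Gessel} and \ref{lem:three_important_values}), and the bisection/addition formulae for the Weierstrass functions. I would treat $\wp$ first, and then deduce the statement for $\wp_{1,3}$ from the transformation formula \eqref{xxxy}.

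First I would reduce the claim about derivatives to the values of $\wp$ and $\wp'$. Differentiating \eqref{eq:diff_eq} repeatedly shows that every derivative $\wp^{(\ell)}$ is a polynomial in $\wp$ and $\wp'$ whose coefficients lie in ${\bf Q}[g_2,g_3]$ (for instance $\wp''=6\wp^2-g_2/2$, $\wp'''=12\wp\wp'$, and so on); since $g_2,g_3$ are algebraic in $z$ by Lemma \ref{lemma:invariants_o_1_o_2-Gessel}, it suffices to prove that $\wp(k\o_2/8)$ and $\wp'(k\o_2/8)$ are algebraic in $z$ (or infinite). Moreover \eqref{eq:diff_eq} itself exhibits $\wp'(k\o_2/8)$ as a square root of an algebraic function of $\wp(k\o_2/8)$, so everything reduces to the algebraicity of $\wp(k\o_2/8)$.

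To establish that, I would start from the explicit value $\wp(\o_2/4)=(1+4z^2)/3$ recalled in the proof of Lemma \ref{lem:three_formulae}, which is rational---hence algebraic---in $z$. Applying the bisection formula \ref{bisection} once produces a polynomial equation with coefficients in ${\bf Q}[g_2,g_3]$ satisfied by $\wp(\o_2/8)$, so $\wp(\o_2/8)$ is algebraic in $z$, and then $\wp'(\o_2/8)$ is algebraic through \eqref{eq:diff_eq}. For a general $k$, iterating the addition theorem \ref{addition_theorem} expresses $\wp(k\o_2/8)$ as a rational function of $\wp(\o_2/8)$ and $\wp'(\o_2/8)$ with coefficients in ${\bf Q}[g_2,g_3]$; hence it is algebraic in $z$, being infinite precisely when $k\o_2/8$ is a point of the lattice $\o_1{\bf Z}+\o_2{\bf Z}$, i.e.\ when $8\mid k$.

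Finally, for $\wp_{1,3}$ I would invoke the transformation formula \eqref{xxxy} (a consequence of property \ref{principle_transformation}), which shows that for any $\o$ the value $\wp_{1,3}(\o)$ is a root of a cubic whose coefficients are algebraic in $z$ and in $\wp(\o)$; indeed the invariants $g_2^{1,3},g_3^{1,3}$ and the auxiliary quantity $R$ appearing there are all algebraic in $z$ by Lemma \ref{lem:three_important_values}. Taking $\o=k\o_2/8$ and using the algebraicity of $\wp(k\o_2/8)$ just proved, I conclude that $\wp_{1,3}(k\o_2/8)$ is algebraic in $z$, and the derivatives $\wp_{1,3}^{(\ell)}(k\o_2/8)$ are then handled exactly as before, via the differential equation satisfied by $\wp_{1,3}$. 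I do not expect a serious obstacle here: the only mild subtleties are the bookkeeping of the base case and of the poles (the values being infinite exactly at lattice points of the relevant period lattice, that is for $8\mid k$ in the case of $\wp$ and for $24\mid k$ in the case of $\wp_{1,3}$), together with the observation that routing $\wp_{1,3}$ through the transformation formula \eqref{xxxy} avoids a direct bisection argument, whose half-periods would only reach the sublattice $3\o_2{\bf Z}$ and hence the values $\wp_{1,3}(k\o_2/8)$ with $3\mid k$.
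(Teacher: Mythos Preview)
Your approach is essentially the same as the paper's: reduce derivatives to $\ell=0$ via the differential equation \eqref{eq:diff_eq}, obtain $\wp(\o_2/8)$ from $\wp(\o_2/4)$ by bisection and reach other multiples by addition, and then pass to $\wp_{1,3}$ through the cubic relation coming from property \ref{principle_transformation}. There is one small gap in your last step: when $8\mid k$ but $24\nmid k$ (e.g.\ $k=8$), the value $\wp(k\o_2/8)$ is infinite while $\wp_{1,3}(k\o_2/8)$ is finite, so the cubic \eqref{xxxy} cannot be used as stated; the paper handles this residual case separately by invoking $\wp_{1,3}(\o_2)=R/6$ from Lemma~\ref{lem:three_important_values} (equivalently, one can let $\wp(\o)\to\infty$ in \eqref{xxxy} after dividing by $\wp(\o)$, which degenerates to $(X-R/6)^2=0$).
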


\begin{proof}
First, for any $\ell\in{\bf N}$ and $k\in 8{\bf Z}$ (resp.\ $k\in 24{\bf Z}$), $\wp^{(\ell)}(k\o_2/8)$ (resp.\ $\wp_{1,3}^{(\ell)}(k\o_2/8)$) is infinite. For other values of $k$, they are finite. By periodicity and parity, it is enough to prove the algebraicity for $k\in\{1,\ldots ,4\}$ (resp.\ $k\in\{1,\ldots ,12\}$).

It is important to notice that it suffices to prove the result for $\ell=0$. Indeed, all the invariants $g_2$, $g_3$, $g_2^{1,3}$ and $g_3^{1,3}$ are algebraic functions of $z$ (see Lemmas \ref{lemma:invariants_o_1_o_2-Gessel} and \ref{lem:three_important_values}), so that using inductively the differential equation \eqref{eq:diff_eq}, we obtain the algebraicity for values of $\ell\geq 1$ from the algebraicity for $\ell=0$.

We first consider $\wp=\wp^{(0)}$. It is demonstrated in \cite[Lemma 19]{KRG} that
$\wp(k\o_2/8)$ is algebraic for $k=2$ and $k=4$. For $k=1$
this follows from the bisection formula \ref{bisection} and from the case $k=2$
 (note that $\wp(\o_1/2)$, $\wp(\o_2/2)$ and $\wp((\o_1+\o_2)/2)$ are algebraic in $z$, see \cite[Lemma 12]{KRG}).
 For $k=3$, this is a consequence of the addition formula \ref{addition_theorem}.

We now deal with $\wp_{1,3}$. Let $k\in\{1,\ldots ,12\}$. Using
\eqref{eq:after_two_at}, we easily derive that $\wp_{1,3}(\o_0)$ is
algebraic in $z$ as soon as $\wp(\o_0)$ is algebraic in $z$: indeed,
in \eqref{eq:after_two_at} the functions $\wp_{1,3}(\o_2)$ and $\wp_{1,3}'(\o_2)$
are algebraic in $z$, as a consequence of Lemma
\ref{lem:three_important_values}. This remark applies to all $\o_0=k\o_2/8$,
except for $k=8$, since then $\wp(k\o_2/8)=\infty$. In fact, for $k=8$
the situation is also simple, as $\wp_{1,3}(k\o_2/8)=R/6$ (see Lemma
\ref{lem:three_important_values}) is already known to be algebraic.
\end{proof}

\section{Conclusion}
\label{sec:conclusion}

\subsection*{Application of our results to other end points}
In this article we have presented the first human proofs of Gessel conjecture (Problem \ref{enumi:problem_A}) and of the algebraicity of the complete GF counting Gessel walks (Problem \ref{enumi:problem_B}). We have deduced the closed-form expression \eqref{eq:Gessel's_conjecture} of the numbers of walks $q(0,0;n)$ from a new algebraic expression of the GF $\sum_{n\geq 0} q(0,0;n)z^{n}$.

With a very similar analysis, we could obtain an expression for the series $\sum_{n\geq 0} q(i,j;n)z^{n}$, for any fixed couple $(i,j)\in{\bf N}^2$. Let us illustrate this fact with the example $(i,j)=(0,j)$, $j\geq0$. Let
\begin{equation*}
     g_j(z) = \sum_{n\geq 0} q(0,j;2n)z^{2n}
\end{equation*}
be the function counting walks ending at the point $(0,j)$ of the vertical axis. We obviously have
\begin{equation*}
     Q(0,y) = \sum_{j\geq 0} y^j g_j(z).
\end{equation*}
In particular, up to a constant factor $j!$, the functions $g_j(z)$ are exactly the successive derivatives of $Q(0,y)$
w.r.t.\ the variable $y$ evaluated at $y=0$. First, $g_0(z)=Q(0,0)$.
Further, one has $r_y(\o)=z(y(\o)+1)Q(0,y(\o))$.
 Differentiating w.r.t.\ $\o$ and evaluating at $\o_0^y$ (which is such that $y(\o_0^y)=0$), we find
\begin{equation*}
     g_1(z) = \frac{r_y'(\o_0^y)}{zy'(\o_0^y)}-Q(0,0).
\end{equation*}
All quantities above can be computed, and a similar analysis as in Section \ref{sec:proof_A} could lead to a closed-form expression for the GF of the numbers of walks $q(0,1;2n)$, $n\geq0$. Similarly, one could compute $g_2(z)$, $g_3(z)$, etc.

\subsection*{New Gessel conjectures}
For any $j\geq 0$, define
\begin{equation*}
     f_j(z) = (-1)^{j}(2j+1)z^{j}+2z^{j+1}\sum_{n\geq 0}q(0,j;2n)z^{n}.
\end{equation*}
With this notation, the closed-form expression \eqref{eq:Gessel's_conjecture} for the $q(0,0;2n)$ is equivalent to (see \cite{BK2,Gessel13})
\begin{equation}
\label{eq:Gessel_equiv}
     f_0\left(z\frac{(1+z)^3}{(1+4z)^3}\right)=\frac{1+8z+4z^2}{(1+4z)^{3/2}}.
\end{equation}
On March 2013, Ira Gessel \cite{Gessel13} proposed the following new conjectures: for any $j\geq 1$,
\begin{equation}
\label{eq:new_conjectures}
     f_j\left(z\frac{(1+z)^3}{(1+4z)^3}\right)=(-z)^j\frac{p_j(z)}{(1+4z)^{3/2+3j}},
\end{equation}
where $p_j(z)$ is a polynomial of degree $3j+2$ with positive coefficients (due to \eqref{eq:Gessel_equiv}, these new conjectures generalize the original one).

We shall not prove these conjectures in the present article. However, we do think that following the method sketched in the first part of Section \ref{sec:conclusion}, it could be possible to prove them, at least for small values of $j\geq 0$.

\appendix

\section{Some properties of elliptic functions}
\label{appendix-elliptic}

In this appendix, we bring together useful results on the Weierstrass functions~$\wp$ and~$\zeta$.
First, they are defined as follows for all $\omega\in{\bf C}$:
 \begin{align}
 \label{eq:first_time_expansion_wp}
 \wp(\o) = \frac{1}{\o^2}+\sum_{(\overline{n},\widehat{n})\in{\bf Z}^2\setminus \{(0,0)\}}\left( \frac{1}{(\o-(\overline{n}\,\overline{\o}+\widehat{n}\widehat{\o}))^2}-\frac{1}{(\overline{n}\,\overline{\o}+\widehat{n}\widehat{\o})^2}\right),
\\
      \zeta(\o) = \frac{1}{\o}+\sum_{(\overline{n},\widehat{n})\in{\bf Z}^2\setminus \{(0,0)\}}\left( \frac{1}{\o-(\overline{n}\, \overline{\o}+\widehat{n}\widehat{\o})}+\frac{1}{\overline{n}\,\overline{\o}+\widehat{n}\widehat{\o}}+\frac{\o}{(\overline{n}\,\overline{\o}+\widehat{n}\widehat{\o})^2}\right).
 \end{align}
\
The next lemma displays a collection of useful 
properties of the functions $\wp$ and $\zeta$. These properties are very classical; they can be found in \cite{AS,JS,WW} (see also the references in \cite{KRnew}).

\begin{lem}
\label{lemma_properties_wp} Let ${\zeta}$ and $\wp$ be the Weierstrass functions with periods $\overline{\omega},\widehat{\omega}$. Then:
\begin{enumerate}[label={\rm (P\arabic{*})},ref={\rm (P\arabic{*})}]
\item\label{expression_zeta}
	$\zeta$ (resp.\ $\wp$) has a unique pole in the fundamental parallelogram $[0,\overline{\omega})+[0,\widehat{\omega})$. It is of order $1$ (resp.\ $2$), at $0$, and has residue $1$ (resp.\ $0$, and principal part $1/\o^2$).
\item\label{elliptic_poles_0}An elliptic function with no poles in the fundamental parallelogram
$[0,\overline{\omega})+[0,\widehat{\omega})$ is constant.
\item\label{expansion_0}In the neighborhood of $0$, the function $\wp(\o)$ admits the expansion
\begin{equation*}
     \wp(\o) = \frac{1}{\o^2}+\frac{g_2}{20}\o^2+\frac{g_3}{28}\o^4+O(\o^6).
\end{equation*}
\item\label{addition_theorem}We have the addition theorems, for any $\omega,\widetilde{\omega}\in{\bf C}$:
     \begin{equation*}
          \zeta({\omega}+\widetilde{\omega})=\zeta({\omega})+\zeta(\widetilde{\omega})+\frac{1}{2}\frac{\wp'({\omega})-
          \wp'(\widetilde{\omega})}{\wp({\omega})-
          \wp(\widetilde{\omega})},
     \end{equation*}
     and
          \begin{equation*}
          \wp({\omega}+\widetilde{\omega})=-\wp({\omega})-\wp(\widetilde{\omega})+\frac{1}{4}\left(\frac{\wp'({\omega})-
          \wp'(\widetilde{\omega})}{\wp({\omega})-
          \wp(\widetilde{\omega})}\right)^2.
     \end{equation*}
\item\label{expression_elliptic_zeta_direct}For given $\widetilde{\o}_1,\ldots,\widetilde{\o}_p\in{\bf C}$, define
     \begin{equation}
     \label{eq:expression_elliptic_zeta}
          f(\o)=c+\sum_{1\leq \ell\leq p}r_\ell\zeta(\o-\widetilde{\o}_\ell),\qquad  \forall \omega\in{\bf C}.
     \end{equation}
The function $f$ above is elliptic if and only if $\sum_{1\leq \ell\leq p}r_\ell=0$.
\item\label{expression_elliptic_zeta}Let $f$ be an elliptic function with periods $\overline{\omega},\widehat{\omega}$ such that in the fundamental parallelogram $[0,\overline{\omega})+[0,\widehat{\omega})$, $f$ has only poles of order $1$, at $\widetilde{\o}_1,\ldots,\widetilde{\o}_p$, with residues $r_1,\ldots,r_p$, respectively. Then there exists a constant $c$ such that \eqref{eq:expression_elliptic_zeta} holds.
\item \label{principle_transformation} Let $p$ be some positive integer. The Weierstrass elliptic function with periods $\overline{\omega},\widehat{\omega}/p$ can be written in terms of ${\wp}$ as
     \begin{equation*}
          {\wp}(\omega)+\sum_{1\leq \ell\leq p-1}[{\wp}(\omega+\ell\widehat{\omega}/p)-{\wp}(\ell\widehat{\omega}/p)],\qquad  \forall \omega\in{\bf C}.
     \end{equation*}
\item\label{half_period_translated_zeta}The function $\zeta$ is quasi-periodic, in the sense that
\begin{equation*}
     \zeta(\o+\overline{\o}) = \zeta(\o)+2\zeta(\overline{\o}/2),\qquad  \zeta(\o+\widehat{\o}) = \zeta(\o)+2\zeta(\widehat{\o}/2),\qquad \forall \omega\in{\bf C}.
\end{equation*}
\item\label{Frobenius}If $\alpha+\beta+\gamma=0$ then
\begin{equation*}
     (\zeta(\alpha)+\zeta(\beta)+\zeta(\gamma))^2=\wp(\alpha)+\wp(\beta)+\wp(\gamma).
\end{equation*}
\item\label{bisection}We have the bisection formula:
\begin{align*}
     \wp(\o/2)=\wp(\o)&+\sqrt{(\wp(\o)-\wp(\o_1/2))(\wp(\o)-\wp(\o_2/2))}\\
     &+\sqrt{(\wp(\o)-\wp(\o_1/2))(\wp(\o)-\wp((\o_1+\o_2)/2))}\\
     &+\sqrt{(\wp(\o)-\wp(\o_2/2))(\wp(\o)-\wp((\o_1+\o_2)/2))},\qquad  \forall \omega\in{\bf C}.
\end{align*}
\end{enumerate}
\end{lem}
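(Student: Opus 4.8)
The plan is to establish the ten listed properties in a logical order, each building on its predecessors; all of them are classical (see \cite{AS,JS,WW}), so the work consists only in assembling the standard arguments. Everything rests on three elementary facts read directly from the defining series \eqref{eq:first_time_expansion_wp}: $\wp$ is even and $\zeta$ is odd, $\zeta'=-\wp$, and both series converge locally uniformly off the lattice $\overline\o{\bf Z}+\widehat\o{\bf Z}$. Since $0$ is the unique lattice point in $[0,\overline\o)+[0,\widehat\o)$, property \ref{expression_zeta} is immediate: the $(0,0)$-term contributes the principal part $1/\o$ to $\zeta$ (residue $1$) and $1/\o^2$ to $\wp$ (no residue), while all other terms are holomorphic at $0$. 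Property \ref{elliptic_poles_0} is Liouville's theorem, a pole-free elliptic function being entire and bounded (by its values on the compact fundamental parallelogram), hence constant. Property \ref{expansion_0} follows by expanding each summand $1/(\o-\lambda)^2-1/\lambda^2$ in powers of $\o$ and identifying the Eisenstein sums $\sum_\lambda\lambda^{-4}=g_2/60$, $\sum_\lambda\lambda^{-6}=g_3/140$, which give the coefficients $g_2/20$ and $g_3/28$.

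Next I would prove the addition theorems \ref{addition_theorem}, the workhorse of the lemma. For $\wp$ I would use that $(\wp(\o),\wp'(\o))$ runs on the cubic $Y^2=4X^3-g_2X-g_3$ (the differential equation, itself obtained by matching the principal parts of $\wp'^2$ and $4\wp^3-g_2\wp-g_3$ and invoking \ref{elliptic_poles_0}) and that three arguments summing to $0$ give three collinear points; Vieta's relation for the cubic cut by the secant through $\o$ and $\widetilde\o$ then yields the $\wp$-formula. The $\zeta$-formula follows because the difference of its two sides is elliptic in $\o$ (the quasi-periods cancel), with only removable poles at $\o=0$ and $\o=-\widetilde\o$ after a residue check, hence constant by \ref{elliptic_poles_0}, the constant vanishing by oddness. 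Two properties then drop out at once: \ref{Frobenius} by setting $\gamma=-(\alpha+\beta)$, writing $\zeta(\alpha)+\zeta(\beta)+\zeta(\gamma)=-\tfrac12(\wp'(\alpha)-\wp'(\beta))/(\wp(\alpha)-\wp(\beta))$ and squaring, then comparing with the $\wp$-formula; and \ref{half_period_translated_zeta}, since $\zeta(\o+\overline\o)-\zeta(\o)$ has derivative $-\wp(\o+\overline\o)+\wp(\o)=0$, hence is a constant equal, by evaluation at $\o=-\overline\o/2$ and oddness, to $2\zeta(\overline\o/2)$.

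The representation statements are then routine. For \ref{expression_elliptic_zeta_direct} I apply \ref{half_period_translated_zeta} termwise to get $f(\o+\overline\o)-f(\o)=2\zeta(\overline\o/2)\sum_\ell r_\ell$ and likewise for $\widehat\o$; as the half quasi-periods $\zeta(\overline\o/2),\zeta(\widehat\o/2)$ are nonzero (Legendre relation), ellipticity is equivalent to $\sum_\ell r_\ell=0$. For \ref{expression_elliptic_zeta} I use that the residues of an elliptic function over a fundamental parallelogram sum to $0$, so $\sum_\ell r_\ell=0$ and $g(\o)=\sum_\ell r_\ell\zeta(\o-\widetilde\o_\ell)$ is elliptic by \ref{expression_elliptic_zeta_direct} with the same simple poles and residues as $f$ (by \ref{expression_zeta}); thus $f-g$ is pole-free elliptic, hence constant by \ref{elliptic_poles_0}. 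For \ref{principle_transformation} I let $F(\o)$ be the right-hand side and check directly its invariance under $\o\mapsto\o+\overline\o$ and under $\o\mapsto\o+\widehat\o/p$ (the latter permutes the shifted $\wp$-terms cyclically modulo the full period $\widehat\o$); in the finer fundamental parallelogram $F$ has a single double pole at $0$ with principal part $1/\o^2$ and, thanks to the subtracted constants $-\wp(\ell\widehat\o/p)$, vanishing $\o^0$-coefficient, exactly matching the Weierstrass function of periods $\overline\o,\widehat\o/p$; their difference is pole-free elliptic, hence a constant that equals $0$ by comparison at the origin.

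The hard part will be the bisection formula \ref{bisection}, because of the square roots and their branch choices. The plan is to begin with the duplication identity $\wp(\o)+2\wp(\o/2)=\tfrac14(\wp''(\o/2)/\wp'(\o/2))^2$ (the $\widetilde\o\to\o$ limit of \ref{addition_theorem}), which exhibits $\wp(\o)$ as a degree-$4$ rational function of $\wp(\o/2)$, so that $\wp(\o/2)$ is one of the four half-period translates $\wp(\o/2+\tau)$, $\tau\in\{0,\o_1/2,\o_2/2,(\o_1+\o_2)/2\}$. To extract the explicit radical form I would use the half-argument identities coming from the factorisation $\wp(u)-\wp(v)=-\sigma(u+v)\sigma(u-v)/(\sigma(u)^2\sigma(v)^2)$, taken with $v$ one of the three half-periods $\o_1/2,\o_2/2,(\o_1+\o_2)/2$ whose $\wp$-values are $e_1,e_2,e_3$; this turns each $\sqrt{(\wp(\o)-e_j)(\wp(\o)-e_k)}$ into a rational expression in $\wp(\o/2)$, and summing the three products and simplifying with $e_1+e_2+e_3=0$ collapses to $\wp(\o/2)-\wp(\o)$, giving the stated identity. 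The genuinely delicate point is the coherent choice of the three square-root branches so that the sum yields $+\wp(\o/2)$ rather than a translate; I would fix this near a reference point, for instance as $\o\to0$ where $\wp(\o/2)\sim4/\o^2$ and $\wp(\o)\sim1/\o^2$ force the three radicals to total $\sim3/\o^2$, and then propagate the choice by analytic continuation. The remaining steps are bookkeeping with the identities already established.
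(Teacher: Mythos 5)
Your proposal is correct, but you should know that the paper itself contains no proof of this lemma: it presents the ten properties as classical facts and simply refers the reader to \cite{AS,JS,WW} (and the references in \cite{KRnew}). Any proof you supply is therefore by construction a different route, and yours is the standard textbook assembly, organized in a sound dependency order: \ref{expression_zeta}--\ref{expansion_0} read off the defining series, Liouville's theorem \ref{elliptic_poles_0} plus pole-matching serve as the recurring engine for the addition theorems, the representation statements \ref{expression_elliptic_zeta_direct}--\ref{principle_transformation}, and the Landen-type identity \ref{principle_transformation}; the corollaries \ref{Frobenius} and \ref{half_period_translated_zeta} drop out as you say. Two small points deserve attention. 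First, in \ref{expression_elliptic_zeta_direct} the Legendre relation guarantees only that $\zeta(\overline{\o}/2)$ and $\zeta(\widehat{\o}/2)$ are not \emph{both} zero, not that each is individually nonzero as you assert; your argument survives unchanged, because ellipticity forces both increments $2\zeta(\overline{\o}/2)\sum_\ell r_\ell$ and $2\zeta(\widehat{\o}/2)\sum_\ell r_\ell$ to vanish, whence $\sum_\ell r_\ell=0$. Second, you rightly single out the bisection formula \ref{bisection} as the only genuinely delicate item on account of the square-root branches; it is worth adding that in the paper's application the lattice is rectangular ($\o_1\in i{\bf R}$, $\o_2\in{\bf R}$, with $z\in(0,1/4)$), so the three half-period values of $\wp$ are real and ordered, the radicands are positive on the real segments where the formula is invoked (e.g.\ in the proof of Lemma \ref{lem:lemKRG}), and the positive square roots are canonical there; your normalization as $\o\to0$, where $\wp(\o/2)\sim 4/\o^2$ and $\wp(\o)\sim 1/\o^2$ force the three radicals to total $\sim 3/\o^2$, then pins the branches consistently by analytic continuation. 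What your route buys is a self-contained appendix at the cost of length; what the paper's citation buys is brevity, entirely defensible since every item is in Whittaker--Watson.
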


\section*{Acknowledgments}

We thank the two anonymous referees for their useful and insightful comments and suggestions, which led us to improve the presentation of the paper.

\end{document}